\newtheorem{assumption}{Assumption}
\newtheorem{remark}{Remark}
\newtheorem{lemma}{Lemma}
\newtheorem{proposition}{Proposition}
\newtheorem{corollary}{Corollary}
\newtheorem{theorem}{Theorem}
\newcommand{\R}{\mathbb{R}}
\newcommand{\N}{\mathbb{N}}
\newcommand{\M}{\mathbb{M}}
\newcommand{\LL}{\mathcal{L}}
\newcommand{\Nn}{\mathcal{N}}
\newcommand{\dist}{\mathrm{dist}}
\newcommand{\dif}{\mathrm{d}}
\renewcommand{\a}{\textsc{a}}
\newcommand{\Exp}{\mathrm{Exp}}
\newcommand{\id}{\mathrm{id}}
\newcommand{\I}{\mathcal{I}}
\newcommand{\diag}{\operatorname{diag}}
\newcommand{\mgm}{\operatorname{MGM}}
\newcommand{\mgmtrunc}{\operatorname{MGMTRUNC}}
\newcommand{\tgm}{\operatorname{TGM}}
\newcommand{\smoothingop}[1]{\mathcal{S}_{#1}}
\newcommand{\smoothingmat}[1]{\vec{S}_{#1 }}
\newcommand{\smoothingmatpert}[1]{\vec{\check{S}}_{#1}}
\newcommand{\smoothingoppert}[1]{\mathcal{\check{S}}_{#1}}
\newcommand{\smoothingmattrunc}[2]{\check{\vec{S}}_{#1;#2}}
\renewcommand{\vec}[1]{\boldsymbol{\mathbf{#1}}}
\newcommand{\twogridmat}[1]{\vec{C}_{\text{TG}_{#1}}}
\newcommand{\twogridop}[1]{\mathcal{C}_{\text{TG}_{#1}}}
\newcommand{\twogridoppert}[1]{\mathcal{\check{C}}_{\text{TG}_{#1}}}
\newcommand{\twogridmatpert}[1]{\vec{\check{C}}_{\text{TG}_{#1}}}
\newcommand{\multigridmat}[1]{\vec{C}_{#1}}
\newcommand{\multigridmatpert}[1]{\vec{\check{C}}_{#1}}
\newcommand{\multigridop}[2]{\mathcal{C}_{#1}}
\newcommand{\multigridoppert}[1]{\mathcal{\check{C}}_{#1}}
\newcommand{\stiffness}[1]{\vec{A}_{#1}}
\newcommand{\stiffnesspert}[1]{\vec{\check{A}}_{#1}}
\newcommand{\stiffnessprecon}[1]{\vec{B}_{#1}}
\newcommand{\preconpert}[1]{\vec{\check{B}}_{#1}}
\newcommand{\stiffnesstrunc}[2]{\check{\vec{A}}_{#1;#2}}
\newcommand{\prolmat}[1]{\vec{p}_{#1}}
\newcommand{\prolmattrunc}[2]{\check{\vec{p}}_{#1;#2}}
\newcommand{\prolop}[1]{\mathcal{I}_{#1-1}^{(#1)}}
\newcommand{\prolpert}[1]{\check{\vec{p}}_{#1}}
\newcommand{\respert}[1]{\check{\vec{r}}_{#1}}
\newcommand{\resmat}[1]{\vec{r}_{#1}}
\newcommand{\resmattrunc}[2]{\check{\vec{r}}_{#1;#2}}
\newcommand{\resop}[1]{\mathcal{I}_{#1}^{(#1-1)}}
\newcommand{\change}[1]{\textcolor{black}{ #1}}
\newcommand{\crchange}[1]{\textcolor{black}{ #1}}
\newcommand{\CZ}{C_{\mathrm{zeros}}}
\newcommand{\Cen}{C_{\mathrm{EN}}}
\newcommand{\Cpw}{C_{\mathrm{PW}}}
\newcommand{\Cstiff}{C_{\mathrm{stiff}}}
\newcommand{\Cdiag}{C_{\mathrm{diag}}}
\newcommand{\CB}{C_{\mathrm{Bernstein}}}
\newcommand{\CH}{C_{\mathrm{H\ddot older}}}
\newcommand{\Hawaii}{Hawai\kern.05em`\kern.05em\relax i }
\def\ps@pprintTitle{%
  \let\@oddhead\@empty
  \let\@evenhead\@empty
  \let\@oddfoot\@empty
  \let\@evenfoot\@oddfoot
}
\begin{document}

\begin{frontmatter}




\title{Kernel Multigrid on Manifolds}


\author[label1]{Thomas Hangelbroek
\fnref{fnThomas}}
\ead{hangelbr@math.hawaii.edu}
\address[label1]{University of Hawai`i at M\=anoa}

\author[label2]{Christian Rieger\corref{cor1}%
}
\ead{riegerc@mathematik.uni-marburg.de}
\address[label2]{Philipps-Universit\"at Marburg}

\cortext[cor1]{Corresponding author}
\fntext[fnThomas]{Thomas Hangelbroek's research was supported 
by Grant DMS-2010051 from the National Science Foundation.}


\begin{abstract}
Kernel methods for solving partial differential equations 
on surfaces have the advantage that those methods work intrinsically 
on the surface and yield high approximation rates if the solution 
to the partial differential equation is smooth enough.
Localized Lagrange bases have proven to alleviate
the computational complexity of usual kernel methods to some extent.
The efficient numerical solution of the resulting linear systems of equations 
has not been addressed in the literature so far. 
In this article we apply the framework of geometric 
multigrid method with a $\tau\ge 2$-cycle 
to this particular setting. 
Moreover, we show that the resulting linear algebra can be made more efficiently 
by using the Lagrange function decay again. 
The convergence rates are obtained by a rigorous analysis.
The presented version of a multigrid method provably works 
on quasi-uniform point clouds on the surface 
and hence does not require a grid-structure. 
Moreover, we can show that the computational cost to solve the linear system scales log-linear 
in the degrees of freedom.
\end{abstract}

\begin{keyword}
Geometric multigrid \sep partial differential equations on manifolds \sep 
kernel-based Galerkin methods \sep localized Lagrange basis



\MSC{65F10 \sep 65Y20 \sep 65M12 \sep 65M15 \sep 65M55 \sep 65M60} 

\end{keyword}

\end{frontmatter}



\section{Introduction}
The numerical solution to partial differential equations,
often time dependent, 
on curved geometries is crucial to many real-world applications. 
Of the many available numerical methods (see for instance \cite{dziuk_elliott_2013} for an overview using finite elements, 
or \cite{Shankar:etal:2020}  \cite{LeGia:etal:2012} for alternative mesh-free methods), we focus on mesh-free kernel-based Galerkin methods (see also \cite{legia:2004}, \cite{wendland:kuehnemund:2020}, \cite{sun:ling:2022}), which have a number of  merits, including delivering high approximation orders for smooth data, providing smooth solutions, and 
working coordinate-free and without the need for rigid  underlying geometric structures like meshes and regular grids.

Let $\M$ denote in the following a compact, {$d$-dimensional}
 Riemannian manifold without boundary. 
We will consider,
as a  spatial operator,  
a generic second order linear elliptic differential operator $\LL$ with trivial nullspace. 
Such operators occur for instance in the numerical solution of the heat equation using implicit time-stepping methods, see e.g. \cite{KNWW}.
In conventional kernel-based Galerkin methods, a grid $\Xi\subset \M$ is considered and a stiffness matrix $\stiffness{\Xi}$,
which  represents  $\LL$ on a finite dimensional kernel space expressed endowed with  some fixed basis,
 is assembled. 
This leads to the equation 
\begin{equation}\label{eq:stiffness_equation}
\stiffness{\Xi} \vec{u}=\vec{b},
\end{equation}
 (see (\ref{eq:stiffnessmatdef}) for a precise definition).

In this paper we address the problem of reducing the computational costs of using kernels without spoiling their analytic advantages.
Here, we mostly follow \cite{KNWW, NRW} and use the kernel-based Lagrange basis. 
This particular basis, though not locally supported, has very strong decay properties which allows to localize computations. 
The almost-local support already alleviates the problem of densely populated matrices as usually encountered in kernel methods.
Concretely, the full system in (\ref{eq:stiffness_equation}) can be well-approximated by a sparse 
matrix using the decay of
 the local Lagrange basis.
 Despite this,  the condition number $\operatorname{cond}_2 (\stiffness{\Xi}) \sim N^{2/d} $ 
 grows with the problem size $N:=|\Xi|$. 
Thus, even if compressed, (\ref{eq:stiffness_equation}) poses a computational challenge, and, because of its conditioning,
iterative methods (without preconditioning) cannot be expected to work well, since  the iterations needed to ensure a prescribed accuracy 
may grow with the number of degrees of freedom.

To this end, we introduce and analyze a mesh-free multigrid algorithm. 
Specifically, we adapt the standard $W$-cycle multigrid method 
to (localized) kernels and provide a rigorous analysis in this setting.
Although multigrid methods recently have gained attention  in the 
mesh-free community, see \cite{multicloud} and \cite{MGM}, 
the rigorous analysis we provide has been missing in the kernel-based context.

The main novelty of this paper is the following:
\begin{enumerate}
\item We prove this mesh-free algorithm is a contraction, with  norm
independent of grid size (see  Theorem \ref{thm:main}). 
By standard techniques, it follows that numerical solutions within given tolerance
can be obtained with a fixed,
with respect to grid size, number of iterations
(see (\ref{eq:mgm_iteration_count}) and adjacent discussion).
\item  The method we present is stable under perturbations of the stiffness, restriction and prolongation matrices. 
Thus small errors, which may come  from sparsifying these matrices (as well as from quadrature, round-off or modification of the kernel),
also 
yield contractions (see  Theorem \ref{thm:theorem_pert}) and
therefore do not hinder performance of the algorithm.
\item By compressing the  stiffness, 
restriction and prolongation matrices, 
which have   rapid off-diagonal decay, we obtain a  solution method enjoying nearly linear complexity.
\end{enumerate}
This  leads to an approximate solution to the system (\ref{eq:stiffness_equation}) 
which requires only 
$\mathcal{O} \bigl( N \log(N)^{d}\bigr)$
 floating point operations per iteration and where again the multigrid matrix has a norm bound less than unity (see 
 Theorem \ref{thm:theorem_pert}), for
a total operation count of 
$$\mathcal{O} \left( N \log(N)^{d}  \log\left({\epsilon_{\max} }\right)\right),$$
where $\epsilon_{\max}$ is the user-prescribed tolerance.
 The approximate solution
$\vec{u}^{\star}$ to (\ref{eq:stiffness_equation})  has
an (additive) error bound of the form 
$
\|\vec{u}-\vec{u}^{\star}\|_{\ell_2}\le  
\epsilon_{\max} + \epsilon_{\mathrm{tr}}$,
where the error due to truncating matrices
is	
$\epsilon_{\mathrm{tr}}
=\mathcal{O}(N^{-J})
$. 
Here $J>0$ is a user-determined constant which depends linearly on the sparsity of truncated matrices in the algorithm.
This is explained in Remark \ref{remark:final}.

\begin{remark}
Before proceeding, we make the following comments:
\begin{itemize}
\item 
We 
do not attempt to modify the underlying framework of the multigrid method,
as described, for instance, in \cite{hackbusch1994iterative,brenner-scott,reuskenLN},
but show that it can be successfully
applied to kernel methods. This is far from being obvious.
We follow conceptually the book \cite{brenner-scott} 
where also the function space view on multigrid 
methods is used.
\item The fact that $\LL$ is injective is
a convenient simplification we assume throughout the article. An investigation of operators with a non-trivial 
nullspace will be considered in a forthcoming work.
\item Throughout this article, 
we will assume to have access to a sequence of quasi-uniform and nested point clouds
$\Xi_{0} \subset \Xi_1 \subset \dots \Xi_{L} \subset \M$ on $\M$, 
and a corresponding highly localized Lagrange basis generated by a kernel $\phi:\M\times \M\to \R$.
Both, the construction of point clouds on manifolds and the computation of the Lagrange basis are independent of the partial differential equation 
and can hence be pre-computed and even be stored.
\item Although numerical integration is necessary to implement the kernel-based Galerkin method, 
our results hold independently
of the choice of quadrature method. 
(This is treated, for instance, in \cite[Section 4]{KNWW}.) 
We therefore assume for the remainder of this paper these steps to be solved. 
In particular, we assume to have access to the stiffness matrices
%
$\stiffness{\Xi}$, as considered in \cite{FHNWW} or \cite{HNRW-manifold}.
\item 
An alternative approach to treating (\ref{eq:stiffness_equation})
would be to apply a suitable preconditioner. To be effective in this context, such methods would also have to be adapted to the kernel situation 
and the analysis including the compression argument would have to be carefully carried over
as well. 
Furthermore, many successful preconditioners for finite elements, like \cite{BPX}, 
typically involve  concepts from multigrid methods such as hierarchy of approximation spaces.
\item 
 We discuss in this article the multigrid method only from the perspective as standalone solver. We point out that the multigrid method itself provides an attractive preconditioner. Most often, the multigrid preconditioner is combined with a flexible GMRES (FGMRES) iteration method, see e.g. \cite{johnLN} and references therein.
This is also implemented in many software libraries such as PETSc
(see \cite[page 92f.]{petsc-user-ref}) just to name one prominent example. We will discuss this in more detail when we present numerical results on this method in an upcoming work.
\end{itemize}
\end{remark}

\paragraph{Outline of the paper}
The remainder of this article is organized as follows.
In section \ref{sec:setup}, we introduce the basic notation of second order elliptic equations on manifolds, and their 
solution via  kernel-based Galerkin approximation. In this section we  demonstrate the {\em approximation property}
in the  kernel context, which, along with the {\em smoothing property}, provides the analytic backbone for the success of the multigrid method.
Section \ref{sec:Lagrange} introduces the phenomenon of rapidly decaying Lagrange-type bases, which
holds for certain kernels -- using such bases permits stiffness matrices with rapid off-diagonal decay, among other things. 
Of special importance is the diagonal behavior of the stiffness matrix given in Lemma \ref{lem:stiffness_diagonal}, 
which is a novel contribution of this paper, 
and which is the  analytic result necessary to prove the {\em smoothing property}.
In  section \ref{sec:smoothing_prop} we discuss the smoothing property of damped 
Jacobi iterations for kernel-based methods both in the case of symmetric 
and non-symmetric differential operators. 
In section \ref{sec:direct_mgm} we introduce kernel-based restriction and prolongation operators,  the standard two-grid method
and then the $W$-cycle. In this section, we prove Theorem \ref{thm:main}, a consequence of which is the bound (\ref{eq:mgm_iteration_count}), which shows the (poly)-logarithmic complexity of our proposed method.
Section \ref{sec:perturb} treats the  error resulting from small perturbations of the stiffness matrix, as well as the prolongation and restriction matrices.
The main result in this section, Theorem \ref{thm:theorem_pert}, demonstrates how such errors affect the multigrid approximation error.
Section \ref{sec:trunc} investigates the computationally efficient  truncated multigrid method as an application of the previous section.

\section{Problem Set Up} \label{sec:setup}
\subsection{Manifold}
Consider a compact  $d$-dimensional Riemannian manifold without boundary $\mathbb{M}$.  Here we list
some useful tools and their properties which hold in this setting. We direct the reader to \cite{AMR, Lee} for 
relevant background. Results about covariant derivatives and Sobolev spaces can be found in \cite{HNW}.
%
%

The tangent bundle is $T\M$  and the cotangent bundle is $T^*\M$.
We denote  by $T^{k}_{r}\M$ the vector
bundle of tensors  with contravariant order $k$ and covariant order $r$; 
we say {\em type} $(k,r)$ for short.
Thus
$T\M=T^{1}_{0} \M$  and $T^*\M=T^{0}_{1}\M $.
We will denote the fiber at $x\in \M$ by $T^{k}_{r}\M_x$.
The space of  tensor fields of type $(k,r)$ 
(known also as sections; i.e.,  maps $\mathbf{S}:\M\to T_r^k\M$
with $\mathbf{S}(x)\in T_r^k\M_x$ 
for every $x\in \M$)
 is denoted $\mathcal{T}_r^k\M$.

In this article, we are concerned primarily with covariant tensors (i.e., tensors of type $(0,k)$), so we 
use the short hand notation $T_k\M = T_k^0\M$
and $\mathcal{T}_k\M=\mathcal{T}_k^0\M$. 

For a chart $(U,\phi)$ for $\M$ from which
we get the usual vector fields $\frac{\partial}{\partial x^j}$
and forms $d x^j$ ($1\le j\le d$), which act as local bases for $T\M$ and $T^*\M$ over $U$.
These can be used to generate bases for tensor fields. In particular, for
 given covariant rank $k$
 and  $\vec{i}=(i_1,\dots, i_k)\in \{1,\dots,d\}^k$
 we have basis 
element $dx^{\vec{i}}:= dx^{i_1} \cdots  dx^{i_k}$ .
This 
allows us to write $\mathbf{S}\in \mathcal{T}_k\M$ in coordinates as
$\mathbf{S} (x)= \sum_{\vec{i}\in \{1,\dots,d\}^k} ( S(x))_{\vec{i}} dx^{\vec{i}}$. 

Because $\M$ is a Riemannian manifold,  at each
$x\in\M$, $T\M_x$ has an inner product $\langle \cdot,\cdot \rangle_x$ and induced norm $\|\cdot\|_x$. This means that
there is by $g\in \mathcal{T}_2\M$ (the metric tensor), so that for  tangent vectors in $V,W \in T\M_x$ 
we have
$g(x)(V,W) =\langle V,W\rangle_{x} $.
The inner product extends to the dual: for cotangent vectors $\mu,\nu \in T^*\M_x$ 
we have $\langle \mu,\nu\rangle_{x} = \sum \mu_j \nu_k g^{j,k}$, where
$\sum g_{j,k} g^{k,\ell} = \delta_{j,\ell}$. From this, it naturally extends to tensors.
 For $\mathbf{T},\mathbf{S} \in {T}_k\M_x$, 
 written in coordinates as
 $\mathbf{T}= \sum_{\vec{j}\in\{1,\dots,d\}^k}T_{\vec{j}} \, d{x^{j_1}}\dots d{x^{j_k}}$
 and $\mathbf{S}= \sum_{\vec{i}\in\{1,\dots,d\}^k}S_{\vec{i}} \, d{x^{i_1}}\dots d{x^{i_k}}$,
 we have
 $$
 \langle \mathbf{T},\mathbf{S}\rangle_{x} 
 = 
 \sum_{\vec{i},\vec{j}\in\{1,\dots,d\}^k} 
    g^{i_1 j_1}(x)\dots g^{i_k,j_k}(x) S_{\vec{i}} T_{\vec{j}}.
 $$

We denote the Riemannian distance on $\M$ by 
$\dist:\M\times \M \to [0,\infty)$; it is given by the formula
$\dist(x,y) = \inf\{ \int_a^b \|\gamma'(t)\|_{\gamma(t)} \dif t \mid \gamma(a)=x,\ \gamma(b)=y\}$
where the infimum is taken over piecewise smooth curves connecting $x$ and $y$.
The Riemannian metric gives rise to a volume form $\dif \mu = \sqrt{\det(g(x))} \dif x$.
By compactness, there exist
constants $0<\alpha_{\M} \le \beta_{\M}$ so that
$$\alpha_{\M} r^d\le  \mu(B(x,r))\le \beta_{\M} r^d$$
or any ball $B(x,r):= \{y\in \mathbb{M}\mid \dist(x,y)<r \}$ centered at $x\in\M$ and having radius $0<r<\mathrm{diam}(\M)$.

For a finite subset $\Xi\subset \M$, we can define the following useful quantities:
the {\em separation distance}, $q$ of $\Xi$  
and the
{\em fill distance}, $h$, of $\Xi$ in $\M$. They are given by
$$q:=\frac12\min_{\zeta\in\Xi} \dist(\zeta,\Xi\setminus\{\zeta\})
\quad\text{and} \quad
h := h(\Xi,\M):= \sup_{x\in\M} \dist(x,\Xi).$$
The finite sets considered throughout this paper will be quasiuniform, with mesh ratio
$\rho:=h/q$ bounded by a fixed constant. For this reason, quantities which are controlled above or below
by a power of $q$ can be likewise controlled by a power of $h$ -- in short, whenever
possible, we express estimates in terms of the fill distance
$h$,  allowing constants to depend on $\rho$.
For instance, the cardinality $|\Xi|$   is bounded above and below by 
\begin{equation}\label{eq:numberofpoints}
\frac{\mu(\M)}{\beta_{\M}}h^{-d} \le |\Xi| \le \frac{\mu(\M)}{\alpha_{\M}}q^{-d}. 
\end{equation}
Similarly, if $f:[0,\infty) \to \R$ is continuous then for any $x\in\M$,
\begin{equation}\label{eq:finite_sum}
\sum_{\zeta\in\Xi} f\bigl(\mathrm{dist}(\zeta,x)\bigr)
\le \max_{t\le q} f(t) +\frac{\beta_{\M}}{\alpha_{\M}} \sum_{n=1}^{\infty} (n+2)^d \max_{nq\le t\le (n+1)q}f(t).
\end{equation}

Moreover, we use the following notation: $\R^{A}=\{f:A \to \R \}$ denotes the functions from the set $A$ to $\R$. Of course, if $A$ is a discrete finite set, we can identity $\R^A \cong \R^{|A|}$.

\subsection{Sobolev spaces}\label{subsec:Sobolev}
%
 The covariant derivative $\nabla$
maps tensor fields  of  type $(r,s)$  to  fields of type $(r,s+1)$. Its adjoint (with respect to the $L_2$ inner products
on the space of sections of $T_r^s(\M)$) is denoted $\nabla^*$.
For functions, this is fairly elementary. The covariant derivative of a  (scalar) function $f:\M\to \R$ 
equals its exterior derivative; in coordinates, we have
$$
\nabla f = 
\sum_{j=1}^d \frac{\partial f}{\partial x^j} dx^j 
=df .
$$
For a 1-form $\omega = \sum_{j=1}^d \omega_j dx^j$, we have
$$
\nabla^* \omega(x)
=  
-\frac{1}{\sqrt{ \det g(x)} } 
  \sum_{j=1}^d \sum_{k=1}^d \frac{\partial }{\partial x_k} 
    \left( \sqrt{ \det g(x)} g^{jk}(x) \omega_j(x)\right).
$$
A direct calculation  shows  $\int_{\M}f(x) \nabla^* \omega (x)\dif \mu(x) = \int_{\M} \langle\omega(x),\nabla f(x)\rangle_x\dif \mu(x)$.

 For $\Omega\subset \M$ Sobolev space $W_p^k(\Omega)$ is defined to be the set of functions $f:\Omega\to \R$ which satisfy 
 $$\| f\|_{W_p^k(\Omega)}^p = \sum_{\ell=0}^k \int_{\Omega} \| \nabla^\ell f\|_{x}^p \dif \mu(x)<\infty,$$
where the $\ell$-th order covariant derivatives can be found in \cite[Section 2.2]{HNW}.

For a scalar function $f:\M\to \R$, 
the Laplace-Beltrami operator is given in local coordinates as
$\Delta f = 
\sum_{j=1}^d
\sum_{k=1}^d 
\frac{1}{\sqrt{|\det g|}}
\frac{\partial}{\partial x^j}
 \bigl( \sqrt{|\det g|} g^{jk} \frac{\partial}{\partial x^k } f\bigr)
 $.
 Thus for scalar functions $\Delta f = -\nabla^* \nabla f$.
For any integer $k\in \N$ and $p\in(1,\infty)$,
the Bessel-potential norm $\|(1-\Delta)^{k/2} f\|_{L_p(\M)}$ is equivalent to $\|f\|_{W_p^k(\M)}$, as demonstrated in
\cite[Theorem 4(ii)]{Trieb}
(although when $k=1$ and $p=2$, the two norms are equal;  this can be observed directly).

Lemma 3.2 from \cite{HNW} applies, so there are uniform constants $\Gamma_1,\Gamma_2$  and 
$r_{\M}>0$
so that the family of exponential maps
$\{\Exp_x: B(0,r_{\M})\to \M\mid x\in\M\}$ (which are diffeomorphisms taking $0$ to $x$)
provides local metric equivalences: for any open set $\Omega\subset B(0,r_{\M})\subset \R^d$, we have 
\begin{equation}\label{eq:expmap}
\Gamma_{1} \|u\circ\Exp_x\|_{W_p^j(\Omega)}\le \|u\|_{W_p^j(\Exp_x(\Omega))}\le \Gamma_{2} \|u\circ\Exp_x\|_{W_p^j(\Omega)}.
\end{equation}

This shows that $W_p^k(\M)$ can be endowed with equivalent norms using a partition of unity $(\varphi_j)_{j\le N}$
 subordinate  to a cover $\{\mathcal{O}_j\}_{j\le N}$ with associated  charts $\psi_j: \mathcal{O}_j\to \R^d$ to obtain
$\|u\|_{W_p^k(\M)}^p \sim \sum_{j=1}^N \|(\varphi_j u)\circ \psi_j^{-1}\|_{W_p^k(\R^d)}^p$. Here constants of equivalence
depend on the partition of unity and charts.

 A useful result in this setting, which we will use explicitly in this article, but is also behind a number 
 of background results in section \ref{sec:Kernel_Setup}, 
 is the following zeros estimate \cite[Corollary A.13 ]{HNW-p}, 
 which holds for Sobolev spaces:
 If $u \in W_2^m(\M)$ satisfies $u\left|_{X}\right. = 0$, then for any $k\le m$  we have
 \begin{equation}\label{eq:eqn_zeros_estimate}
 \|u\|_{W_2^k(\M)} \le \CZ h^{m-k}\|u\|_{W_2^m(\M)}.
 \end{equation}
 Here $\CZ$
 depends on $m$ and  $\M$.
The result can also be obtained on bounded, Lipschitz domain $\Omega \subset \M$
that satisfies a uniform cone condition, with the cone having radius $R_{\Omega} \le  r_{\M}/3$,
although the constant $\CZ$
will depend on the aperture of the cone in that case.
See \cite[Theorem A.11]{HNW-p} 
for a precise statement and definitions of the involved quantities.

\subsection{Elliptic operator}
\label{subsec:EllipticOperator}
 
We write the operator $\LL$ in divergence form:
$$
\LL= \nabla^* \a_2^{\flat}(\nabla\cdot ) + \a_1 \nabla  +\a_0.
$$
Here $\a_0$ is a smooth function, $\a_1$ is a smooth tensor field of type $(1,0)$ 
and $\a_2$ is a smooth tensor field of  type  $(2,0)$ 
which generates the field
$\a_2^{\flat}$ of type  (1,1). 
In coordinates, $\a_2$ has the form $\sum_{j=1}^d \sum_{k=1}^d a^{jk}\frac{\partial}{\partial x_j} \frac{\partial}{\partial x_k}$,
and 
$\a_2^{\flat}$ is $\sum_{j=1}^d \sum_{k=1}^d  \tensor{a}{_k^j}  dx_k \frac{\partial}{\partial x_j}$
with $\tensor{a}{_k^j} = \sum_{\ell=1}^d g_{k\ell} a^{\ell j}$.
Here we have used the index lowering operator $^\flat$ which ensures, for any $\mu,\nu\in T_1\M_x$, that
$\langle \a_2^{\flat}( \mu,\cdot), \nu\rangle_x = \a_2(\mu,\nu)$. 
 This follows
by a direct calculation from the above expressions in  coordinates (index lowering and the $^\flat$ operator are discussed in
 \cite[p.\ 341]{AMR} or \cite[p.\ 27]{Lee}).

Furthermore, we require $c_0>0$ to be a constant
so that
\begin{equation}
\label{eq:ellipticity}
\a_2(x)(v, v)\ge c_0\langle v,v\rangle_x
\quad 
\text{and} 
\quad \a_0+\frac12 \nabla^*\a_1 \ge c_0.
\end{equation}
As a basic example, we consider $\a_2(x)=\langle\cdot,\cdot\rangle_x$, $\a_1=0$ and $\a_0=1$; then (\ref{eq:ellipticity})  holds with $c_0=1$.
In this case, 
$a^{jk} = g^{jk}$,
$\tensor{a}{_k^j}=\delta_{j,k}$ and $\LL=1+\nabla^*\nabla = 1-\Delta$.

With the identity
$\int_{\M}u (\a_1 \nabla u)  =
\frac12 \int_{\M}(\a_1 \nabla( u^2))  
 = \frac12\int_{\M} (\nabla^*\a_1)u^2$,
the second part of (\ref{eq:ellipticity}) 
  ensures that
$$
\int_{\M} u(\a_1 \nabla u + \a_0 u)\ge c_0 \|u\|_{L_2(\M)}^2.$$
We have also that
$\int_{\M}u(x) \nabla^*(\a_2^\flat \nabla u)(x) \dif \mu(x) = \int_{\M} \langle\a_2^\flat \nabla u , \nabla u\rangle_x \dif \mu(x) $.
The definition of $\a_2^\flat $ ensures that this equals
$  \int_{\M} \a_2(x)\bigl( \nabla u (x), \nabla u(x)\bigr) \dif \mu(x) $, and the first 
part of (\ref{eq:ellipticity}) then ensures that
$$
 \int_{\M} \nabla^*(\a_2^\flat \nabla u)(x) \dif \mu(x) \ge
  \int_{\M} \|\nabla u\|_x^2\dif\mu(x).$$

Thus, (\ref{eq:ellipticity}) 
 guarantees that the bilinear form $a( u,v):= \int_{\M} v\LL u $, 
defined initially for smooth functions,
 is bounded on $W_2^1(\M)$ and 
is coercive. 
\change{Thus, we have that the energy quasi-norm $[u]^2_{\LL}:=a(u,u)$ satisfies the metric equivalence
\begin{align}\label{eq:energynorm}
	c_0 \|u\|_{W_2^1(\M)}^2 \le [u]^2_{\LL}=a(u,u)\le C \|u\|_{W_2^1(\M)}^2.
\end{align}
(if $a$ is symmetric, this is a norm, and we write  $\|u\|_{\LL}=[u]_{\LL}$.)}

Please note 
that this excludes differential operators with a null space at the moment. Having in mind the time dependent problems, this assumption is justified. The technical more challenging analysis for those more general operators is left to future research.

\subsection{Galerkin methods}
We fix $f \in L_{2}(\M)$ and consider $u \in W^{1}_{2}(\M)$ as solution to 
\begin{align}\label{eq:weakform}
	a( u,v):=
	\int_{\M} v \LL u
	=(f,v)_{L_{2}(\M)}=F(v)  \quad \text{for all } v \in W^{1}_{2}(\M).
\end{align}
Regularity estimates (\cite[Chapter 5.11, Theorem 11.1]{Taylor}) yield that 
$u \in W^{2}_{2}(\M)$
and $\| u\|_{W^{2}_{2}(\M) } \le C \|f\|_{L_{2}(\M)}$.
Consider now a family of finite dimensional subspaces  $(V_h)$ with
$V_h\subset W_2^1(\M)$ 
associated to a parameter\footnote{In the
sequel, these will be kernel spaces generated by a subset $\Xi\subset \M$, and $h=h(\Xi,\M)$ will be the fill distance.
For now, we consider a more abstract setting, with $h>0$ only playing a role in establishing the approximation
property $\dist_{W_2^1}(g,V_h) \le C h \|g\|_{W_2^2(\M)}$ below.} $h>0$.
Define $P_{V_h}: W^{1}_{2}(\M)  \to V_h$ so that
\begin{align*}
a( P_{V_h }u, v)=
	a( u,v) \quad \text{for all } v \in V_h.
\end{align*}
\change{Because $a\left(  u - P_{V_h}u, P_{V_h}u-v\right)=0$, the classical C{\'e}a Lemma holds:}
\begin{align*}
	c_1 \left\| u - P_{V_h}u \right\|^2_{W^{1}_{2}(\M)} 
	&\le 
	a\left( u  - P_{V_h}u, u - P_{V_h}u \right)
	=\change{ a\left(  u - P_{V_h}u, u-v\right)} \\
	& \le c_2 
	\left\| u - P_{V_h}u  \right\|_{W^{1}_{2}(\M)}\left\| u -v  \right\|_{W^{1}_{2}(\M)} \quad \text{ for all } v\in V_h.
\end{align*}
And thus we get
\begin{align}\label{eq:Cea}
	 \left\| u - P_{V_h} u \right\|_{W^{1}_{2}(\M)} \le \frac{c_2}{c_1} \dist_{
	 {W^{1}_{2}(\M)}}\left(u,V_h \right).
\end{align}
This can be improved by a {\em Nitsche}-type argument to get the following result, whose proof can be found in many textbooks on numerical methods for partial differential equations. 
\begin{lemma}\label{lem:Nitsche}
	Suppose the family $(V_h)$ has the property that
	for all $\tilde{u}\in W_2^2(\M)$, 
	the  distance 
	in $W_2^1(\M)$
	 from $V_h$ satisfies
	$\dist_{\|\cdot\|_{W^{1}_{2}(\M)}} \left(\tilde{u},V_h \right) 
	 \le 
	C  h \left\| \tilde{u} \right\|_{W^{2}_{2}(\M)} 
	$.
	Then for any
	 $u \in W_{2}^{2}(\M)$,
	\begin{align}
\label{eq:Nitsche}
	\left\| u-P_{V_h }u  \right\|_{L_{2}(\M)} 
	\le  
	C  h  \dist_{\|\cdot\|_{W^{1}_{2}(\M)}}\left(u,V_h \right) .
\end{align}
\end{lemma}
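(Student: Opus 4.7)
The plan is to use the classical Aubin--Nitsche duality trick. Write $e := u - P_{V_h} u$, so that, by the Galerkin equation, we have the orthogonality
\begin{equation*}
a(e, v_h) = 0 \quad \text{for all } v_h \in V_h.
\end{equation*}
To bound $\|e\|_{L_2(\M)}$, I would introduce the adjoint problem: let $\varphi \in W_2^1(\M)$ be the unique weak solution of
\begin{equation*}
a(v, \varphi) = (v, e)_{L_2(\M)} \quad \text{for all } v \in W_2^1(\M).
\end{equation*}
Coercivity and boundedness of $a$ (which come from \cref{eq:ellipticity} and \cref{eq:energynorm}) give existence and uniqueness of $\varphi$ together with the regularity estimate $\|\varphi\|_{W_2^2(\M)} \le C \|e\|_{L_2(\M)}$, by the same elliptic regularity result cited just before \eqref{eq:weakform} (applied to the formal adjoint of $\LL$, which is itself a second-order elliptic operator of the same type).

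Next, plugging $v = e$ into the adjoint equation and using Galerkin orthogonality yields
\begin{equation*}
\|e\|_{L_2(\M)}^2 = a(e, \varphi) = a(e, \varphi - v_h)
\end{equation*}
for every $v_h \in V_h$. Boundedness of $a$ on $W_2^1(\M)$ combined with C{\'e}a's estimate \cref{eq:Cea} gives $\|e\|_{W_2^1(\M)} \le C\, \dist_{W_2^1(\M)}(u, V_h)$, so
\begin{equation*}
\|e\|_{L_2(\M)}^2 \le C \|e\|_{W_2^1(\M)} \inf_{v_h \in V_h}\|\varphi - v_h\|_{W_2^1(\M)} \le C\, \dist_{W_2^1(\M)}(u, V_h) \cdot \inf_{v_h \in V_h}\|\varphi - v_h\|_{W_2^1(\M)}.
\end{equation*}
Finally, invoking the hypothesized approximation property on $\varphi$, together with the adjoint regularity bound,
\begin{equation*}
\inf_{v_h \in V_h}\|\varphi - v_h\|_{W_2^1(\M)} \le C h \|\varphi\|_{W_2^2(\M)} \le C h \|e\|_{L_2(\M)},
\end{equation*}
and dividing by $\|e\|_{L_2(\M)}$ produces the required bound.

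The only non-routine point is the regularity of the dual solution $\varphi$. Because $\LL$ here is not self-adjoint (it contains the first-order term $\a_1 \nabla$), one must invoke elliptic regularity for the formal adjoint $\LL^\ast$, which is again a second-order uniformly elliptic operator whose zeroth-order coefficient inherits the sign condition \cref{eq:ellipticity}; the same theorem from \cite{Taylor} cited earlier then yields $\varphi \in W_2^2(\M)$ with $\|\varphi\|_{W_2^2(\M)} \lesssim \|e\|_{L_2(\M)}$. Everything else is standard manipulation using C{\'e}a, Galerkin orthogonality, and the approximation hypothesis.
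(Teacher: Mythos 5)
Your proof is correct and is exactly the classical Aubin--Nitsche duality argument that the paper references: the paper does not spell it out but simply remarks that the lemma follows from ``a Nitsche-type argument'' found in standard textbooks. Your treatment of the adjoint regularity in the non-self-adjoint case is the right thing to worry about, and your resolution (elliptic regularity for $\LL^\ast$, which has the same principal symbol and hence the same regularity theory) is the standard and correct one.
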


We point out, that Lemma \ref{lem:Nitsche} does not depend on the choice of a specific basis in the finite dimensional space $ V_h $.

\subsection{Kernel approximation}\label{sec:Kernel_Setup}
We  consider a continuous
function $\phi:\M \times \M\to \R$,  the  {\em kernel},
which  satisfies a number of analytic properties which we explain in this section.

Most important is that $\phi$ is {\em conditionally positive definite} with respect to some 
(possibly trivial) finite
dimensional subspace\footnote{generally a space spanned by some
eigenfunctions of the Laplace-Beltrami operator} $\Pi\subset C^{\infty}(\M)$.
Conditional positive definiteness with respect to $\Pi$ means that for any  $\Xi\subset \M$,
the collocation matrix $\bigl(\phi(\xi,\zeta)\bigr)_{\xi,\zeta}$ is positive definite on the vector
space $\{a\in \R^{\Xi}\mid  (\forall p\in \Pi)\, \sum_{\xi\in\Xi} a_{\xi} p(\xi) = 0\}$. 
As a result, 
if $\Xi$  separates elements of $\Pi$, then the space 
\begin{align}\label{eq:trialspace}
V_{\Xi}:=\left\{\sum_{\xi \in \Xi} a(\xi)\phi(\cdot,\xi)\mid  (\forall p\in \Pi)\, \sum_{\xi\in\Xi} a_{\xi} p(\xi) = 0\right\}+\Pi
\end{align}
has dimension $\Xi$.
In case $\Pi=\{0\}$, the kernel is {\em positive definite}, and the collocation matrix is strictly positive definite on $\R^{\Xi}$,
and $V_{\Xi} = \mathrm{span}_{\xi\in \Xi} \phi(\cdot,\xi)$.

For a conditionally positive definite kernel, there is an associated reproducing kernel semi-Hilbert space
$\Nn(\phi)\subset C(\M)$ with the property that $\Pi = \mathrm{null}(\|\cdot\|_{\Nn(\phi)})$ and 
that for any $a\in \R^{\Xi}$ for which $ \sum_{\xi\in\Xi} a_{\xi} \delta_{\xi}\perp \Pi$, the following identity
$$
\sum_{\xi \in \Xi} a_{\xi} f(\xi) = \langle f, \sum_{\xi \in \Xi} a_{\xi} \phi(\cdot,\xi)\rangle_{\Nn(\phi)}
$$ holds for all 
$f\in\Nn(\phi)$.
It follows that if $\Xi$  separates elements of $\Pi$, interpolation  with $V_{\Xi}$ is well defined on $\Xi$
and the projection $I_{\Xi} :\Nn(\phi)\to V_{\Xi}$ is orthogonal with respect to the semi-norm on $\Nn(\phi)$.
Of special interest are (conditionally) positive definite kernels that
have Sobolev native spaces.
\begin{lemma}
If $\phi$ is conditionally positive definite with respect to $\Pi$ 
and satisfies the equivalence $\Nn(\phi)/\Pi\cong W_2^m(\M)/\Pi$.
then there is a constant $C$ so that for 
 any integers $k,j$ with $0\le k\le j\le m$
and  any $u \in W_2^j(\M)$, 
 $$\dist_{W_2^k(\M)}(u,V_{\Xi})\le Ch^{j-k}\|u \|_{W_2^j(\M)}.$$
\end{lemma}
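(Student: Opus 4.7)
The plan is to reduce the general $W_2^j$ approximation bound to the borderline ``native space'' case $j=m$, and to handle that case using the reproducing kernel interpolant and the zeros estimate \cref{eq:eqn_zeros_estimate}.

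First I would dispose of the case $j=m$. Since $\Xi$ separates $\Pi$, the reproducing kernel interpolation operator $I_\Xi:\Nn\to V_\Xi$ is well defined and, as noted just before the lemma, is an orthogonal projection with respect to the native semi-norm, so $\|u-I_\Xi u\|_{\Nn} \le \|u\|_{\Nn}$ for $u\in\Nn$. Given $u\in W_2^m(\M)$, the equivalence $\Nn/\Pi \cong W_2^m(\M)/\Pi$ lets me pick a representative of $u$ in $\Nn$ with $\|u\|_{\Nn}\le C\|u\|_{W_2^m(\M)}$ (modulo $\Pi\subset V_\Xi$, which does not affect the distance). Then the function $u-I_\Xi u$ vanishes on $\Xi$ and lies in $W_2^m(\M)$, so the zeros estimate \cref{eq:eqn_zeros_estimate} gives
\begin{equation*}
\|u-I_\Xi u\|_{W_2^k(\M)} \le C_Z h^{m-k}\|u-I_\Xi u\|_{W_2^m(\M)}\le C h^{m-k}\|u-I_\Xi u\|_{\Nn}\le C h^{m-k}\|u\|_{W_2^m(\M)},
\end{equation*}
which yields $\dist_{W_2^k(\M)}(u,V_\Xi)\le C h^{m-k}\|u\|_{W_2^m(\M)}$ for $0\le k\le m$.

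Next I would pass from the smooth case to general $u\in W_2^j(\M)$ with $k\le j\le m$ by a standard K-functional/mollification argument on the manifold. Using a fixed partition of unity $(\varphi_i)$ subordinate to a chart cover, I mollify each pullback $(\varphi_i u)\circ\psi_i^{-1}$ on $\R^d$ at scale $h$ and reassemble to produce $u_h\in W_2^m(\M)$ with the simultaneous estimates
\begin{equation*}
\|u-u_h\|_{W_2^k(\M)}\le C h^{j-k}\|u\|_{W_2^j(\M)},\qquad \|u_h\|_{W_2^m(\M)}\le C h^{j-m}\|u\|_{W_2^j(\M)};
\end{equation*}
the chart-wise metric equivalence \cref{eq:expmap} ensures that these Euclidean smoothing bounds transfer to $\M$. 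Combining with the $j=m$ case applied to $u_h$,
\begin{equation*}
\dist_{W_2^k(\M)}(u,V_\Xi)\le \|u-u_h\|_{W_2^k(\M)}+\dist_{W_2^k(\M)}(u_h,V_\Xi)\le C h^{j-k}\|u\|_{W_2^j(\M)}+C h^{m-k}\cdot h^{j-m}\|u\|_{W_2^j(\M)},
\end{equation*}
which collapses to the desired bound.

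The only delicate point is the mollification step, i.e.\ constructing $u_h$ with the paired ``low norm / high norm'' estimates globally on $\M$. The construction itself is routine from the Euclidean theory once the partition of unity description of $W_2^k(\M)$ is in hand, but a little care is needed to ensure the implicit constants are independent of $\Xi$ and depend only on $\M$, the charts, and the mesh ratio $\rho$; the $j=m$ endpoint, by contrast, is essentially immediate from the native-space orthogonality of $I_\Xi$ together with \cref{eq:eqn_zeros_estimate}.
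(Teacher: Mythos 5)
Your $j=m$ endpoint is exactly the paper's argument: use the native-space interpolant $I_\Xi$, the orthogonality property $\|u-I_\Xi u\|_{\Nn}\le\|u\|_{\Nn}$, the norm equivalence $\Nn/\Pi\cong W_2^m/\Pi$, and the zeros estimate. For the intermediate case $0\le k<j<m$, however, you take a genuinely different route: you construct the "high-smoothness / low-error" approximant $u_h$ explicitly by chart-wise mollification at scale $h$ and reassembly via a partition of unity, whereas the paper obtains the same pair of estimates
\begin{equation*}
\|\tilde u-g\|_{W_2^k(\M)}\le Ch^{j-k}\|\tilde u\|_{W_2^j(\M)},\qquad \|g\|_{W_2^m(\M)}\le Ch^{j-m}\|\tilde u\|_{W_2^j(\M)}
\end{equation*}
abstractly, by invoking the real-interpolation identity $W_2^j(\M)=[W_2^k(\M),W_2^m(\M)]_{\frac{j-k}{m-k},2}$ and using boundedness of $t\mapsto t^{-\frac{j-k}{m-k}}K(\tilde u,t)$ at $t=h^{m-k}$. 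Both arguments then finish by applying the $j=m$ bound to the intermediate approximant and a triangle inequality. The paper's approach is shorter and cites a known interpolation theorem; yours is more elementary and self-contained, but you correctly flag that it carries the burden of verifying that the manifold mollification constants are uniform in $h$ and depend only on $\M$, the atlas, and $\rho$ --- this is the one place where your version requires real work that the paper offloads to the interpolation-space literature. The argument is sound.
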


\begin{proof}
The case $k=j$ is trivial; it follows by considering $0\in V_{\Xi}$.

For $j=m$ and $0\le k<m$, 
the  zeros estimate \cite[Corollary A.13]{HNW-p} 
ensures 
that
$$\|I_{\Xi} u-u\|_{W_2^k(\M)}\le C h^{m-k} \|I_{\Xi}u-u\|_{W_2^{m}(\M)}$$
 (because the interpolation error $I_{\Xi}f-f$ vanishes on $\Xi$).   
 The hypothesis then gives, 
$\|I_{\Xi} u-u\|_{W_2^k(\M)}\le C h^{m-k} \|I_{\Xi}u - u\|_{\Nn(\phi)}$,
with a suitably enlarged constant.
Because $I_{\Xi}$ is an orthogonal projector, $\|I_{\Xi}u - u\|_{\Nn(\phi)}\le \|u\|_{\Nn}(\phi)$, 
so we have (again enlarging the constant)
 that $$ \|I_{\Xi} u-u\|_{W_2^k(\M)}\le Ch^{m-k}\|u\|_{W_2^m(\M)}.$$

For $0\le k< j<m$, we use the fact that
$W_2^j(\M)$ is the real interpolation space $[W_2^k(\M),W_2^m(\M)]_{\frac{j-k}{m-k},2}$.
This is \cite[Theorem 5]{Trieb}. 
For  $\tilde{u}\in W_2^j(\M)$,
this means that  the $K$-functional  
$$K(\tilde{u},t) = \inf_{g\in W_2^m(\M)} \|\tilde{u}-g\|_{W_2^k(\M)} +t\|g\|_{W_2^m(\M)}$$
satisfies the condition 
$\int_0^{\infty} (t^{-\frac{j-k}{m-k}} K(\tilde{u},t))^2 \frac{\dif t}{t} <\infty$.
Since $K(\tilde{u},t)$ is continuous and monotone, we have that 
$t\mapsto t^{-\frac{j-k}{m-k}} K(\tilde{u},t)$ is bounded on $(0,\infty)$.
Thus for $t = h^{m-k}$, there exists  $g\in W_2^m(\M)$ so that
\begin{align*}
 \|\tilde{u} -g\|_{W_2^k(\M)}
 &\le C h^{j-k} \|\tilde{u} \|_{W_2^j(\M)}
&\text{and}&
 &
\|g\|_{W_2^m(\M)} 
&\le Ch^{j-m}\|\tilde{u} \|_{W_2^j(\M)}.
\end{align*}
The above estimate gives
$\|I_{\Xi}g-g\|_{W_2^k(\M)}\le Ch^{m-k}\|g\|_{W_2^m(\M)}$.
This  implies that $\|\tilde{u}-I_{\Xi}g\|_{W_2^k(\M)}\le C h^{j-k} \|\tilde{u} \|_{W_2^j(\M)}$
as desired.
\end{proof}

As a consequence, the kernel Galerkin solution $u_{\Xi} \in V_{\Xi}$
to $\LL u=f$ with $f\in W_2^{j+1}(\M)$ satisfies
$\|u- u_{\Xi}\|_{W_2^1(\M)}\le C h^{j-1}\|f\|_{W_2^{j+1}(\M)}$.
More importantly, for our purposes, the hypothesis of Lemma \ref{lem:Nitsche} is satisfied by
the space $V_{\Xi}$.
Indeed, by (\ref{eq:Nitsche}), we have the following {\em approximation property}:
\begin{equation}\label{eq:approximation_property}
\|u-P_{\Xi} u\|_{L_2(\M)} \le C h \|u\|_{W_2^1(\M)}
\end{equation}
which, together with a {\em smoothing property}, forms the backbone of the convergence theory for 
the multigrid method.
\section{The Lagrange basis and stiffness matrix}\label{sec:Lagrange}
For a kernel $\phi$ and a set $\Xi$ which separates points of $\Pi$, the Lagrange basis
$(\chi_{\xi})_{\xi\in\Xi}$ for $V_{\Xi}$
satisfies $\chi_{\xi}(\zeta) = \delta_{\xi,\zeta}$ for each $\xi,\zeta\in \Xi$.

A natural consequence of $\Nn(\phi) \cong W_2^m(\M)$ is that 
there exists a constant $C$ so that for any $\Xi\subset \M$, 
the bound $\|\chi_{\xi}\|_{W_2^m(\M)} \le C q^{\frac{d}{2}-m}$ holds. 
\crchange{Especially on Riemannian manifolds without boundary, much stronger statement can be proven, see \cite{HNW,HNSW,HNW-p} or \cite{FHNWW} on $\mathbb{S}^2$. We are, however, not convinced that the list of settings where stronger results hold is complete. In order to allow our results to be applied in future settings where those statement will be shown, we will formulate those stronger statements as assumptions or building blocks.}

A stronger result is the following:
\begin{assumption}\label{assumption:lagrange_decay}
We assume that there is $m>d/2+1$ so that $\Nn(\phi)\cong W_2^m(\M)$,
and, furthermore,
there exist constants $\nu>0$ and $\Cen$ so that for 
$R>0$
$$\|\chi_{\xi}\|_{W_2^m(\M\setminus B(\xi,R))} \le \Cen q^{\frac{d}{2} -m}e^{-\nu \frac{R}{h}}.$$
\end{assumption}
This gives rise to a number of analytic properties, some of which we present here
(there are many more, see \cite{HNSW} and \cite{HNRW-manifold} for a detailed discussions).
For the following estimates, the constants of equivalence depend on 
$\Cen$, $\nu$, and $\rho$.

{\em Pointwise decay:} there exist constants $C$ and $\nu>0$ so that for any $\Xi\subset \M$, the estimate
\begin{equation}\label{eq:ptwise_decay}
|\chi_{\xi}(x)|\le \Cpw e^{-\nu \frac{\dist(x,\xi)}{h}}
\end{equation}
holds. 
Here $\Cpw \le \rho^{m-d/2} \Cen$, where we recall that
the {\em mesh ratio} is
$\rho=h/q$.  

{\em H{\"o}lder  continuity:} Of later importance, we mention the following condition, which follows from \cite[Corollary A.15]{HNW-p}.
For any $\epsilon<m-d/2$, the Lagrange function is $\epsilon$ H{\"o}lder continuous, and
satisfies the bound
\begin{equation}\label{eq:holder}
|\chi_{\xi} (x) -\chi_{\xi}(y)| \le \CH \bigl( \dist(x,y)\bigr)^{\epsilon}h^{-\epsilon}. 
\end{equation}

Although we make explicit the dependence on $\rho$ here, for the
remainder of this article, we assume that constants that follow depend on $\rho$.

{\em Riesz property:} there exist constants $0<C_1\le C_2<\infty$ so that for any $\Xi\subset \M$, and $a\in \R^{\Xi}$, we have
\begin{equation}\label{eq:riesz}
C_1 q^{d/2}\|a\|_{\ell_2(\Xi)} \le \|\sum_{\xi\in \Xi} a_{\xi} \chi_{\xi}\|_{L_2(\M)} \le C_2 q^{d/2} \|a\|_{\ell_2(\Xi)}.
\end{equation}

{\em Bernstein inequalities:} There is a constant $\CB$ so that for $0\le k\le m$,
\begin{equation}\label{eq:Bern_disc}
 \|\sum_{\xi\in \Xi} a_{\xi} \chi_{\xi}\|_{W_2^k(\M)}
  \le 
  \CB h^{d/2-k} \|a\|_{\ell_2(\Xi)}.
\end{equation}

\subsection{The stiffness matrix}
We now discuss the stiffness matrix and some of its properties. Most of these have appeared in
\cite{collins2021kernel}, with earlier versions for the sphere appearing in \cite{HNW} and \cite{NRW}.

A consequence of the results of this section is
that the problem of calculating the Galerkin solution to $\LL u=f$ from $V_{\Xi}$
involves treating a problem whose condition number grows like $\mathcal{O}(h^{-2})$ -- this 
is the fundamental issue that the multigrid method seeks to overcome.

The {\em analysis map} for 
$\bigl(\chi_{\xi}\bigr)_{\xi\in\Xi_{\ell}}$ with respect to the 
bilinear form $a$ defined in \eqref{eq:weakform}
is
\begin{align*}
	\sigma_{\Xi}: 
	\left( W^{1}_{2}(\M) , a(\cdot,\cdot)\right) 
	\to
	 \left(\R^{\Xi} ,(\cdot,\cdot)_{2}\right)
	 : \quad 
	 v \mapsto 
	\bigl(
	a(v,  \chi_{\xi})\bigr)_{\xi\in\Xi}^{T}.
\end{align*}
The analysis map is a surjection.

The {\em synthesis map} is
\begin{align*}
	\sigma^{\ast}_{\Xi}:
	\left(\R^{\Xi} ,(\cdot,\cdot)_{2}\right)
	\to 
	\left( W^{1}_{2}(\M) , a(\cdot,\cdot)\right) 
	: \quad
	  \vec{w} \mapsto \sum_{\xi\in\Xi} w_{\xi} \chi_{\xi}.
\end{align*}
The range of the synthesis map is clearly $V_{\Xi}$; 
in other words, it is the natural isomorphism between Euclidean space and the finite dimensional kernel space; indeed, 
(\ref{eq:riesz}) shows that it is bounded above and below between $L_2(\M)$ and $\ell_2(\Xi)$.
By abusing notation slightly, 
we write $\bigl(\sigma^{\ast}_{\Xi}\bigr)^{-1}: V_{\Xi}\to \R^{\Xi}$.
This permits a direct matrix representation of linear operators on $V_{\Xi}$ via conjugation:
$S \mapsto \mathbf{S}:= (\sigma_{\Xi}^*)^{-1} S \sigma_{\Xi}^*\in \R^{\Xi\times \Xi}$.  Furthermore, by the Riesz property (\ref{eq:riesz}), we have
\begin{align}\label{eq:rieszconsequence}
 \frac{C_1}{C_2}\|\mathbf{S}\|_{\ell_2(\Xi)\to\ell_2(\Xi)}  \le  \|S\|_{L_2(\M)\to L_2(\M)} \le \frac{C_2}{C_1} \|\mathbf{S}\|_{\ell_2(\Xi)\to \ell_2(\Xi)}.
 \end{align}

A simple calculation shows that
$	
a\left(\sigma^{\ast}_{\Xi}(\vec{w}),v\right)
	=
\left\langle \vec{w},\sigma_{\Xi}(v)\right\rangle_2
$, 
so $\sigma_{\Xi}^\ast$ is the $a$-adjoint of $ \sigma_{\Xi}$.
Of course, when $a$ is symmetric, we also have 
$\left\langle  \sigma_{\ell}(v),\vec{w}\right\rangle_2= a\left( v,\sigma^{\ast}_{\ell}(\vec{w})\right)$.

The stiffness matrix is defined as
\begin{align}\label{eq:stiffnessmatdef}
\stiffness{\Xi}:=(A_{\xi,\eta})_{\xi,\eta \in \Xi}:=
\Bigl( a\bigl( \chi_{\xi},\chi_{\zeta}\bigr)\Bigr)_{\xi,\zeta\in\Xi}.
\end{align}
It  represents the operator $\mathcal{L}$
on the finite dimensional space $V_{\Xi} $.
Using the analysis and synthesis maps, 
$\stiffness{\Xi}
	=
	\sigma_{\Xi}\circ  \sigma^{\ast}_{\Xi}
	: \left(\R^{\Xi} ,(\cdot,\cdot)_{2}\right)  
	\to 
	\left(\R^{\Xi} ,(\cdot,\cdot)_{2}\right) : 
	\vec{c}\mapsto \left(a( \chi_{\xi}, \chi_{\zeta})\right)_{\xi,\zeta\in\Xi} \vec{c}
$, 
and we have that the Galerkin projector  
$P_{\Xi}: W_2^1(\M) \to V_{\Xi}$ 
can be expressed as 
$P_{\Xi}=\sigma^{\ast}_{\Xi} \left(\sigma_{\Xi}\circ  \sigma^{\ast}_{\Xi} \right)^{-1}\sigma_{\Xi}$.

\begin{lemma}\label{lem:diag_decay} There is a constant $\Cstiff$ so that the entries of the stiffness matrix satisfy
\begin{equation*}
|A_{\xi,\eta}|
\le 
 \Cstiff h^{d-2} e^{-\frac{\nu}{2} \frac{\dist(\xi,\eta)}{h}}.
\end{equation*}
\end{lemma}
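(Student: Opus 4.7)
The plan is to combine the $W_2^1$-continuity of the bilinear form $a$ with an exponentially decaying $W_2^1$ estimate for the Lagrange functions on the exterior of balls, and then to split the manifold around the midpoint between $\xi$ and $\eta$. Writing $\LL$ in divergence form and using smoothness of $\a_0,\a_1,\a_2$, for any measurable $\Omega\subset\M$ one has the localized continuity bound
\[
|a_\Omega(u,v)| \le C \|u\|_{W_2^1(\Omega)}\|v\|_{W_2^1(\Omega)},
\]
with $a_\Omega$ denoting the same integrand as $a$ but integrated only over $\Omega$.

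The crucial auxiliary result I would establish first is the exterior $W_2^1$ decay
\[
\|\chi_\xi\|_{W_2^1(\M\setminus B(\xi,R))} \le C\, h^{d/2-1} e^{-\nu R/h}.
\]
The naive Sobolev embedding $W_2^m\hookrightarrow W_2^1$ applied to \cref{assumption:lagrange_decay} yields the poor prefactor $h^{d/2-m}$; correcting this is the main obstacle. The remedy is to apply the zeros estimate \cref{eq:eqn_zeros_estimate} on the exterior domain $\Omega_R := \M\setminus B(\xi,R)$, which is a Lipschitz domain with a uniform cone condition provided $R$ is bounded above by a fixed fraction of $\mathrm{diam}(\M)$. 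Since $\chi_\xi$ vanishes on $\Xi\setminus\{\xi\}\supset \Xi\cap \Omega_R$ and the fill distance of this discrete set in $\Omega_R$ is $O(h)$, the zeros estimate gives $\|\chi_\xi\|_{W_2^1(\Omega_R)} \le C h^{m-1}\|\chi_\xi\|_{W_2^m(\Omega_R)}$. Combining with \cref{assumption:lagrange_decay} and the quasi-uniformity $q\sim h$ produces the displayed bound with the correct prefactor.

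With this auxiliary estimate in hand, set $R := \dist(\xi,\eta)/2$ and split $\M = B(\xi,R) \cup (\M\setminus B(\xi,R))$. On the first piece, the triangle inequality gives $B(\xi,R)\subset \M\setminus B(\eta,R)$, so Cauchy--Schwarz combined with the Bernstein bound \cref{eq:Bern_disc} for $\chi_\xi$ and the auxiliary estimate applied to $\chi_\eta$ yields
\[
|a_{B(\xi,R)}(\chi_\xi,\chi_\eta)| \le C\, h^{d/2-1}\cdot h^{d/2-1} e^{-\nu R/h} = C\, h^{d-2} e^{-\nu R/h}.
\]
On $\M\setminus B(\xi,R)$ the symmetric argument with the roles of $\chi_\xi$ and $\chi_\eta$ swapped produces the same bound. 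Summing the two contributions and writing $R = \dist(\xi,\eta)/2$ yields the claim. For the short-range regime $\dist(\xi,\eta) \lesssim h$ the exponential factor is bounded below by a constant, and the bound reduces to the trivial continuity estimate $|a(\chi_\xi,\chi_\eta)| \le C\|\chi_\xi\|_{W_2^1(\M)}\|\chi_\eta\|_{W_2^1(\M)} \le C h^{d-2}$ coming directly from \cref{eq:Bern_disc}, which can be absorbed into $C_{stiff}$.
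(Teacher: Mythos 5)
Your proof is correct and follows essentially the same strategy as the paper's: split the integral around the midpoint $R=\dist(\xi,\eta)/2$, apply Cauchy--Schwarz locally, bound the ``near'' factor by the Bernstein inequality \cref{eq:Bern_disc} and the ``far'' factor by the zeros estimate on an exterior domain combined with \cref{assumption:lagrange_decay}. The only cosmetic difference is the choice of decomposition---you split into $B(\xi,R)$ and its complement while the paper uses the half-spaces $H_{\pm}=\{x:\dist(x,\xi)\lessgtr\dist(x,\eta)\}$, and the paper invokes a dedicated zeros estimate for complements of balls (Corollary A.17 of the cited reference) rather than the general form with an explicit cone-condition check---but both routes yield the identical bound.
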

\begin{proof}
For a tensor field $F\in \mathcal{T}_1\M$, we write $|F|:\M\to \R: x\mapsto \|F(x)\|_x$.
Thus using $|\nabla\chi_{\xi}|(x) = \|\nabla\chi_{\xi}(x)\|_x$,
we can bound the integral 
\begin{align*}
|a( \chi_{\xi},\chi_{\eta})|
&\le 
\|\a_2\|_{\infty} \langle |\nabla \chi_{\xi}| ,|\nabla \chi_{\eta}|\rangle_{L_2(\M)}
+ 
\|\a_1\|_{\infty} \langle |\nabla \chi_{\xi}| ,| \chi_{\eta}|\rangle_{L_2(\M)}
\\&+\|\a_0\|_{\infty}\langle |\chi_{\xi}|, |\chi_{\eta}|\rangle_{L_2(\M)}.
\end{align*}
Decompose each inner product  %
using the half spaces $H_+:=\{x\mid\dist(x,\xi)<\dist(x,\eta)\}$ and $H_-=\M\setminus H_+$, noting
$H_+\subset \M\setminus B(\eta,R)$ and $H_-\subset \M\setminus B(\xi,R)$, with $R= \dist(\xi,\eta)/2$.
By applying Cauchy-Schwarz to 
each integral gives, after combining terms,
$$|A_{\xi,\eta}| 
\le C_{\LL}(\|\chi_{\xi} \|_{W_2^1(\M)} \| \chi_{\eta}\|_{W_2^1(\M\setminus B(\eta,R))}
+
\| \chi_{\xi} \|_{W_2^1(\M\setminus B(\xi,R))} \| \chi_{\eta}\|_{W_2^1(\M)})
$$
for some constant $C_{\LL}$ depending on the coefficients of $\LL$.

We  have
$\| \chi_{\xi} \|_{W_2^1(\M)} \le \CB h^{d/2-1}$
by the Bernstein inequality (\ref{eq:Bern_disc}) (with a similar estimate for $\chi_{\eta}$).
The zeros estimate for complements of balls, 
\cite[Corollary A.17]{HNW-p},
applied to $\chi_{\xi}$ gives
 $$\| \chi_{\xi} \|_{W_2^1(\M\setminus B(\xi,R))} \le \CZ h^{m-1}\| \chi_{\xi} \|_{W_2^m(\M\setminus B(\xi,R))} $$
 (with a similar estimate for $\chi_{\eta}$). 
 Thus we have
\begin{eqnarray*}
|A_{\xi,\eta}| 
&\le& 
C_{\LL} \CB \CZ
h^{d/2+m-2}( \|\chi_{\xi}\|_{W_2^m(\M\setminus B(\xi,R))}+ \|\chi_{\eta}\|_{W_2^m(\M\setminus B(\eta,R))})\\
&\le&
 2C_{\LL} \CB \CZ \Cen  h^{d/2+m-2} q^{d/2 -m} e^{-\nu R/h}.
\end{eqnarray*}
The lemma follows with $\Cstiff = 2C_{\LL} \CB \CZ \Cen \rho^{m-d/2}$.
\end{proof}
By considering row and column sums, we have that 
\begin{align}\label{eq:stiffnessupperbound}
\left\|\stiffness{\Xi} \right\|_{2 \to 2} 
 \le  C_A  h^{d-2}
\end{align}
holds with $C_A= \Cstiff  (1+\sum_{n=1}^{\infty} (n+2)^d e^{-\frac{\nu}{2\rho}n})$.

\begin{lemma}\label{lem:stiffness_inverse}
Under Assumptions 1,  for $\Xi\subset \M$, the stiffness matrix satisfies
$$\|\vec{A}_{\Xi}^{-1}\|_{\ell_2\to\ell_2}\le \CH h^{-d}$$
with a constant $\CH$ which  is  independent of $\Xi$.
\end{lemma}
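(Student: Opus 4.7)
The plan is to exploit coercivity of the bilinear form $a$ together with the Riesz lower bound \eqref{eq:riesz} for the Lagrange basis, reading off $\|\vec{A}_\Xi^{-1}\|_{\ell_2\to\ell_2}$ from a one-sided quadratic-form estimate. The starting point is the identity $\vec{w}^T \vec{A}_\Xi \vec{w} = a\bigl(\sigma_\Xi^\ast \vec{w}, \sigma_\Xi^\ast \vec{w}\bigr)$, which is immediate from the definition $A_{\xi,\eta} = a(\chi_\xi,\chi_\eta)$ and bilinearity of $a$, and is exactly the factorization $\vec{A}_\Xi = \sigma_\Xi \circ \sigma_\Xi^\ast$ already recorded in the excerpt.

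From here the estimate is a chain of three inequalities. First, coercivity \eqref{eq:energynorm} gives $a(\sigma_\Xi^\ast \vec{w}, \sigma_\Xi^\ast \vec{w}) \ge c_0 \|\sigma_\Xi^\ast \vec{w}\|_{W_2^1(\M)}^2$, and trivially $\|\sigma_\Xi^\ast\vec{w}\|_{W_2^1(\M)} \ge \|\sigma_\Xi^\ast \vec{w}\|_{L_2(\M)}$. Second, the lower Riesz bound \eqref{eq:riesz} yields $\|\sigma_\Xi^\ast \vec{w}\|_{L_2(\M)}^2 \ge C_1^2 q^d \|\vec{w}\|_{\ell_2(\Xi)}^2$. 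Third, quasi-uniformity $h/q \le \rho$ converts this to a power of $h$: $q^d \ge \rho^{-d} h^d$. Combining these gives
\begin{equation*}
\vec{w}^T \vec{A}_\Xi \vec{w} \ge c_0 C_1^2 \rho^{-d} h^d \|\vec{w}\|_{\ell_2(\Xi)}^2.
\end{equation*}
A single application of the Cauchy--Schwarz inequality in $\ell_2(\Xi)$ then converts this quadratic-form lower bound into the desired operator-type inequality: $\|\vec{A}_\Xi \vec{w}\|_{\ell_2} \ge c_0 C_1^2 \rho^{-d} h^d \|\vec{w}\|_{\ell_2}$, equivalently $\|\vec{A}_\Xi^{-1}\|_{\ell_2\to\ell_2} \le C_{inv} h^{-d}$ with $C_{inv} := \rho^d/(c_0 C_1^2)$, which depends only on the coercivity constant, the lower Riesz constant, and the mesh-ratio bound $\rho$, hence is independent of $\Xi$.

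There is not really a hard step here; the proof is essentially an assembly of ingredients already stated. The only mild subtlety is that the bilinear form $a$ need not be symmetric, but this is harmless: $\vec{w}^T \vec{A}_\Xi \vec{w}$ only sees the symmetric part of $\vec{A}_\Xi$, and $a(u,u)$ is always real and bounded below by $c_0 \|u\|_{W_2^1(\M)}^2$ by coercivity, which is all that is used. I would also note that the bound is sharp in its $h$-dependence, since together with the upper bound \eqref{eq:stiffnessupperbound} it reproduces the expected condition number estimate $\mathrm{cond}_2(\vec{A}_\Xi) \lesssim h^{-2}$ announced in the introduction, which motivates the need for multi-grid in the first place.
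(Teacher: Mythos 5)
Your proof is correct and follows essentially the same route as the paper: coercivity of $a$, the lower Riesz bound \cref{eq:riesz}, and Cauchy--Schwarz in $\ell_2(\Xi)$. The only difference is cosmetic: where the paper passes from $\|v\|_{W_2^1(\M)}$ to $\|v\|_{L_2(\M)}$ by forming the stiffness matrix of $1-\Delta$ and invoking $\sigma(1-\Delta)\subset[1,\infty)$, you simply observe that the $L_2$ norm is one summand of the $W_2^1$ norm as defined in \cref{subsec:Sobolev}, which is cleaner and gives the same conclusion.
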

\begin{proof}
 Coercivity ensures
$
|a( \sum_{\xi \in \Xi} v_{\xi} \chi_{\xi},\sum_{\xi \in \Xi} v_{\xi} \chi_{\xi} )|
	 \ge 
c_{0} \left\|\sum_{\xi \in \Xi} v_{\xi} \chi_{\xi}  \right\|^{2}_{W_{2}^{1}(\M)}
$,
	 and the metric equivalence $\|v\|_{W_2^1(\M)}^2 =  \|(1-\Delta)^{1/2}v\|_{L_2(\M)}$
gives
\begin{align*}
	 \vec{v} \cdot \stiffness{\Xi} \vec{v} 
	 \ge
	 c_0
	 \vec{v} \cdot 
	 \vec{L}
	 \vec{v},
	 \quad \text{where}
	 \quad
	 \vec{L}:=
	 \left( \langle  (1-\Delta)^{1/2}\chi_{\xi},(1-\Delta)^{1/2}\chi_{\eta}\rangle \right)_{\xi,\eta \in \Xi}
\end{align*}
is the stiffness matrix for the self-adjoint operator 
$
1-\Delta $. 
Because the spectrum of $1-\Delta$ is bounded below, i.e., $\sigma_{\mathrm{spec}}(1-\Delta)\subset [1,\infty)$,
we conclude that
$
	\vec{v} \cdot \vec{L} \vec{v} 
	\ge  
	\left\|\sum_{\xi \in \Xi} v_{\xi} \chi_{\xi}  \right\|^{2}_{L_{2}(\M)} 
	\ge  
	C_1 h^{d} \left\| \vec{v}\right\|^{2}_{\ell_{2}(\Xi)}
$
by the Riesz property (\ref{eq:riesz}).
Hence, overall we get 
\begin{align*}
	\|\stiffness{\Xi}\vec{v}\|_{\ell_2}
	\ge 
	 \frac{1}{\|\vec{v}\|_{\ell_{2}}}|\vec{v} \cdot \stiffness{\Xi} \vec{v}|
	 \ge %
	c
	h^{d} \left\| \vec{v}\right\|_{\ell_{2}}
\end{align*}
with $c = c_0 C_1$,
so $\|\stiffness{\Xi}^{-1}\|_{\ell_2\to\ell_2} \le \frac{1}{c_0C_1}h^{-d}$.
\end{proof}
Consequently, the $\ell_2$ condition number of $\stiffness{\Xi} $ is bounded by  a multiple of $h^{-2}$.
Please note that the bounds in Lemma \ref{lem:diag_decay} and Lemma \ref{lem:stiffness_inverse} 
do not assume the stiffness matrix to be symmetric, 
only these bounds are in some sense symmetric. 
Moreover, (\ref{eq:stiffnessupperbound}) is obtained by Riesz-Thorin interpolation and bounds on the row and column sums. Thus this bound also does not require the stiffness matrix to be symmetric.
\subsection{The diagonal of the stiffness matrix}

As a counterpart to the off-diagonal decay  given in Lemma \ref{lem:diag_decay}, we  can give the following
lower bounds on the diagonal entries.
\begin{lemma} \label{lem:stiffness_diagonal}
For an elliptic operator $\LL$ and a kernel satisfying Assumption 1, for a mesh ratio $\rho$,
there is $\Cdiag >0$ so that for any $\Xi\subset \M$ with $ h/q<\rho$, and any $\xi\in \Xi$,
we have
$$A_{\xi,\xi} = a(\chi_{\xi},\chi_{\xi}) \ge \Cdiag h^{d-2} .$$ 
\end{lemma}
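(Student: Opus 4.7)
\textbf{The plan is} to use coercivity to reduce to a lower bound on the Dirichlet energy of $\chi_\xi$, and then to exploit that $\chi_\xi$ oscillates by a fixed amount on a geodesic ball of radius $\sim h$ around $\xi$. By the energy-norm equivalence \cref{eq:energynorm}, $a(\chi_\xi,\chi_\xi) \ge c_0 \|\chi_\xi\|_{W_2^1(\M)}^2 \ge c_0 \|\nabla \chi_\xi\|_{L_2(\M)}^2$, so it suffices to show $\|\nabla \chi_\xi\|_{L_2(\M)}^2 \ge C h^{d-2}$.

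To identify an oscillation region for $\chi_\xi$, first observe that the H\"older estimate \cref{eq:holder} with $\chi_\xi(\xi)=1$ implies $\chi_\xi(y) \ge 3/4$ on $B(\xi, r_1 h)$, where $r_1 := (4 C_{\mathfrak{H}})^{-1/\epsilon}$ depends only on $C_{\mathfrak{H}}$ and $\epsilon$. The pointwise decay \cref{eq:ptwise_decay} then gives $|\chi_\xi(x)|\le 1/4$ whenever $\dist(x,\xi) \ge r_2 h$ for a large fixed $r_2$ depending only on $C_{pw}$ and $\nu$. Using the volume bounds $\alpha_{\M} r^d \le \mu(B(x,r))\le \beta_{\M} r^d$, one can then pick $r_2' > r_2$ so that the outer annulus $B(\xi,r_2' h)\setminus B(\xi, r_2 h)$ has Riemannian volume $\ge c h^d$. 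Set $A := B(\xi, r_2' h)$ and $m := \mu(A)^{-1} \int_A \chi_\xi\,d\mu$. A dichotomy on the value of $m$ shows $\int_A |\chi_\xi - m|^2\,d\mu \ge c' h^d$: if $m \ge 1/2$ then $|\chi_\xi - m|\ge 1/4$ throughout the outer annulus, and if $m < 1/2$ then $|\chi_\xi - m| \ge 1/4$ throughout $B(\xi, r_1 h)$; in either case the relevant subregion has volume $\gtrsim h^d$. Next, the Poincar\'e--Wirtinger inequality on the geodesic ball $A$, obtained by pullback through $\Exp_\xi$ and the local metric equivalence \cref{eq:expmap}, gives $\int_A |\chi_\xi - m|^2\,d\mu \le C (r_2' h)^2 \int_A |\nabla \chi_\xi|^2\,d\mu$, so combining yields $\|\nabla \chi_\xi\|_{L_2(\M)}^2 \ge \int_A |\nabla \chi_\xi|^2\,d\mu \gtrsim h^{d-2}$, completing the reduction.

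\textbf{The main obstacle} is justifying the Poincar\'e--Wirtinger inequality on these small geodesic balls with a constant that is uniform in $\xi\in\Xi$ and in $h$. This reduces, via $\Exp_\xi$ and \cref{eq:expmap}, to the scale-invariant Euclidean Poincar\'e--Wirtinger inequality on a Euclidean ball of radius $r_2' h$, which is valid once $h$ is small enough that $r_2' h$ lies below the injectivity radius of $\M$ (bounded below by compactness). With this in hand, the final $C_{diag}$ depends only on $\M$, $\rho$, $c_0$, $C_{\mathfrak{H}}$, $C_{pw}$, $\epsilon$, and $\nu$, as required.
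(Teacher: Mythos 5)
Your proof is correct, and it takes a genuinely different route from the paper's.

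The paper also begins by reducing to a lower bound on $\|\nabla\chi_\xi\|_{L_2}$ and pulling back through $\Exp_\xi$, but it then rescales to the unit ball $B(0,1)$, observes via the zeros estimate that the rescaled function $F$ has uniformly bounded $W_2^m(B(0,1))$-norm, and forms the constraint set
\[
K=\bigl\{G\in W_2^m(B(0,1))\mid \|G\|_{W_2^m}\le C_2,\ G(0)=1,\ G(e_1)=0\bigr\}.
\]
Using Banach--Alaoglu, the compactness of the embedding $W_2^m\hookrightarrow W_2^1$, and continuity of the Rayleigh-type quotient $G\mapsto\|\nabla G\|_{L_2}/\|G\|_{L_2}$, it extracts a strictly positive minimum $c>0$ over $K$, which serves as a tailor-made Poincar\'e constant for the normalized Lagrange function; Hölder continuity is then used only to bound $\|\chi_\xi\|_{L_2(B(\xi,r))}$ from below. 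Your argument sidesteps the compactness machinery entirely: you pin down an oscillation of $\chi_\xi$ over a band of width $\sim h$ (high near $\xi$ via the Hölder estimate \cref{eq:holder}, low on the outer annulus via the pointwise decay \cref{eq:ptwise_decay}), run a dichotomy on the mean, and invoke the classical scale-invariant Poincar\'e--Wirtinger inequality on a geodesic ball. This is more elementary and yields a constant with explicit dependence on $C_{\mathfrak H},C_{pw},\epsilon,\nu,\rho,\M$, whereas the paper's $c$ is non-constructive. The trade-offs are minor: you need the pointwise decay estimate in addition to Hölder continuity (the paper uses only the zeros estimate and Hölder continuity from \cref{assumption:lagrange_decay}), and one should take $r_2'\ge\max(r_1,r_2)$ and note, when pulling back, that the Riemannian mean $m$ minimizes $\int_A|\chi_\xi-c|^2\,d\mu$ over constants, so it can be replaced by the Euclidean mean of $f=\chi_\xi\circ\Exp_\xi$ before applying the Euclidean Poincar\'e--Wirtinger inequality -- a routine step you correctly flagged as the only point requiring care.
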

\begin{proof}
By coercivity of $a$, it suffices to prove 
 that
$\|\nabla \chi_{\xi}\|_{L_2(B(\xi,h))}^2\gtrsim h^{d-2}.$

 \normalmarginpar
We begin by establishing a Poincar{\'e}-type inequality which is valid for smooth Lagrange functions.
To this end, consider $f: B(0,\mathrm{r}_{\M})\to \R$  obtained by the change of variable $f (x)= \chi_{\xi}(\exp_{\xi}(x))$.
Note that for $B\subset B(\xi,\mathrm{r}_{\M})$, we have
for any $0\le k\le m$, that
 $$
 \|f\|_{W_2^k(B)} \le
 \frac{1}{\Gamma_1}
 \|\chi_{\xi}\|_{W_2^k(\M)} \le 
  \frac{1}{\Gamma_1}
 (CZ
h^{m-k} )(\Cen \rho^{m-d/2} h^{d/2-m})
=C_1h^{d/2-k}.
 $$
by the zeros estimate, with $C_1 := \frac{1}{\Gamma_1}\rho^{m-d/2}  \Cen \CZ$, where the constants stem from (\ref{eq:expmap}).
 Now let $r:=\dist(\xi,\Xi\setminus \{\xi\})$, and define $F:B(0,1)\to \R$ by
$F:= f(r\cdot)$. Then, by a change of variable,
 $$\|F\|_{W_2^m(B(0,1))}^2 
 =  \sum_{k=0}^m |F|_{W_2^k(B(0,1))}^2 
 =
 \sum_{k=0}^m r^{2k-d} |f|_{W_2^k(B(0,r))}^2  
 \le C_2$$ 
 with  $C_2 := \rho^d C_1$, since $h/\rho\le q\le r\le h$.
 
 Because of the imbedding 
 $W_2^m(B(0,1))\subset C(\overline{B(0,1)})$ 
(which holds since $m>d/2$),
 the set $$K:=\left\{G\in W_2^m(B(0,1))\mid  \|G\|_{W_2^m(B(0,1))} \le C_2, \, G(0)=1, G(e_1)=0 \right\}$$
 is well defined, closed and convex, hence weakly compact, by Banach-Alaoglu. 
 
 The natural imbedding $\iota: W_2^m(B(0,1))\to W_2^1(B(0,1))$,
 is compact. We wish to show that $\iota(K)$ is a compact set.
 
 Because $\iota$ is a continuous linear map, it is continuous between the weak topologies
 of $W_2^m(B(0,1))$ and $W_2^1(B(0,1))$. Thus  $\iota(K)$ 
 is weakly compact in $W_2^1(B(0,1))$,
 and thus norm closed. Finally, because $\iota(K)$ is complete and totally bounded, 
 it is a compact subset in the norm topology of $W_2^1(B(0,1))$.
  
Consider the  (possibly zero) constant 
$c$ defined by 
$$
c:=\min_{G\in K}  \frac{\|\nabla G\|_{L_2(B(0,1))}}{\|G\|_{L_2(B(0,1))}} .
$$

The map  $I:W_2^1(B(0,1))\to \R:G\mapsto \frac{\|\nabla G\|_{L_2}}{\|G\|_{L_2}}$ 
is  continuous on the  complement of $0\in W_2^1(B(0,1))$ 
(as quotient of two continuous functions that do not vanish).
 In particular, it is continuous  and non-vanishing on $\iota(K)$, 
 so $c = \min_{G\in \iota(K)} I(G)>0$.
 Indeed, $I(G)>0$ for all $G\in \iota(K)$, since $G(0)=1$, $G(e_1)=0$ and $G\in C^1(B)$.

Note that in the above minimization problem, 
the condition $G(e_1)=0$ could be replaced by any other point on the unit sphere 
without changing  the value of $c$.
By rotation invariance of the $W_2^m(\R^d)$ norm, it follows  that $\|\nabla F\|_{L_2} \ge c\|F \|_{L_2}$. Finally,
employing the change of variables $r^{d-2}\|\nabla F\|_{L_2(B(0,1))}^2= \|\nabla f\|_{L_2(B(0,r))}^2$
and $\| F\|_{L_2(B(0,1))}^2= r^d\|f\|_{L_2(B(0,r))}^2$, we have
\begin{eqnarray*}
\|\nabla \chi_{\xi}\|_{L_2(B(\xi,r))}^2
& \ge & {\Gamma_1^2}r^{d-2}\|\nabla F\|_{L_2(B(0,1))}^2\\
&\ge& c^2 {\Gamma_1^2}r^{d-2}\| F\|_{L_2(B(0,1))}^2\\
&\ge& c^2\Gamma_1^2\rho^{-2} h^{-2} \| \chi_{\xi}\|_{L_2(B(\xi,r))}^2.
\end{eqnarray*}
The last line follows because $\chi_{\xi}$ is  H{\"o}lder continuous, so stays close to $1$ near $\xi$.
Specifically,  by (\ref{eq:holder}), for $\kappa := \left(2  \CH \right)^{-1/\epsilon}$ 
we have $\chi_{\xi}(x)>\frac12$ for all  $x\in B(\xi, \kappa h)$.
Thus 
$$\| \chi_{\xi}\|_{L_2(B(\xi,r))}^2 \ge \int_{B(\xi, \kappa h)} |\chi_{\xi} (x)|^2\dif x \ge \frac14 \alpha_{\M} (\kappa h)^d.$$
The lemma follows with constant $\Cdiag= \frac14c^2\Gamma_1^2\rho^{-2} \alpha_{\M} \kappa^d$.
\end{proof}

This brings us to the lower bound for diagonal entries of the stiffness matrix. Define the diagonal of $\vec{A}_{\Xi}$ as
$\stiffnessprecon{\Xi}:=\mathrm{diag}(\stiffness{\Xi}),$
and note that by Lemma \ref{lem:diag_decay} and Lemma \ref{lem:stiffness_diagonal}, 
\begin{equation*}
\kappa(\stiffnessprecon{\Xi}) =\frac{\max_{\xi} A_{\xi,\xi}}{\min_{\xi} A_{\xi,\xi}} \le \frac{\Cstiff}{\Cdiag}
\end{equation*}
is bounded above by a constant which depends only on the mesh ratio $\rho$
(and not on $\Xi$).

This permits us to find suitable damping constants $0<\theta<1$ so that 
$\stiffnessprecon{\Xi}$ dominates $\theta \stiffness{\Xi}$.
This drives
the success of the damped Jacobi method
considered in the next section.

\begin{lemma}\label{lem:smoothing_condition}
For an elliptic operator $\LL$, a kernel $\phi$ satisfying Assumption 1, and mesh ratio $\rho$, there is $\theta\in(0,1)$ so that for any point set 
$\Xi\subset \M$, 
$\theta \langle \stiffness{\Xi} \vec{v},\vec{v}\rangle\le \langle \stiffnessprecon{\Xi}\vec{v},\vec{v}\rangle$ for all $\vec{v} \in \R^{\Xi}$.
\end{lemma}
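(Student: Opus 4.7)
The plan is to compare both sides to multiples of $\|\vec{v}\|_{\ell_2(\Xi)}^2$ and then choose $\theta$ as the ratio of the two constants. The key input is that the off-diagonal decay in \cref{lem:diag_decay} and the diagonal lower bound in \cref{lem:stiffness_diagonal} give matching $h$-dependence (both scale like $h^{d-2}$), so the ratio ends up being dimensionless and independent of $\Xi$.

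First, I would use \cref{lem:stiffness_diagonal} to lower bound the right-hand side: since $\stiffnessprecon{\Xi} = \diag(\stiffness{\Xi})$ is a diagonal matrix whose entries all satisfy $A_{\xi,\xi}\ge C_{diag}h^{d-2}$, we immediately get
\begin{equation*}
\langle \stiffnessprecon{\Xi}\vec{v},\vec{v}\rangle
= \sum_{\xi\in\Xi} A_{\xi,\xi}\, v_\xi^2 \ge C_{diag}\, h^{d-2}\, \|\vec{v}\|_{\ell_2(\Xi)}^2.
\end{equation*}

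For the left-hand side, I would use the Cauchy--Schwarz inequality together with the norm bound \cref{eq:stiffnessupperbound} obtained from row/column sums of \cref{lem:diag_decay}:
\begin{equation*}
\langle \stiffness{\Xi}\vec{v},\vec{v}\rangle
\le \|\stiffness{\Xi}\vec{v}\|_{\ell_2}\, \|\vec{v}\|_{\ell_2}
\le \|\stiffness{\Xi}\|_{2\to 2}\, \|\vec{v}\|_{\ell_2}^2
\le C_A\, h^{d-2}\, \|\vec{v}\|_{\ell_2(\Xi)}^2.
\end{equation*}
Combining the two estimates, any choice $\theta \le C_{diag}/C_A$ works; since $C_{diag}\le C_A$ (the diagonal entry of a matrix is bounded by its spectral norm), this $\theta$ lies in $(0,1)$, and both constants depend only on the kernel, the operator, and the mesh ratio $\rho$, not on $\Xi$ itself.

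I don't anticipate a real obstacle here — the substantive work has already been done in establishing \cref{lem:diag_decay} (off-diagonal decay) and especially \cref{lem:stiffness_diagonal} (the uniform lower bound on the diagonal). The only subtlety worth a remark is that when $\a_1\neq 0$ the stiffness matrix is non-symmetric, so $\langle \stiffness{\Xi}\vec{v},\vec{v}\rangle$ refers to the symmetric part; but because the quadratic form is bounded in absolute value by $\|\stiffness{\Xi}\|_{2\to 2}\|\vec{v}\|_{\ell_2}^2$ regardless of symmetry, the argument above goes through unchanged. (Coercivity additionally guarantees that this quadratic form is nonnegative, so the inequality is nontrivial.)
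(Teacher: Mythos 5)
Your argument is exactly the paper's: upper-bound $\langle \stiffness{\Xi}\vec{v},\vec{v}\rangle$ by $C_A h^{d-2}\|\vec{v}\|_{\ell_2}^2$ via \cref{eq:stiffnessupperbound}, lower-bound $\langle \stiffnessprecon{\Xi}\vec{v},\vec{v}\rangle$ by $C_{diag}h^{d-2}\|\vec{v}\|_{\ell_2}^2$ via \cref{lem:stiffness_diagonal}, and take $\theta\le C_{diag}/C_A$. The added remarks about why $\theta<1$ and the non-symmetric case are correct but go slightly beyond what the paper records.
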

\begin{proof}
By (\ref{eq:stiffnessupperbound}), $ \langle \stiffness{\Xi} \vec{v},\vec{v}\rangle\le C_Ah^{d-2} \|v\|^2$, while  
$\Cdiag h^{d-2} \|v\|^2 \le \langle \stiffnessprecon{\Xi}\vec{v},\vec{x}\rangle$
follows from Lemma \ref{lem:stiffness_diagonal}. Thus the lemma holds for any $\theta$ in the interval $ (0,{\Cdiag}/{C_A}]$.
\end{proof}

\section{The  smoothing property}\label{sec:smoothing_prop}
In this section we define and study the \emph{smoothing operator} used in the multigrid method. 
We focus on the damped Jacobi method for the linear system $\stiffness{\Xi} \vec{u}^{\star}_{\Xi}=\vec{b}$,
where $ \stiffness{\Xi} \in \R^{|\Xi| \times |\Xi| }$ and  $\vec{b}\in \R^{|\Xi|}$.
For  a fixed damping parameter $0< \theta <1$,
 $\vec{u}^{(j)}$ is approximately computed via the iteration:
$$
\vec{u}^{(j+1)} = \theta \stiffnessprecon{\Xi}^{-1} \vec{b}+
( \id - \theta \stiffnessprecon{\Xi}^{-1} \stiffness{\Xi})\vec{u}^{(j)}, j\ge 0,$$
with  starting value $\vec{u}^{(0)} \in \R^{|\Xi|}$.
Define the affine map governing a single iteration as
\begin{align*}
	{J}_{\Xi}: \R^{\Xi }\times \R^{|\Xi|} \to \R^{|\Xi|}, \quad 
	{J}_{\Xi}(\vec{x},\vec{b})  =
	 \left(  \id - \theta \stiffnessprecon{\Xi}^{-1} \stiffness{\Xi}\right) \vec{u} + \theta \stiffnessprecon{\Xi}^{-1}\vec{b}.
\end{align*} 

This shows that the iteration converges if and only if 
iteration matrix
\begin{align}\label{eq:smoothingmat}
\smoothingmat{\Xi} := \id -\theta \stiffnessprecon{\Xi}^{-1}\stiffness{\Xi},
\end{align}
called the {\em  smoothing matrix},
is a contraction.

In the context of kernel approximation,
the corresponding  operator,
the {\em smoothing operator}, 
 $\smoothingop{\Xi}: V_{\Xi} \to V_{\Xi}$ 
is defined
as 
 $
 \smoothingop{\Xi} \sigma_{\Xi}^* 
 := 
 \sigma_{\Xi}^* \smoothingmat{\Xi}  .
$
 The success of the multigrid method relies on a {\em smoothing property}, which for our 
 purposes states that iterating $\smoothingop{\Xi} $ is  eventually contracting:
 $ \|\smoothingop{\Xi}^{\nu}\|_{ L_2(\M)\to \LL}  \le C h^{-1} o(\nu)$ as $\nu\to \infty$. This 
 smoothing property is demonstrated in section \ref{subsec:smoothing}.
Many of the results in the forthcoming section are formulated both in a matrix including $\smoothingmat{\Xi}$ form 
and an operator form $\smoothingop{\Xi}$. This resembles a bit a change of basis transformation.

 \subsection{$L_2$ stability of the smoothing operator}
 At this point, we are in a position to show that that iterating this operator is stable on $L_2(\M)$.
 To help analyze the matrix $\smoothingmat{\Xi}=\id-\theta \stiffnessprecon{\Xi}^{-1} \stiffness{\Xi}$, 
 we introduce the  inner product
$$
\langle \vec{u},\vec{v}\rangle_{\stiffnessprecon{\Xi}}
:= 
\langle \stiffnessprecon{\Xi}^{1/2} \vec{u},\stiffnessprecon{\Xi}^{1/2} \vec{v}\rangle_{\ell_2(\R^{\Xi})}.
$$
Since $\stiffnessprecon{\Xi}$ is diagonal, and   its diagonal entries are 
$\langle \chi_{\xi},\chi_{\xi}\rangle_{\LL} \sim h^{d-2}$, we have
the  norm equivalence $\|\vec{M}\|_{\stiffnessprecon{\Xi}\to \stiffnessprecon{\Xi}} \sim \|\vec{M}\|_{2\to 2}$. Specifically, we have
\begin{equation} \label{eq:matrix_norm_equivalence}
\frac{1}{C_{\stiffnessprecon{}}}\|\vec{M}\|_{2\to 2}
\le
\|\vec{M}\|_{\vec{B}\to \vec{B}} \le   
C_{\stiffnessprecon{}}\|\vec{M}\|_{2\to 2}
\end{equation}
with constant of equivalence 
$C_{\stiffnessprecon{}} 
=  
\kappa(\stiffnessprecon{\Xi}^{1/2})
=\frac{\max \sqrt{a(\chi_\xi,\chi_\xi)}}{\min \sqrt{a(\chi_\xi,\chi_\xi)}} 
\le \sqrt{\Cstiff/\Cdiag}$.

  \begin{lemma}\label{lem:non-expansive}
For the damping parameter $\theta$ in Lemma  \ref{lem:smoothing_condition} 
and $\smoothingmat{\Xi}$ as defined in \eqref{eq:smoothingmat}, there is $C>0$ so that
  for all $\nu\in\N$, $\vec{u} \in \R^{\Xi}$  and $u=\sigma_{\Xi}^{*} \vec{u}\in V_{\Xi}$,
  $$
  \|\smoothingmat{\Xi}^{\nu}\|_{2\to 2}
  \le 
  \kappa(\stiffnessprecon{\Xi}^{1/2}),
  \quad \text{and} \quad
  \|\smoothingop{\Xi}^{\nu} u\|_{L_2(\M)} 
   \le  \frac{C_2}{C_1}\kappa(\stiffnessprecon{\Xi}^{1/2})  \|u\|_{\crchange{L_{2}(\M)}}.$$
  \end{lemma}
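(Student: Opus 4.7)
The plan is to first establish non-expansivity of the smoothing matrix $\smoothingmat{\Xi}$ in the $\stiffnessprecon{\Xi}$-inner product, then transfer the bound back to $\ell_2$ via the matrix norm equivalence \cref{eq:matrix_norm_equivalence}, and finally pass to an $L_2(\M)$ bound for the function-space operator $\smoothingop{\Xi}$ using the Riesz property \cref{eq:riesz}.

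To establish non-expansivity, I conjugate $\smoothingmat{\Xi}$ by $\stiffnessprecon{\Xi}^{1/2}$: the matrix $\stiffnessprecon{\Xi}^{1/2}\smoothingmat{\Xi}\stiffnessprecon{\Xi}^{-1/2}$ equals $\id - \vec{T}$ with $\vec{T} := \theta\stiffnessprecon{\Xi}^{-1/2}\stiffness{\Xi}\stiffnessprecon{\Xi}^{-1/2}$, so $\|\smoothingmat{\Xi}\|_{\stiffnessprecon{\Xi}\to\stiffnessprecon{\Xi}} = \|\id - \vec{T}\|_{2\to 2}$. Rewriting the smoothing inequality of \cref{lem:smoothing_condition} under the substitution $\vec{v} = \stiffnessprecon{\Xi}^{-1/2}\vec{w}$ yields $\langle \vec{T}\vec{w},\vec{w}\rangle \le \|\vec{w}\|_2^2$, and coercivity of $a$ ensures $\langle \vec{T}\vec{w},\vec{w}\rangle \ge 0$. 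In the symmetric case $\vec{T}$ is self-adjoint with spectrum in $[0,1]$, so $\id - \vec{T}$ also has spectrum in $[0,1]$ and is immediately non-expansive. In the non-symmetric case the identity
$$\|(\id - \vec{T})\vec{w}\|_2^2 = \|\vec{w}\|_2^2 - 2\langle \vec{T}_{\mathrm{sym}}\vec{w},\vec{w}\rangle + \|\vec{T}\vec{w}\|_2^2$$
must instead be controlled by combining the smoothing condition with the upper bound $\|\stiffness{\Xi}\|_{2\to 2} \le C_A h^{d-2}$ from \cref{eq:stiffnessupperbound} and the diagonal lower bound of \cref{lem:stiffness_diagonal}, which together furnish $\|\vec{T}\|_{2\to 2} \le \theta C_A/C_{diag}$, small enough (for the $\theta$ of \cref{lem:smoothing_condition}, possibly after further shrinking) that $\|\vec{T}\vec{w}\|_2^2 \le 2\langle \vec{T}_{\mathrm{sym}}\vec{w},\vec{w}\rangle$.

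Once $\|\smoothingmat{\Xi}\|_{\stiffnessprecon{\Xi}\to\stiffnessprecon{\Xi}} \le 1$ is in hand, submultiplicativity gives $\|\smoothingmat{\Xi}^n\|_{\stiffnessprecon{\Xi}\to\stiffnessprecon{\Xi}} \le 1$ for every $n$, and \cref{eq:matrix_norm_equivalence} delivers the first bound $\|\smoothingmat{\Xi}^n\|_{2\to 2} \le \kappa(\stiffnessprecon{\Xi}^{1/2})$. For the function-space bound, iterating the intertwining relation $\smoothingop{\Xi}\sigma_{\Xi}^* = \sigma_{\Xi}^*\smoothingmat{\Xi}$ yields $\smoothingop{\Xi}^n u = \sigma_{\Xi}^*\smoothingmat{\Xi}^n\vec{u}$, and applying \cref{eq:riesz} once as an upper bound on $\|\smoothingop{\Xi}^n u\|_{L_2}$ and once as a lower bound on $\|u\|_{L_2}$ produces
$$\|\smoothingop{\Xi}^n u\|_{L_2} \le C_2 q^{d/2}\|\smoothingmat{\Xi}^n\vec{u}\|_{\ell_2} \le C_2 q^{d/2}\kappa(\stiffnessprecon{\Xi}^{1/2})\|\vec{u}\|_{\ell_2} \le \tfrac{C_2}{C_1}\kappa(\stiffnessprecon{\Xi}^{1/2})\|u\|_{L_2}.$$

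The main obstacle is the non-symmetric case of the first step, where the skew part of $\stiffness{\Xi}$ arising from the convection term $\a_1\nabla$ defeats the simple spectral argument available for self-adjoint $\vec{T}$; once that is handled, the remaining steps are a direct conjugation followed by the norm transfer lemmas already recorded in the excerpt.
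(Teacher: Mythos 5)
Your symmetric-case argument reproduces the paper's proof step for step: conjugate by $\stiffnessprecon{\Xi}^{1/2}$, use \cref{lem:smoothing_condition} and coercivity to place the spectrum of the (symmetric) conjugated matrix in $[0,1]$, obtain $\|\smoothingmat{\Xi}^n\|_{\stiffnessprecon{\Xi}\to\stiffnessprecon{\Xi}}\le 1$, then transfer to $\ell_2$ via \cref{eq:matrix_norm_equivalence} and to $L_2(\M)$ via the Riesz property \cref{eq:riesz}. You are also right to flag that the conjugated matrix ceases to be symmetric when $a$ is not; the paper's own proof, which asserts symmetry of that matrix, only covers the symmetric case even though the remark preceding it claims more.

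Your proposed repair of the non-symmetric case does not close, however. To deduce $\|\id-\vec{T}\|_{2\to 2}\le 1$ from your identity you need $\|\vec{T}\vec{w}\|_2^2\le 2\langle\vec{T}_{\mathrm{sym}}\vec{w},\vec{w}\rangle$ for all $\vec{w}$, which after dividing through requires a lower bound on $\langle\vec{T}_{\mathrm{sym}}\vec{w},\vec{w}\rangle/\|\vec{w}\|_2^2$ uniform in $h$. But $\vec{T}_{\mathrm{sym}}=\theta\,\stiffnessprecon{\Xi}^{-1/2}\bigl(\tfrac12(\stiffness{\Xi}+\stiffness{\Xi}^T)\bigr)\stiffnessprecon{\Xi}^{-1/2}$, and combining the coercivity bound from \cref{lem:stiffness_inverse} with the diagonal bound $\stiffnessprecon{\Xi}\le C_{stiff}h^{d-2}\id$ gives only $\langle\vec{T}_{\mathrm{sym}}\vec{w},\vec{w}\rangle\gtrsim\theta h^2\|\vec{w}\|_2^2$ --- the factor $h^2$ is precisely the $\sim h^{-2}$ condition number of the diagonally preconditioned stiffness matrix that makes multigrid necessary in the first place. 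Since $\|\vec{T}\|_{2\to 2}\lesssim\theta$, the ratio $\|\vec{T}\vec{w}\|_2^2/\langle\vec{T}_{\mathrm{sym}}\vec{w},\vec{w}\rangle$ can be of size $\theta h^{-2}$, which no fixed $\theta>0$ controls as $h\to 0$; and an $h$-dependent damping parameter is not available, because the smoothing bound in \cref{lem:symmetric_smoothing} degrades like $\theta^{-1/2}$. So the non-symmetric half requires a genuinely different mechanism --- for instance, a pointwise bound of the skew part of $\stiffness{\Xi}$ by its symmetric part, which would amount to an extra assumption on the convection coefficient $\a_1$ --- and neither your proposal nor the paper's proof supplies it.
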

  We note that this holds even when $a$ is non-symmetric.
  \begin{proof}
 The matrix 
  $\stiffnessprecon{\Xi}^{1/2} (\id-\theta \stiffnessprecon{\Xi}^{-1}\stiffness{\Xi}) \stiffnessprecon{\Xi}^{-1/2}$ has the same spectrum as 
  the matrix
  $\id-\theta \stiffnessprecon{\Xi}^{-1}\stiffness{\Xi}$. 
  Furthermore, because 
$0\le \langle \stiffnessprecon{\Xi}-\theta \stiffness{\Xi}\vec{x},\vec{x}\rangle \le  \langle \stiffnessprecon{\Xi}\vec{x},\vec{x}\rangle$,
  the spectral radius of $\smoothingmat{\Xi}$ is no greater than $1$. 
  Thus 
  $$
  \|\smoothingmat{\Xi}\|_{\stiffnessprecon{\Xi} \to \stiffnessprecon{\Xi}} 
  = 
  \|\stiffnessprecon{\Xi}^{1/2} (\id-\theta \stiffnessprecon{\Xi}^{-1}\stiffness{\Xi}) \stiffnessprecon{\Xi}^{-1/2}\|_{2\to2}
  \le 
  1.
  $$
  It follows that $\|\smoothingmat{\Xi}^{\nu}\|_{\stiffnessprecon{\Xi}\to \stiffnessprecon{\Xi}} \le 1$ as well, and the matrix norm equivalence
  (\ref{eq:matrix_norm_equivalence}) guarantees that $\|\smoothingmat{\Xi}^{\nu}\|_{2\to 2} \le \kappa(\stiffnessprecon{\Xi}^{1/2})$.
  The second inequality follows from the Riesz property (\ref{eq:riesz}).
  \end{proof}

\subsection{Smoothing properties}\label{subsec:smoothing}
For $v=\sigma_{\Xi}^* \vec{v}\in V_{\Xi}$, we have $[v]_{\LL} = \sqrt{a(v,v)} = \sqrt{ \langle \stiffness{\Xi}\vec{v},\vec{v}\rangle}$.
By applying Cauchy-Schwarz, the Riesz property  and Lemma \ref{lem:non-expansive}, the chain of inequalities
$$
   [\smoothingop{\Xi}^{\nu} v ]_{\LL} 
   \le 
   \|\stiffness{\Xi}\smoothingmat{\Xi}^{\nu} \vec{v}\|_{\ell_2}^{1/2}
   \|\smoothingmat{\Xi}^{\nu} \vec{v}\|_{\ell_2}^{1/2}
   \le 
   C h^{-d/4}   
   \|\stiffness{\Xi}\smoothingmat{\Xi}^{\nu} \vec{v}\|_{\ell_2}^{1/2}
   \|v\|_{L_2}^{1/2}
$$
holds. In the  case that $a$ is symmetric, we have the following smoothness property.
%
%
\begin{lemma}
\label{lem:symmetric_smoothing}
For a given $\rho>0$ there is a constant $C$ so that for any $\theta$ chosen as in Lemma \ref{lem:smoothing_condition}, 
$\smoothingmat{\Xi}$ 
as defined in \eqref{eq:smoothingmat}, 
and $\Xi\subset \M$ with mesh ratio 
$\rho(\Xi)\le \rho$, the damped Jacobi iteration has smoothing operator $\smoothingop{\Xi}\in L(V_{\Xi})$ which satisfies
$$\|\smoothingop{\Xi}^{\nu} v\|_{\LL} \le 
{C}{\theta}^{-1/2}  \sqrt{ \frac{1}{\nu+1} }
h^{-1}\|v\|_{L_2}.
$$
\end{lemma}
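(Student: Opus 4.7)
The plan is to continue from the chain of inequalities already displayed immediately before the lemma. That chain reduces the problem to controlling the quantity $\|\stiffness{\Xi}\smoothingmat{\Xi}^n \vec{v}\|_{\ell_2}$, so the core task is to produce a decaying-in-$n$ bound of the correct size for this quantity, which is exactly where symmetry of $a$ is used.

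The main step is a spectral argument. In the symmetric case $\stiffness{\Xi}$ is symmetric positive definite, so I would conjugate by $\stiffnessprecon{\Xi}^{1/2}$ and work with the symmetric matrix $\vec{M} := \stiffnessprecon{\Xi}^{-1/2}\stiffness{\Xi}\stiffnessprecon{\Xi}^{-1/2}$. By \cref{lem:smoothing_condition} we have $\theta \stiffness{\Xi} \le \stiffnessprecon{\Xi}$, hence $\mathrm{spec}(\vec{M})\subset (0,1/\theta]$. A direct computation gives the identity
\begin{equation*}
\stiffness{\Xi}\smoothingmat{\Xi}^n
=\stiffnessprecon{\Xi}^{1/2}\,\vec{M}(\id-\theta \vec{M})^n\,\stiffnessprecon{\Xi}^{1/2}.
\end{equation*}
Since $\vec{M}$ is symmetric, $\|\vec{M}(\id-\theta\vec{M})^n\|_{2\to 2}$ equals its spectral radius, which I bound by the elementary one-variable estimate $\max_{0\le t\le 1/\theta} t(1-\theta t)^n \le \frac{1}{\theta(n+1)}$ (critical point at $t^*=1/((n+1)\theta)$, using $(1-1/(n+1))^n\le 1$). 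Combining this with $\|\stiffnessprecon{\Xi}^{1/2}\|_{2\to 2}^2=\max_\xi A_{\xi,\xi}\le C_{stiff}\,h^{d-2}$ from \cref{lem:diag_decay} yields
\begin{equation*}
\|\stiffness{\Xi}\smoothingmat{\Xi}^n\|_{2\to 2} \;\le\; \frac{C\,h^{d-2}}{\theta(n+1)}.
\end{equation*}

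To finish, I apply the Riesz lower bound \cref{eq:riesz} to convert $\|\vec{v}\|_{\ell_2}\le C h^{-d/2}\|v\|_{L_2}$, so that
\begin{equation*}
\|\stiffness{\Xi}\smoothingmat{\Xi}^n\vec{v}\|_{\ell_2}^{1/2} \;\le\; C\,\theta^{-1/2}(n+1)^{-1/2}\,h^{(d-4)/4}\,\|v\|_{L_2}^{1/2}.
\end{equation*}
Plugging this into the pre-stated chain $\|\smoothingop{\Xi}^n v\|_{\LL} \le C h^{-d/4}\,\|\stiffness{\Xi}\smoothingmat{\Xi}^n\vec{v}\|_{\ell_2}^{1/2}\,\|v\|_{L_2}^{1/2}$ causes the $h^{d/4}$ powers to cancel cleanly, leaving the advertised $h^{-1}$ and the $\theta^{-1/2}(n+1)^{-1/2}$ factor.

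I expect the main obstacle to be routine but technical: ensuring the $h$-bookkeeping at every step (the $h^{d-2}$ from the diagonal, $h^{-d/2}$ from Riesz, and $h^{-d/4}$ from the preceding chain) indeed balances to produce precisely $h^{-1}$, and verifying that the constant $C$ depends only on $\rho$, the kernel, and the coefficients of $\LL$, not on $\Xi$. Symmetry of $a$ is used only to invoke spectral radius $=$ operator norm for $\vec{M}(\id-\theta\vec{M})^n$; if one drops symmetry, this identification fails and only a weaker $o(n)$-type bound of the kind alluded to earlier in the section remains available.
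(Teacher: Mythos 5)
Your argument is correct and follows essentially the same route as the paper: reduce to the operator norm of $\stiffness{\Xi}\smoothingmat{\Xi}^n$, bound that by $C h^{d-2}/(\theta(n+1))$, then convert via the Riesz property and the pre-stated chain. The only difference is that the paper obtains the matrix bound by citing a standard result (Theorem 7.9 of the Reusken lecture notes), whereas you prove it directly via the conjugation to $\vec{M}=\stiffnessprecon{\Xi}^{-1/2}\stiffness{\Xi}\stiffnessprecon{\Xi}^{-1/2}$, the spectral-radius identification available for symmetric matrices, and the elementary optimization $\max_{0\le t\le 1/\theta} t(1-\theta t)^n\le 1/(\theta(n+1))$ -- which is exactly the content of that cited theorem, so this is a filled-in citation rather than a different argument.
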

\begin{proof}
This is a result of \cite[Theorem 7.9]{reuskenLN}, which shows that 
$ 
\|\stiffness{\Xi}\smoothingmat{\Xi}^{\nu} \vec{v}\|_{\ell_2}
\le  
\frac{Ch^{d-2}}{\theta(\nu+1)}\|\vec{v}\|_{\ell_2}
$.
It follows that 
$\|\stiffness{\Xi}\smoothingmat{\Xi}^{\nu} \vec{v}\|_{\ell_2}\le \frac{Ch^{d/2-2}}{\theta(\nu+1)}\|{v}\|_{L_2}$, 
and the result holds by the above discussion.
\end{proof}

When $a$ is not symmetric, we have the  smoothness property.

 \begin{lemma}\label{lem:non-symmetric_smoothing}
 Let $\smoothingop{\Xi}:V_{\Xi}\to V_{\Xi}: v=\sum v_{\xi} \chi_{\xi}  \mapsto \sum (\smoothingmat{\Xi}\vec{v})_{\xi} \chi_{\xi}$. 
 For  $\theta$ as in Lemma \ref{lem:smoothing_condition} and $\smoothingmat{\Xi}$ 
 as defined in \eqref{eq:smoothingmat} we have
 $$[\smoothingop{\Xi}^{\nu} v]_{\LL} \le \frac{C}{\sqrt[4]{\nu}}h^{-1}\|v\|_{L_2}.$$
 \end{lemma}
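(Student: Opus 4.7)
The plan is to splice together the Cauchy--Schwarz inequality already derived just above the statement of the lemma, namely
\[
\|\smoothingop{\Xi}^{\nu} v\|_{\LL} \;\le\; Ch^{-d/4} \,\|\stiffness{\Xi}\smoothingmat{\Xi}^{\nu}\vec{v}\|_{\ell_2}^{1/2}\,\|v\|_{L_2}^{1/2},
\]
with a non-symmetric analogue of the ``$1/(\nu+1)$'' estimate used in \cref{lem:symmetric_smoothing}. Specifically, I would show the weaker (but still sufficient) bound
\[
\|\stiffness{\Xi}\smoothingmat{\Xi}^{\nu}\|_{\ell_2\to\ell_2} \;\le\; \frac{Ch^{d-2}}{\theta\sqrt{\nu+1}}.
\]
Given this, the Riesz property \cref{eq:riesz} in the form $\|\vec{v}\|_{\ell_2}\le Ch^{-d/2}\|v\|_{L_2}$ combined with bookkeeping of the $h$-powers ($-\tfrac{d}{4}+\tfrac{d-2}{4}-\tfrac{d}{4}=-1$) immediately produces the advertised $h^{-1}\nu^{-1/4}$ bound.

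To establish the key $1/\sqrt{\nu+1}$ estimate I would exploit the factorisation
\[
\stiffness{\Xi}\smoothingmat{\Xi}^{\nu}
\;=\; \tfrac{1}{\theta}\stiffnessprecon{\Xi}(\id-\smoothingmat{\Xi})\smoothingmat{\Xi}^{\nu}
\;=\; \tfrac{1}{\theta}\stiffnessprecon{\Xi}^{1/2}\bigl(\widetilde{\vec M}^{\nu}-\widetilde{\vec M}^{\nu+1}\bigr)\stiffnessprecon{\Xi}^{1/2},
\]
where $\widetilde{\vec M}:=\stiffnessprecon{\Xi}^{1/2}\smoothingmat{\Xi}\stiffnessprecon{\Xi}^{-1/2}$. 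By the proof of \cref{lem:non-expansive}, $\widetilde{\vec M}$ is a genuine $\ell_2$-contraction: its symmetric conjugate has the same spectrum as $\smoothingmat{\Xi}$, whose spectral radius is $\le 1$ thanks to \cref{lem:smoothing_condition}. The matrix $\frac{1}{2}(\widetilde{\vec M}+\widetilde{\vec M}^{*})$ is furthermore positive definite because the bilinear form $a$ is coercive (so $\tfrac{1}{2}(\stiffness{\Xi}+\stiffness{\Xi}^{T})$ is positive definite). For such a power-bounded, numerically-dissipative Hilbert-space operator the classical Nevanlinna--Kreiss estimate $\|T^{\nu}(\id-T)\|\le C/\sqrt{\nu+1}$ applies, yielding $\|\widetilde{\vec M}^{\nu}-\widetilde{\vec M}^{\nu+1}\|_{\ell_2\to\ell_2}\le C/\sqrt{\nu+1}$. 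Combined with the elementary bound $\|\stiffnessprecon{\Xi}^{1/2}\|_{\ell_2\to\ell_2}^{2}=\|\stiffnessprecon{\Xi}\|_{\ell_2\to\ell_2}\le Ch^{d-2}$ (from \cref{lem:diag_decay} applied to the diagonal) this delivers the desired estimate.

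The main obstacle is the Nevanlinna--Kreiss step: unlike the symmetric case, where Reusken's Theorem 7.9 is directly quotable and comes from a simple spectral argument on $[0,1]$, here one needs the $1/\sqrt{\nu+1}$ decay for non-self-adjoint contractions, which is a substantially more delicate fact (classically it is proved by a contour argument comparing $\lambda^{\nu}(1-\lambda)$ with the Cauchy integral over the unit circle together with a power-bound on $\widetilde{\vec M}$). A secondary bookkeeping subtlety is that $\smoothingmat{\Xi}$ itself is only a contraction in the $\stiffnessprecon{\Xi}$-inner product and not in $\ell_2$, which is precisely why the similarity transform by $\stiffnessprecon{\Xi}^{1/2}$ is introduced; the norm equivalence \cref{eq:matrix_norm_equivalence} (with constant controlled by $\kappa(\stiffnessprecon{\Xi}^{1/2})$, uniformly in $\Xi$ via \cref{lem:diag_decay} and \cref{lem:stiffness_diagonal}) ensures that the similarity does not cost an $h$-dependent factor. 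Apart from these two points, everything else is a direct chaining of estimates already stated in the excerpt.
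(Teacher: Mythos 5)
Your high-level plan matches the paper's: both reduce the lemma to the operator-norm estimate
\[
\|\stiffness{\Xi}\smoothingmat{\Xi}^{\nu}\|_{\ell_2\to\ell_2}\;\lesssim\;\frac{h^{d-2}}{\theta\sqrt{\nu+1}},
\]
feed this into the Cauchy--Schwarz chain stated above the lemma, and do the $h$-bookkeeping via the Riesz property. Your bookkeeping is correct. The difference is that the paper simply cites \cite[Theorem~7.17]{reuskenLN} for the key operator-norm bound, while you try to supply the underlying argument --- which is a reasonable ambition, but your version has a genuine gap.

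The gap is in the ``Nevanlinna--Kreiss step.'' The estimate $\|T^{\nu}(\id-T)\|\le C/\sqrt{\nu+1}$ is \emph{not} a consequence of power-boundedness together with the numerical-range condition $0\le\mathrm{Re}\,\langle(\id-T)\vec{x},\vec{x}\rangle\le\|\vec{x}\|^{2}$. A concrete counterexample is the $2\times 2$ plane rotation $\vec T=R_{\theta}$ with small $\theta>0$: it is orthogonal (so $\|\vec T^{\nu}\|=1$ for all $\nu$, a genuine contraction), the symmetric part of $\id-\vec T$ is $(1-\cos\theta)\id\ge 0$ and $\le\id$ (so all the hypotheses you state are satisfied, including positive-definiteness of $\tfrac12(\vec T+\vec T^{*})$), and yet $\|\vec T^{\nu}(\id-\vec T)\|=\|\id-\vec T\|=2|\sin(\theta/2)|$ is constant in $\nu$. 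The decay estimate you invoke is a \emph{Ritt-operator} estimate; it requires the spectrum to be in a Stolz sector at $1$ (equivalently a resolvent bound $\|(T-\lambda)^{-1}\|\le C/|\lambda-1|$), and that extra hypothesis is exactly what your argument does not verify --- and in fact cannot be verified for a generic accretive $\vec N=\id-T$.

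What is missing is the specific structure of the non-symmetric part of the stiffness matrix: the antisymmetric part of $\stiffness{\Xi}$ comes from the first-order term $\a_1\nabla$ and is subordinate to the symmetric part (it corresponds to one derivative fewer, and Cauchy--Schwarz gives a bound of the form $|\langle\stiffness{\Xi}^{\text{anti}}\vec x,\vec y\rangle|\lesssim\langle\stiffness{\Xi}^{\text{sym}}\vec x,\vec x\rangle^{1/2}\langle\stiffness{\Xi}^{\text{sym}}\vec y,\vec y\rangle^{1/2}$). It is this perturbative control of the skew part, not raw dissipativity, that Reusken's Theorem~7.17 exploits, and it is what the paper refers to when it says the lemma follows ``by techniques of the proof of \cref{lem:symmetric_smoothing}.'' A secondary, related issue: even if you tried to patch the argument by using strict accretivity of $\vec N$, the lower bound on its symmetric part degenerates like $\theta h^{2}$ (since $\lambda_{\min}(\stiffness{\Xi})\sim h^{d}$ while $\stiffnessprecon{\Xi}\sim h^{d-2}$), so you would pick up an $h$-dependent constant and lose the uniform-in-$\Xi$ estimate that the multigrid proof requires. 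So the repair really does need the subordination of the skew part, not just sign information.
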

\begin{proof}
This follows from \cite[Theorem 7.17]{reuskenLN}, and by techniques of the proof of Lemma \ref{lem:symmetric_smoothing}.
\end{proof}

 \section{The direct kernel multigrid method}\label{sec:direct_mgm}
  We are now in a position to consider the multigrid method applied to the kernel based Galerkin method
 we have described in the previous sections.

 \normalmarginpar
 \subsection{Setup: Grid transfer}
 We consider a nested sequence of point sets 
\begin{align*}
\Xi_0 \subset \Xi_1 \subset \dots \Xi_{\ell} \subset \Xi_{\ell+1} \subset \dots \subset  \Xi_{L} \subset \M
\end{align*}
and associated kernel spaces $V_{\Xi_{\ell}}$ as described in section \ref{sec:Kernel_Setup}.
We denote the Lagrange basis for each such space 
$	(\chi^{(\ell)}_{\xi} )_{ \xi \in \Xi_{\ell} }$,
and with it the accompanying analysis map $\sigma_{\ell}:=\sigma_{\Xi_{\ell}}$, 
synthesis map $\sigma_{\ell}^*:=\sigma_{\Xi_{\ell}}^*$ and stiffness matrix $\vec{A}_{\ell}:=\vec{A}_{\Xi_{\ell}}$.
Moreover, we assume that there are constants $0< \gamma_1\le \gamma_2 <1$ and $\rho\ge 1$ such that
\begin{align}\label{eq:gridconst}
q_{\Xi_{\ell}}\le  h_{\Xi_{\ell}} \le \rho q_{\Xi_{\ell}}, \quad \text{and} \quad \gamma_1 h_{\Xi_{\ell}}\le h_{\Xi_{\ell+1}}\le \gamma_2 h_{\Xi_{\ell}}.
\end{align}
Note at this point that $\rho$ is a universal constant. 
Hence $\rho$ does not depend on $\ell$ and thus the constants in deriving the smoothing property do not depend on $\ell$. 
Thus, we obtain $ n_{\ell} \sim h^{-d}_{\Xi_{\ell}}$, for constants see (\ref{eq:numberofpoints}).
We will assume that $L$ is the largest index that we will consider.

 In this section, we discuss {\em grid transfer}: specifically, the operators and matrices which provide communication 
 between finite dimensional
 kernel spaces. These include natural {\em prolongation} and {\em restriction} maps and their corresponding matrices.
We show how these can be used to relate Galerkin projectors $P_{\Xi_{\ell}}$ and stiffness matrices $\vec{A_{\ell}}$.

\paragraph{Prolongation and restriction}
Denote the Lagrange basis of $V_{\Xi_{\ell}}$ by $(\chi_{\xi}^{(\ell)})$,
and note that by containment $V_{\Xi_{\ell-1}}\subset V_{\Xi_{\ell}}$, it follows that
 $ \chi^{(\ell-1)}_{\xi}=\sum_{\eta \in \Xi_{\ell}} \beta_{\xi,\eta} \chi^{(\ell)}_{\eta} $
 holds for some matrix of coefficients $\beta_{\xi,\ell}$.
Furthermore, from the Lagrange property,  the identity
$
	\chi^{(\ell-1)}_{\xi}(\eta) 
	=\sum_{\zeta \in \Xi_{\ell}} \chi^{(\ell-1)}_{\xi}(\zeta)  \chi^{(\ell)}_{\zeta} (\eta) 
$
\change{holds} for any  $\eta \in \Xi_{\ell-1}$.
By uniqueness, we  deduce that $\beta_{\xi,\eta}=\chi^{(\ell-1)}_{\xi}(\eta)$,
and we have
\begin{align*}
	\sum_{\xi \in \Xi_{\ell-1}} c_{\xi}\chi^{(\ell-1)}_{\xi}
	= 
	\sum_{\xi \in \Xi_{\ell-1}}c_{\xi}\sum_{\eta \in \Xi_{\ell}} \chi^{(\ell-1)}_{\xi}(\eta)  \chi^{(\ell)}_{\eta} 
	=
	 \sum_{\eta \in \Xi_{\ell}} \sum_{\xi \in \Xi_{\ell-1}}c_{\xi} \chi^{(\ell-1)}_{\xi}(\eta) \chi^{(\ell)}_{\eta} .
\end{align*}
This yields that the natural injection $\prolop{\ell}: V_{\Xi_{\ell-1}}\to V_{\Xi_{\ell}}$,
called  the {\em prolongation map},  which is described by the rectangular matrix 
$$\prolmat{\ell}:= 
	\left(\chi^{(\ell-1)}_{\xi}(\eta) \right)_{\xi \in \Xi_{\ell-1}, \eta\in \Xi_{\ell}}
	= 
	(\sigma_{\ell}^*)^{-1}\sigma_{\ell-1}^*\ \in \R^{n_{\ell} \times n_{\ell-1}}.$$
It is worth noting that $ \I_{\ell-1}^{\ell}\sigma_{\ell-1}^*
= \sigma_{\ell-1}^*$, so we have
the identity
\begin{equation}\label{eq:synthesis_identity}
\sigma_{\ell}^* \prolmat{\ell}
= \sigma_{\ell-1}^*.
\end{equation}

The corresponding {\em restriction map }
$\resop{\ell}: V_{\Xi_{\ell}}\to V_{\Xi_{\ell-1}}$
is described by the transposed matrix $\resmat{\ell}=\left(\prolmat{\ell}\right)^T$.
In other words, it is defined as
$ \resop{\ell}\sigma_{\ell}^* =\sigma_{\ell-1}^* \left(\prolmat{\ell} \right)^T$.

Note that we can use $\resmat{\ell}=\left(\prolmat{\ell}\right)^T$ to relate analysis maps at different levels, since
we can take the $a$-adjoint of both sides of the equation
(\ref{eq:synthesis_identity})
to obtain the following useful identity:
\begin{equation}
\label{eq:analysis_identity}
\sigma_{\ell-1} = \left(\prolmat{\ell} \right)^T\sigma_{\ell}.
\end{equation}
Moreover, the prolongation is both bounded from above and below. This is a kernel based analogue for \cite[Eq. (64)]{reuskenLN}.
\begin{lemma}
Using the notation from above, there 
is a constant $C_{pro}\ge 1$
depending on $\gamma$, $\rho$, $\M$ and the constants in Assumption 2 so that 
\begin{align}
	\label{eq:prologantionmatrixbound}
	1&\le 
	  \left\|\prolmat{\ell}  \right\|_{\ell_2\left(\Xi_{\ell-1}\right)\to \ell_2\left(\Xi_{\ell}\right) } 
	\le C_{pro}
\end{align}
holds for all $\ell \ge 1$.
\end{lemma}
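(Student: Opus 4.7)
The plan is to obtain the two bounds separately, exploiting two complementary structural facts: nesting $\Xi_{\ell-1}\subset \Xi_{\ell}$ gives the lower bound almost for free, and the Riesz property \cref{eq:riesz} applied at both levels, together with the quasi-uniformity and nestedness constants in \cref{eq:gridconst}, gives the upper bound.

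For the lower bound, I would fix $\vec c\in\R^{\Xi_{\ell-1}}$ and inspect the vector $\prolmat{\ell}\vec c$. By definition of the prolongation matrix its $\eta$-entry for $\eta\in\Xi_{\ell}$ is $\sum_{\xi\in\Xi_{\ell-1}} c_{\xi}\chi_{\xi}^{(\ell-1)}(\eta)$, i.e.\ the value at $\eta$ of the coarse function $f:=\sigma_{\ell-1}^{\ast}\vec c=\sum c_{\xi}\chi_{\xi}^{(\ell-1)}$. Since $\Xi_{\ell-1}\subset\Xi_{\ell}$, restricting this vector to $\Xi_{\ell-1}$ and using the Lagrange property $\chi_{\xi}^{(\ell-1)}(\eta)=\delta_{\xi,\eta}$ for $\eta\in\Xi_{\ell-1}$ recovers $\vec c$ exactly. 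Discarding the remaining entries in $\Xi_{\ell}\setminus\Xi_{\ell-1}$ can only decrease the $\ell_2$ norm, so
\[
 \|\prolmat{\ell}\vec c\|_{\ell_2(\Xi_{\ell})}^{2} \;\ge\; \sum_{\eta\in \Xi_{\ell-1}} |c_{\eta}|^{2}\;=\; \|\vec c\|_{\ell_2(\Xi_{\ell-1})}^{2}.
\]

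For the upper bound, I would use the identity $\sigma_{\ell}^{\ast}\prolmat{\ell}=\sigma_{\ell-1}^{\ast}$ from \cref{eq:synthesis_identity}, which says that $\prolmat{\ell}\vec c$ is the fine-level coefficient vector of the same function $f$. Applying the upper inequality in the Riesz property \cref{eq:riesz} at level $\ell$ to $\prolmat{\ell}\vec c$ and the lower inequality at level $\ell-1$ to $\vec c$ yields
\[
 C_{1}q_{\ell}^{d/2}\|\prolmat{\ell}\vec c\|_{\ell_2(\Xi_{\ell})} \;\le\; \|f\|_{L_2(\M)} \;\le\; C_{2}q_{\ell-1}^{d/2}\|\vec c\|_{\ell_2(\Xi_{\ell-1})}.
\]
It remains to absorb the ratio $q_{\ell-1}/q_{\ell}$ into a uniform constant. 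From \cref{eq:gridconst}, $q_{\ell}\ge h_{\ell}/\rho$ and $q_{\ell-1}\le h_{\ell-1}\le h_{\ell}/\gamma_{1}$, so $q_{\ell-1}/q_{\ell}\le \rho/\gamma_{1}$. Substituting this in gives the upper bound with $C_{pro}=(C_{2}/C_{1})(\rho/\gamma_{1})^{d/2}$, a constant independent of $\ell$.

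There is no real obstacle here: the one nonobvious point is that the dependence on $\ell$ in the Riesz constants $q_{\ell}^{d/2}$ cancels up to a ratio, and controlling that ratio is precisely what the quasi-uniformity hypothesis \cref{eq:gridconst} is designed for. I would include a brief comment that although $C_{1},C_{2}$ in \cref{eq:riesz} depend on the kernel via Assumption \ref{assumption:lagrange_decay}, they do not depend on the particular set $\Xi_{\ell}$, so the resulting $C_{pro}$ depends only on $\gamma_{1}$, $\rho$, $\M$ and the kernel constants, as claimed.
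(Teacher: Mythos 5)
Your proof is correct, and the lower bound is essentially the same argument the paper uses (the restriction of $\prolmat{\ell}\vec c$ to $\Xi_{\ell-1}$ is $\vec c$ itself by the Lagrange property, and throwing away the entries on $\Xi_{\ell}\setminus\Xi_{\ell-1}$ only shrinks the $\ell_2$-norm). For the upper bound, however, you take a genuinely different route. The paper works directly with the pointwise decay estimate \cref{eq:ptwise_decay}: it bounds the $\ell_1\to\ell_1$ and $\ell_\infty\to\ell_\infty$ norms of $\prolmat{\ell}$ via column and row sums (using \cref{eq:finite_sum}) and then interpolates to obtain the $\ell_2\to\ell_2$ bound. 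You instead exploit the fact that $\sigma_{\ell}^{\ast}\prolmat{\ell}\vec c$ and $\sigma_{\ell-1}^{\ast}\vec c$ represent the same $L_2$ function, sandwich that $L_2$ norm between the two Riesz estimates \cref{eq:riesz} at levels $\ell$ and $\ell-1$, and absorb the ratio $(q_{\ell-1}/q_{\ell})^{d/2}$ using \cref{eq:gridconst}. Both routes ultimately rest on the same localization hypothesis (the Riesz property is itself derived from the decaying Lagrange basis), but your version is shorter and more conceptual, since it leverages the already-established Riesz equivalence as a black box; the paper's version is more explicitly quantitative, producing a constant $C_{pw}\sqrt{(1+\tfrac{\beta_\M}{\alpha_\M}B)(1+\tfrac{\beta_\M}{\alpha_\M}A)}$ written directly in terms of the pointwise decay rate $\nu$, which is useful if one wants to track how $C_{pro}$ degrades with the kernel parameters. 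One small caveat worth stating explicitly, which you touched on: your argument needs the Riesz constants $C_1,C_2$ to be uniform over $\ell$, which the paper guarantees by fixing the mesh ratio $\rho$.
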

\begin{proof}
We begin by estimating the $\ell_1( \Xi_{\ell-1})\to \ell_1( \Xi_{\ell})$ and $\ell_{\infty}( \Xi_{\ell-1})\to \ell_{\infty}( \Xi_{\ell})$ norms of 
$\prolmat{\ell}$ by taking column and row sums, respectively. These estimates can be  made almost simultaneously, because
the $(\xi,\eta)$ entry of $\prolmat{\ell}$ satisfies the bound
$|\chi^{(\ell-1)}_{\xi}(\eta) |\le \Cpw \exp(-\nu \frac{\dist(\xi,\eta)}{h_{\ell-1}})$
by (\ref{eq:ptwise_decay}).

Let $A:=\sum_{n=1}^{\infty} (n+2)^d \exp(- \frac{\nu}{\rho}n)$ and $B:=\sum_{n=1}^{\infty} (n+2)^d \exp(- \frac{\nu\gamma}{\rho}n)$,
and note that both numbers depend on $\rho, \gamma$ and the exponential decay rate $\nu$ from (\ref{eq:ptwise_decay}).
Applying (\ref{eq:finite_sum}), gives
$$
\sum_{\xi\in\Xi_{\ell-1}} |\chi^{(\ell-1)}_{\xi}(\eta) |
\le
\Cpw
\left(1+\frac{\beta_{\M}}{\alpha_{\M}}A\right)
$$
and
$$
\sum_{\eta\in\Xi_{\ell}} |\chi^{(\ell-1)}_{\xi}(\eta) |
\le 
\Cpw
\left(1+\frac{\beta_{\M}}{\alpha_{\M}}B\right).
$$
Finally, interpolation gives the upper bound
 $$C_{pro}:= 
 \Cpw
  \sqrt{(1+\frac{\beta_{\M}}{\alpha_{\M}}B)(1+\frac{\beta_{\M}}{\alpha_{\M}}A)}.$$

By using the definition $\Upsilon_{\ell}:=\Xi_{\ell} \setminus\Xi_{\ell-1} $
we write $\vec{P}_a = (\chi_{\xi}(\eta))_{\xi,\eta\in\Xi_{\ell-1}}$ and 
$\vec{P}_b = (\chi_{\xi}(\zeta))_{\xi\in \Xi_{\ell-1},\zeta\in \Upsilon_{\ell}}$.
Thus, we have 
	$\left\| \prolmat{\ell}  \vec{c} \right\|^2_{2}=\|\vec{c}\|^{2}_{2}+\|\vec{P}_b\tilde{\vec{c}}\|^{2}_{2}\ge \|\vec{c}\|^{2}_{2}$,
which gives the lower bound.
\end{proof}

\subsection{multigrid iteration -- two level case}
We now describe the multigrid algorithm, which is a composition of smoothing operators, restriction, coarse grid correction,
prolongation, and then smoothing. 

We begin by considering the solution of $\stiffness{\ell}\vec{u}_{\ell}^{\star}=\vec{b}_\ell$, 
where $\stiffness{\ell}$ is the
stiffness matrix associated to $V_{\Xi_{\ell}}$,
and where
$\vec{b}_{\ell} = \sigma_{\ell}( u_{\ell}^{\star})$,
is the data obtained from the Galerkin solution
 $u_{\ell}^{\star} = \sigma^{\ast}_{\ell} (\vec{u}_{\ell}^{\star})\in  V_{\Xi_{\ell}}$.
 Naturally, $u_{\ell}^{\star}$ is unknown (its coefficients are the solution of the above problem),
 but
 we can compute the data 
 $\sigma_{\ell} {u}_{\ell}^{\star} $ via 
\begin{align*}	
\vec{b}_{\ell}&=
	\sigma_{\ell}(u_{\ell}^{\star}) \\
	&= 
	\left(a( u_{\ell}^{\star},\chi^{(\ell)}_{\xi_{1}}), \dots , a(u_{\ell}^{\star},\chi^{(\ell)}_{\xi_{n_{\ell}}})\right)^{T}\\
	&= \left(\langle f, \chi^{(\ell)}_{\xi_{1}}\rangle_{L_{2}(\M)}, \dots , \langle f,\chi^{(\ell)}_{\xi_{n_{\ell}}}\rangle_{L_{2}(\M)}\right)^{T} \!\!.
\end{align*}
In other words, it is obtained from the right hand side $f$.

\begin{algorithm}
\caption{$\mathrm{TGM}_{\ell}$ \newline Two-grid method  with post-smoothing in vectorial form, see \cite[Eq. (20)]{reuskenLN}}
\label{alg:mg_twogrid}
\begin{algorithmic}[0]
\REQUIRE  $\vec{u}^{\text{old}}_{\ell} \in \R^{n_{\ell}}$, right-hand-sides $\vec{b}_{\ell}=\sigma_{\ell} u^{\star}_{\ell} \in \R^{n_{\ell}}$
\ENSURE new approximation $\R^{n_{\ell}} \ni \vec{u}^{\text{new}}_{\ell}\gets \tgm_{\ell}(\vec{u}^{\text{old}}_{\ell},\vec{b}_{\ell} )$

\IF{$\ell=0$} 
	\STATE $\vec{u}^{\text{new}}_{0} \gets \vec{A}^{-1}_{0}\vec{b}_{0}$
	\COMMENT{for coarsest grid use direct solver}
\ELSE 
	\STATE $\vec{u}_{\ell} \gets {J}_{\ell}^{\nu_{1}}(\vec{u}^{\text{old}}_{\ell},\vec{b}_{\ell})$
	\COMMENT{$\nu_1$ steps pre-smoothing}
	\STATE  $\vec{d}_{\ell-1} \gets \resmat{\ell}\left( \vec{b}_{\ell}-\stiffness{\ell} \vec{u}_{\ell}\right)$ 
	\COMMENT{restrict residual to coarser grid}
	\STATE $\tilde{\vec{e}}_{\ell-1} \gets \vec{A}^{-1}_{\ell-1} \vec{d}_{\ell-1}$ 
	\COMMENT{solve coarse grid problem}
	\STATE $\vec{u}_{\ell} \gets \vec{u}_{\ell}+ \prolmat{\ell}\tilde{\vec{e}}_{\ell-1}$ 
	\COMMENT{update with coarse grid correction}
	\STATE $\vec{u}^{\text{new}}_{\ell} \gets {J}_{\ell}^{\nu_{2}}(\vec{u}_{\ell},\vec{b}_{\ell} )$
	\COMMENT{$\nu_2$ steps post-smoothing}
\ENDIF
\RETURN $\vec{u}^{\text{new}}_{\ell}$

\end{algorithmic} 
\end{algorithm}

The output $\vec{u}^\text{new}_{\ell}=\tgm_{\ell}(\vec{u}^{\text{old}}_{\ell},\vec{b}_{\ell} )$
of the two-level multigrid algorithm 
with initial input $\vec{u}^{\text{old}}_{\ell}$ is given by the rather complicated formula
 \begin{equation}
 \label{eq:full_algorithm}
 \vec{u}^\text{new}_{\ell} = 
 {J}^{\nu_2}_{\ell}
 \left( {J}^{\nu_1}_{\ell}(\vec{u}^{\text{old}}_{\ell},\vec{b}_{\ell})
 + 
 \prolmat{\ell}\left(\stiffness{\ell-1}\right)^{-1}\resmat{\ell}
 \left(\vec{b}_{\ell}-\stiffness{\ell} {J}^{\nu_1}_{\ell}(\vec{u}^{\text{old}}_{\ell},\vec{b}_{\ell})\right),\vec{b}_{\ell}\right).
 \end{equation}

Because $\vec{u}\mapsto \mathrm{TGM}_{\ell}(\vec{u},\vec{b})$ is consistent (with $\stiffness{\ell}\vec{u}=\vec{b}_{\ell}$)
the corresponding iteration matrix is
\begin{align}\label{eq:twogrimat}
\twogridmat{\ell} := 
\smoothingmat{\ell}^{\nu_2}
 \left(
  \id 
 -
  \prolmat{\ell}\left(\stiffness{\ell-1}\right)^{-1}\resmat{\ell}\stiffness{\ell}
\right)  \smoothingmat{\ell}^{\nu_1},
\end{align}
(this is calculated in  \cite[Eq. 48]{reuskenLN} as well as in \cite[Lemma  11.11]{hackbusch1994iterative}).
Thus, the error can be expressed as 
$ \vec{u}^\text{new}_{\ell} - \vec{u}_{\ell}^{\star} =\tgm_{\ell}(\vec{u}^{\text{old}}_{\ell}) - \vec{u}^{\star}_{\ell} = \twogridmat{\ell} (\vec{u}^{\text{old}}_{\ell}-\vec{u}^{\star}_{\ell})$.

The corresponding operator on $V_{\Xi_{\ell}}$
 is obtained by conjugating with $\sigma_{\ell}^*$. This gives the 
 error operator for the two level method $\twogridop{\ell} \sigma_{\ell}^{*}:= \sigma_{\ell}^{*}  \twogridmat{\ell} $.

It is worth noting that by (\ref{eq:synthesis_identity}) we have the equality
$
\sigma_{\ell}^{*} \prolmat{\ell}\left(\stiffness{\ell-1}\right)^{-1}\resmat{\ell}
\stiffness{\ell} 
=
\sigma_{\ell-1}^{*}\left(\stiffness{\ell-1}\right)^{-1}\resmat{\ell}
\stiffness{\ell}
$.
Using the identity $\stiffness{\ell}=\sigma_{\ell}\sigma_{\ell}^*$
followed by
(\ref{eq:analysis_identity})
and the identity  
$P_{\Xi_{\ell-1}}=\sigma^{\ast}_{\ell-1} \left( \stiffness{\ell-1} \right)^{-1}\sigma_{\ell-1}$
gives
\begin{equation}\label{eq:coarse_correction_identity}
\sigma_{\ell}^{*} \prolmat{\ell}\left(\stiffness{\ell-1}\right)^{-1}\resmat{\ell}\stiffness{\ell} 
=
P_{\Xi_{\ell-1}}
\sigma_{\ell}^*.
\end{equation}
It follows that 
$
\twogridop{\ell}
=
  \smoothingop{\ell}^{\nu_2} (\id_{V_{\Xi_{\ell-1}}} - P_{\Xi_{\ell-1}})\smoothingop{\ell}^{\nu_1} 
  $.

We are now in a position to show that the two level method is a contraction for  sufficiently large values of $\nu_1$.
  \begin{proposition}\label{prop:boundtwogridop}
  There is a constant $C$ so that for all $\ell$,
	$ \twogridop{\ell}$%
	satisfies the bound
	\begin{align*}
		\left\|  \twogridop{\ell}  \right\|_{L_{2}(\M)\to L_{2}(\M)} 
		\le 
		C_{\text{Prop.\ref{prop:boundtwogridop}}} g(\nu_1) 
		=
		C_{\text{Prop.\ref{prop:boundtwogridop}}}  \begin{cases} 
		   (2\nu_1 +1)^{-\frac{1}{2}}, & \text{symmetric }; \\ 
		    \nu_1^{-\frac{1}{4}}, & \text{non-symmetric }.
		 \end{cases}
	\end{align*}
 \end{proposition}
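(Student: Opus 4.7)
The plan is to combine the three ingredients already established: the factored form of the two-grid operator in terms of smoothing and coarse-grid correction, the approximation property (\cref{eq:approximation_property}), and the smoothing properties (\cref{lem:symmetric_smoothing} and \cref{lem:non-symmetric_smoothing}), together with the $L_2$-stability of iterated smoothing (\cref{lem:non-expansive}). Recall from the discussion following \cref{eq:coarse_correction_identity} that
$$
\twogridop{\ell} \;=\; \smoothingop{\ell}^{\nu_2}\bigl(\id_{V_{\Xi_\ell}} - P_{\Xi_{\ell-1}}\bigr)\smoothingop{\ell}^{\nu_1},
$$
so for any $v\in V_{\Xi_\ell}$ I would bound $\|\twogridop{\ell}v\|_{L_2(\M)}$ by estimating these three factors in sequence.

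First, since $\smoothingop{\ell}^{\nu_2}$ is $L_2$-stable uniformly in $\nu_2$ by \cref{lem:non-expansive}, I would immediately peel off the post-smoothing factor at the cost of an absolute constant, leaving $\|(\id-P_{\Xi_{\ell-1}})\smoothingop{\ell}^{\nu_1}v\|_{L_2}$ to control. Second, setting $w:=\smoothingop{\ell}^{\nu_1}v\in V_{\Xi_\ell}\subset W_2^1(\M)$ and applying the approximation property \cref{eq:approximation_property} at level $\ell-1$ gives
$$
\|(\id-P_{\Xi_{\ell-1}})w\|_{L_2(\M)} \;\le\; C\,h_{\Xi_{\ell-1}}\|w\|_{W_2^1(\M)} \;\le\; C' h_{\Xi_{\ell-1}}\|w\|_{\LL},
$$
where the last step uses the energy-norm equivalence \cref{eq:energynorm}. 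Third, I would apply the appropriate smoothing property (\cref{lem:symmetric_smoothing} in the symmetric case, \cref{lem:non-symmetric_smoothing} otherwise) to get
$$
\|w\|_{\LL} \;=\; \|\smoothingop{\ell}^{\nu_1}v\|_{\LL} \;\le\; C\,g(\nu_1)\,h_{\Xi_\ell}^{-1}\|v\|_{L_2(\M)},
$$
with $g(\nu_1)=(2\nu_1+1)^{-1/2}$ or $\nu_1^{-1/4}$ as appropriate.

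Chaining these three bounds and using the quasi-uniform refinement condition \cref{eq:gridconst}, which yields $h_{\Xi_{\ell-1}}/h_{\Xi_\ell}\le \gamma_1^{-1}$ independently of $\ell$, the factors of $h$ cancel up to a universal constant, giving
$$
\|\twogridop{\ell}v\|_{L_2(\M)} \;\le\; C_{\text{Prop.\ref{prop:boundtwogridop}}}\,g(\nu_1)\,\|v\|_{L_2(\M)}.
$$
The bound holds for every $v\in V_{\Xi_\ell}$, which suffices since $\twogridop{\ell}$ is defined on this finite-dimensional invariant subspace.

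There is no serious obstacle; the proof is essentially a bookkeeping exercise combining the three ingredients. The only subtle point is to recognize that the $h_{\Xi_{\ell-1}}$ appearing from the approximation property and the $h_{\Xi_\ell}^{-1}$ appearing from the smoothing estimate live on consecutive levels, and that the quasi-uniformity assumption \cref{eq:gridconst} is exactly what makes these cancel with a constant independent of $\ell$. All other constants entering $C_{\text{Prop.\ref{prop:boundtwogridop}}}$ depend only on $\rho$, $\gamma_1,\gamma_2$, the coefficients of $\LL$ through $c_0$, the damping $\theta$, and the kernel constants in \cref{assumption:lagrange_decay}, hence are uniform in $\ell$.
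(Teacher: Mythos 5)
Your proposal is correct and follows essentially the same route as the paper's own proof: peel off the post-smoothing factor via the uniform $L_2$-stability of \cref{lem:non-expansive}, bound the coarse-grid correction via \cref{lem:Nitsche} (equivalently \cref{eq:approximation_property}) together with coercivity, and finish with the smoothing property of \cref{lem:symmetric_smoothing} or \cref{lem:non-symmetric_smoothing}. The one step you make explicit that the paper leaves implicit -- that $h_{\ell-1}$ from the approximation property and $h_{\ell}^{-1}$ from the smoothing estimate cancel up to a constant by quasi-uniformity \cref{eq:gridconst} -- is indeed needed and correctly handled.
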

\begin{proof}
The following equality 
 $$\left\| \twogridop{\ell} \right\|_{L_2(\M)\to L_{2}(\M)}
=
\left\|  \smoothingop{\ell}^{\nu_2} (\id_{V_{\Xi_{\ell-1}}} - P_{\Xi_{\ell-1}})\smoothingop{\ell}^{\nu_1}  \right\|_{L_2(\M)\to L_{2}(\M)}$$
holds,
so Lemma \ref{lem:non-expansive} ensures
$$
\left\| 
\twogridop{\ell} \right\|_{L_2(\M)\to L_{2}(\M)}\le C \left\|  (\id_{V_{\Xi_{\ell-1}}} - P_{\Xi_{\ell-1}})\smoothingop{\ell}^{\nu_1} 
\right\|_{L_2(\M)\to L_{2}(\M)}.
$$
By Lemma \ref{lem:Nitsche},
$
\left\|  \id_{V_{\Xi_{\ell-1}}} - P_{\Xi_{\ell-1}}\right\|_{W_2^1(\M)\to L_2(\M)}\le C h_{\ell-1}$
holds, so 
it follows that
$$
\left\|  (\id_{V_{\Xi_{\ell-1}}} - P_{\Xi_{\ell-1}})\smoothingop{\ell}^{\nu_1}  \right\|_{L_2(\M)\to L_2(\M)}
\le  
C h_{\ell-1} \| \smoothingop{\ell}^{\nu_1}  \|_{L_2(\M)\to W_2^1(\M)}.
$$
By coercivity, this gives $\left\| \twogridop{\ell} \right\|_{L_2(\M)\to L_2(\M)}\le C h_{\ell-1} \| \smoothingop{\ell}^{\nu_1} v \|_{L_2(\M)\to \LL}$.
Finally, the result follows by applying the smoothing property: Lemma \ref{lem:symmetric_smoothing} in the symmetric case and Lemma \ref{lem:non-symmetric_smoothing} in the non-symmetric case.
 \end{proof}

 \begin{corollary}\label{cor:2g_error}
 Let $\theta$ \change{be} as in Lemma \ref{lem:smoothing_condition} 
 and let $\vec{u}^{\text{old}}_{\ell}\in \R^{n_{\ell}}$ be an initial guess, 
$u^{\text{old}}_{\ell} = \sigma_{\ell}^*(\vec{u}^{\text{old}}_{\ell})$,
 $\vec{u}_{\ell} = \tgm_{\ell}(\vec{u}^{\text{old}}_{\ell})$, 
  and  $u_{\ell} = \sigma_{\ell}^* \vec{u}_{\ell}$.
 If $a$ is symmetric, there is a constant $C$ independent of $\ell$ and $\theta$ so
 that 
 $$\|u^\text{new}_{\ell} - u_{\ell}^{\star}\|_{L_2(\M)}\le C\theta^{-1/2} \frac{1}{\sqrt{2\nu_1+1}}\|u^{\text{old}}_{\ell}-u_{\ell}^{\star}\|_{L_2(\M)}.$$
 If $a$ is not symmetric, 
  there is a constant $C$ independent of $\ell$ and $\theta$ so
 that 
 $$\|u^\text{new}_{\ell} -u_{\ell}^{\star} \|_{L_2(\M)}\le C \theta^{-1/2} \frac{1}{\sqrt[4]{\nu_1}}\|u^{\text{old}}_{\ell}-u_{\ell}^{\star}\|_{L_2(\M)}.$$
 \end{corollary}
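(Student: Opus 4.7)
The plan is to recognize this corollary as an immediate functional-analytic translation of \cref{prop:boundtwogridop}. The two-grid iteration matrix $\twogridmat{\ell}$ was constructed in \cref{eq:twogrimat} precisely so that the vectorial error identity
\begin{equation*}
\vec{u}_{\ell}^{\text{new}} - \vec{u}^{\star}_{\ell} = \twogridmat{\ell}\bigl(\vec{u}^{\text{old}}_{\ell}-\vec{u}^{\star}_{\ell}\bigr)
\end{equation*}
holds, and the corresponding operator $\twogridop{\ell}$ on $V_{\Xi_{\ell}}$ satisfies $\twogridop{\ell}\sigma_{\ell}^{*}=\sigma_{\ell}^{*}\twogridmat{\ell}$ by definition. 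Applying $\sigma_{\ell}^*$ to both sides of the vectorial identity therefore yields
\begin{equation*}
u^{\text{new}}_{\ell}-u^{\star}_{\ell} = \sigma_{\ell}^{*}\bigl(\vec{u}^{\text{new}}_{\ell}-\vec{u}^{\star}_{\ell}\bigr) = \twogridop{\ell}\bigl(u^{\text{old}}_{\ell}-u^{\star}_{\ell}\bigr).
\end{equation*}

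Taking $L_{2}(\M)$ norms and invoking \cref{prop:boundtwogridop} immediately gives
\begin{equation*}
\|u^{\text{new}}_{\ell}-u^{\star}_{\ell}\|_{L_{2}(\M)} \le C_{\text{Prop.\ref{prop:boundtwogridop}}}\,g(\nu_{1})\,\|u^{\text{old}}_{\ell}-u^{\star}_{\ell}\|_{L_{2}(\M)},
\end{equation*}
with $g(\nu_1)=(2\nu_1+1)^{-1/2}$ in the symmetric case and $g(\nu_1)=\nu_1^{-1/4}$ otherwise.

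The only nontrivial bookkeeping is to make explicit the $\theta^{-1/2}$ factor that is tucked inside $C_{\text{Prop.\ref{prop:boundtwogridop}}}$. Tracing the proof of \cref{prop:boundtwogridop}, the relevant $\theta$ dependence enters only through the applications of \cref{lem:symmetric_smoothing} and \cref{lem:non-symmetric_smoothing}, both of which contribute exactly a factor $\theta^{-1/2}$ (the constant $\theta^{-1/2}$ is explicit in \cref{lem:symmetric_smoothing} and can be read out from the proof of \cref{lem:non-symmetric_smoothing}); the remaining constants depend only on $\rho$, $\gamma_1,\gamma_2$, the ellipticity constant $c_0$, and the constants in \cref{assumption:lagrange_decay}. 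Factoring out $\theta^{-1/2}$ yields the claimed bound with a constant $C$ independent of $\ell$ and $\theta$, completing the proof. I do not anticipate any substantive obstacle here, as all of the analytic work has been done in \cref{prop:boundtwogridop} and the smoothing lemmas; this corollary is the cleanly packaged restatement in terms of functions and initial guesses rather than the abstract operator norm.
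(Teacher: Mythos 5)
Your proof follows exactly the same route as the paper's: identify $u^{\text{new}}_{\ell}-u^{\star}_{\ell}=\twogridop{\ell}(u^{\text{old}}_{\ell}-u^{\star}_{\ell})$ and invoke \cref{prop:boundtwogridop}. The paper's proof is literally the one-line "apply \cref{prop:boundtwogridop} to $u^{\text{old}}_{\ell}-u_{\ell}^{\star}$," and your extra bookkeeping for the explicit $\theta^{-1/2}$ factor (traced back through \cref{lem:symmetric_smoothing} and \cref{lem:non-symmetric_smoothing}) is a correct and useful clarification of why $C$ can be taken independent of $\theta$.
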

 \begin{proof}
 This follows by applying Proposition \ref{prop:boundtwogridop}
 to $u^{\text{old}}_{\ell}-u_{\ell}^{\star}$.
 \end{proof}
 
 %
 %
 %
%
%
%
%
 \subsection{multigrid with $\tau$-cycle}
 In the two-grid method, the computational bottleneck remains the solution on the coarse grid. 
 Thus, there have been many approaches to recursively apply the multigrid philosophy in order to use a direct solver only on the coarsest grid. 
 A flexible algorithm is the so-called $\tau$-cycle. 
 Here $\tau=1$ stands for the $V$-cycle in multigrid methods and $\tau=2$ gives the $W$-cycle.
 
 Our results hold for $\tau\ge 2$.

\begin{algorithm}
\caption{$\mgm^{(\tau)}_{\ell}$\newline Multigrid method with $\tau$ cycle,  see \cite[Eq. (31)]{reuskenLN} }
\label{alg:mg_tau}
\begin{algorithmic}[0]
\REQUIRE  approximation: $\vec{u}^{\text{old}}_{\ell} \in \R^{n_{\ell}}$, right-hand-side $\vec{b}_{\ell} \in \R^{n_{\ell}}$, $\theta$
\ENSURE new approximation $\R^{n_{\ell}} \ni \vec{u}^\text{new}_{\ell}\gets \mgm^{(\tau)}_{\ell}(\vec{u}^{\text{old}}_{\ell},\vec{b}_{\ell} )$

\IF{$\ell=0$} 
	\STATE $\vec{u}^{\text{new}}_{0} \gets \vec{A}^{-1}_{0}\vec{b}_{0}$
	\COMMENT{for coarsest grid use direct solver}
\ELSE 
	\STATE $\vec{u}_{\ell} \gets {J}_{\ell}^{\nu_{1}}(\vec{u}^{\text{old}}_{\ell},\vec{b}_{\ell})$
	\COMMENT{$\nu_1$ steps pre-smoothing}
	\STATE  $\vec{d}_{\ell-1} \gets \resmat{\ell}\left( \vec{b}_{\ell}-\stiffness{\ell} \vec{u}_{\ell}\right)$ 
	\COMMENT{restrict residual to coarser grid}
	\STATE $\vec{e}^{(0)}_{\ell-1} \gets 0$ 
	\COMMENT{initialize}
	\FOR{$i=0$ \TO $\tau$} 
		\STATE $\vec{e}^{(i)}_{\ell-1} \gets  \mgm^{(\tau)}_{\ell-1}(\vec{e}^{i-1}_{\ell-1},\vec{d}_{\ell-1})$ 
		\COMMENT{recursive call on $\Xi_{\ell-1}$}
	\ENDFOR
	\STATE $\vec{u}_{\ell} \gets \vec{u}_{\ell} + \prolmat{\ell} \vec{e}^{(\tau)}_{\ell-1}$
	\COMMENT{update with coarse grid correction}
	\STATE $\vec{u}^\text{new}_{\ell} \gets J_{\ell}^{\nu_{2}}(\vec{u}_{\ell} ,\vec{b}_{\ell})$
	\COMMENT{$\nu_2$ steps post-smoothing}
\ENDIF
\RETURN $\vec{u}^\text{new}_{\ell}$

\end{algorithmic} 
\end{algorithm}

Before proving  our main theorem, we need a statement from elementary real analysis.
\begin{lemma}\label{lem:recursivetau}
For any real numbers $\alpha,\beta,\gamma,\tau$ which satisfy
$0<\gamma<1$,
$\tau\ge 2$, 
$\beta>1/\tau$  and $\alpha<\min\left\{ \frac{\tau-1}{\tau} (\beta \tau)^{-\frac{1}{\tau-1}}, \frac{\tau-1}{\tau} \gamma\right\}$,
if the  
sequence $(x_n)_{n \in \N_0}$ satisfies the conditions
\begin{align*}
	x_0=0, \quad x_{n+1}\le \alpha +\beta \bigl(x_n\bigr)^{\tau}
\end{align*}
then
$x_n \le \gamma$ for all 
 $n\ge 0$.
\end{lemma}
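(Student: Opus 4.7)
The approach will be to identify a value $y$ with $0 < y < \gamma$ that is invariant under the one-step map $f(x) := \alpha + \beta x^{\tau}$, in the sense that $f(y) \le y$, and then to propagate the bound $x_n \le y$ by induction. Since $\tau \ge 2$ and $\beta > 0$, the map $f$ is monotone increasing on $[0,\infty)$, so once such a $y$ is identified, the inductive step $x_{n+1} \le f(x_n) \le f(y) \le y$ is immediate from the recursion $x_{n+1}\le \alpha + \beta x_n^\tau$.

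The crux is the choice of $y$. I will take $y := \frac{\tau \alpha}{\tau-1}$. The bound $y < \gamma$ is then an immediate consequence of the second hypothesis $\alpha < \frac{\tau-1}{\tau}\gamma$. To verify $f(y) \le y$, I compute $y - \alpha = \frac{\alpha}{\tau-1}$, so the inequality reduces to $\beta y^{\tau} \le \frac{\alpha}{\tau-1}$. Substituting $y = \frac{\tau\alpha}{\tau-1}$ and clearing denominators converts this to $\beta \tau^{\tau} \alpha^{\tau-1} \le (\tau-1)^{\tau-1}$, which upon taking a $(\tau-1)$-st root is precisely the first hypothesis $\alpha \le \frac{\tau-1}{\tau}(\beta\tau)^{-1/(\tau-1)}$. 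Thus the two hypotheses on $\alpha$ match exactly the two required properties of $y$.

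Putting these together, induction starting from $x_0 = 0 \le y$ gives $x_n \le y < \gamma$ for every $n \ge 0$. The only real obstacle is guessing the sharp invariant bound $y = \tau\alpha/(\tau-1)$; after that, the verification is routine algebra. It is worth noting why one cannot proceed by direct induction on the weaker statement $x_n \le \gamma$: the induction would demand $\alpha + \beta \gamma^{\tau} \le \gamma$, equivalently $\alpha \le \gamma(1 - \beta \gamma^{\tau-1})$, which requires $\gamma \le (\beta\tau)^{-1/(\tau-1)}$ and is not implied by the stated hypotheses. Working with the tighter intermediate bound $y$ is therefore essential.
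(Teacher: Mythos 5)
Your proof is correct, and the algebra checks out: with $y:=\tau\alpha/(\tau-1)$ one has $y-\alpha=\alpha/(\tau-1)$, the requirement $\beta y^\tau\le\alpha/(\tau-1)$ unwinds to $\alpha\le\frac{\tau-1}{\tau}(\beta\tau)^{-1/(\tau-1)}$, and $y<\gamma$ is exactly $\alpha<\frac{\tau-1}{\tau}\gamma$; induction from $x_0=0$ then yields $x_n\le y<\gamma$. The paper itself gives no proof and merely points to Lemma~6.15 of the cited lecture notes, so there is no in-paper argument to compare against, but the invariant-point construction you use is the standard device for recursions of this type and is almost certainly the intended argument. One point you should make explicit: the inductive step $x_{n+1}\le f(x_n)\le f(y)$ invokes monotonicity of $x\mapsto x^\tau$ on $[0,\infty)$ and therefore requires $x_n\ge 0$; the hypotheses $x_0=0$ together with the one-sided recursion do not by themselves force non-negativity of the sequence, so either add $x_n\ge 0$ as a standing assumption or note that in the application the $x_\ell$ are operator norms and hence automatically non-negative. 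Your closing observation -- that direct induction on $x_n\le\gamma$ would require $\alpha\le\gamma(1-\beta\gamma^{\tau-1})$, which the stated hypotheses do not deliver -- is accurate and explains well why the tighter invariant $y$ is needed.
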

\begin{proof}
This follows by elementary calculations, as in \cite[Lemma 6.15]{johnLN}.
\end{proof}
 Using \cite[Theorem 7.1]{reuskenLN}, we obtain for the iteration matrix of the Algorithm \ref{alg:mg_tau} the recursive (in the level)  form
 \begin{align*}
 	\multigridmat{\ell}=
	\begin{cases}
		\vec{0 },& \ell=0 \\
		\twogridmat{\ell}+\smoothingmat{\ell}^{\nu_2} \prolmat{\ell}  (\multigridmat{\ell-1})^{\tau} \vec{A}_{\ell-1}^{-1}\resmat{\ell} \stiffness{\ell} \smoothingmat{\ell}^{\nu_1}, & \ell\in \N.
	\end{cases} 
 \end{align*}
 Again, we define the corresponding operator via $\multigridop{\ell}{\tau} := \sigma_{\ell}^{*}  \multigridmat{\ell} ( \sigma_{\ell}^{*})^{-1}$.
 \begin{theorem}\label{thm:main}
For every $\gamma \in (0,1)$, there is a $\nu^{\star}:= \arg\min_{\nu \in \N} \{ \nu  \in \N  \ :  \  C_{\text{Prop.\ref{prop:boundtwogridop}}} g(\nu) \le \min\left\{ \frac{\tau-1}{\tau} (\beta_{\text{Thm.\ref{thm:main}}}  \tau)^{-\frac{1}{\tau-1}}, \frac{\tau-1}{\tau} \gamma\right\}\} $. For all $\nu_1 \ge \nu^{\star}$ we have 
\begin{align}
 \left\| \multigridop{\ell}{\tau} \right\|_{L_{2}(\M)  \to L_{2}(\M)} \le \gamma.
\end{align}
\end{theorem}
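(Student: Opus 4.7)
The plan is to obtain a scalar recursion for $\gamma_{\ell}:=\|\multigridop{\ell}{\tau}\|_{L_2(\M)\to L_2(\M)}$ of exactly the form required by \cref{lem:recursivetau}, and then read off the threshold $\nu^{\star}$ from \cref{prop:boundtwogridop}.

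First, I would translate the given matrix recursion
$$\multigridmat{\ell}=\twogridmat{\ell}+\smoothingmat{\ell}^{\nu_2}\prolmat{\ell}(\multigridmat{\ell-1})^{\tau}\vec{A}_{\ell-1}^{-1}\resmat{\ell}\stiffness{\ell}\smoothingmat{\ell}^{\nu_1}$$
into its operator analogue on $V_{\Xi_{\ell}}$ by conjugating with $\sigma_{\ell}^{\ast}$. Inserting $(\sigma_{\ell}^{\ast})^{-1}\sigma_{\ell}^{\ast}$ between each pair of matrices, using \cref{eq:synthesis_identity} to absorb $\sigma_{\ell}^{\ast}\prolmat{\ell}=\sigma_{\ell-1}^{\ast}$, commuting $\sigma_{\ell-1}^{\ast}(\multigridmat{\ell-1})^{\tau}=(\multigridop{\ell-1}{\tau})^{\tau}\sigma_{\ell-1}^{\ast}$, and applying the identity \cref{eq:coarse_correction_identity} to the remaining factor $\sigma_{\ell-1}^{\ast}\vec{A}_{\ell-1}^{-1}\resmat{\ell}\stiffness{\ell}(\sigma_{\ell}^{\ast})^{-1}=P_{\Xi_{\ell-1}}$, yields
$$\multigridop{\ell}{\tau}=\twogridop{\ell}+\smoothingop{\ell}^{\nu_2}(\multigridop{\ell-1}{\tau})^{\tau}P_{\Xi_{\ell-1}}\smoothingop{\ell}^{\nu_1}.$$
This is the clean operator analogue of the standard matrix identity that drives every multigrid convergence proof.

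Next I would apply the triangle inequality and submultiplicativity to get
$$\gamma_{\ell}\le\|\twogridop{\ell}\|+\|\smoothingop{\ell}^{\nu_2}\|\,\gamma_{\ell-1}^{\tau}\,\|P_{\Xi_{\ell-1}}\|\,\|\smoothingop{\ell}^{\nu_1}\|,$$
all norms taken on $L_2(\M)$. The first summand is controlled by \cref{prop:boundtwogridop} and equals $C_{\text{Prop.\ref{prop:boundtwogridop}}}\,g(\nu_1)=:\alpha$. The smoothing factors are uniformly bounded by \cref{lem:non-expansive}, and the Galerkin projector $P_{\Xi_{\ell-1}}$ has an $L_2$-bound uniform in $\ell$ (by coercivity together with the approximation property \cref{eq:approximation_property}, or directly because $P_{\Xi_{\ell-1}}$ is bounded on $W_2^1(\M)$ with norm $\le c_2/c_1$ and the $L_2$-bound then follows from the Nitsche estimate in \cref{lem:Nitsche}). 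Collecting these gives a universal constant $\beta=:\beta_{\text{Thm.\ref{thm:main}}}$, independent of $\ell$, with
$$\gamma_{\ell}\le \alpha+\beta\,\gamma_{\ell-1}^{\tau},\qquad \gamma_0=0,$$
since the coarsest level uses a direct solver.

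Finally, I would pick $\nu^{\star}$ so that $\alpha=C_{\text{Prop.\ref{prop:boundtwogridop}}}g(\nu_1)\le\min\{\tfrac{\tau-1}{\tau}(\beta\tau)^{-1/(\tau-1)},\tfrac{\tau-1}{\tau}\gamma\}$ for all $\nu_1\ge\nu^{\star}$; this is possible because $g(\nu_1)\to 0$ as $\nu_1\to\infty$ in both the symmetric and non-symmetric cases. \cref{lem:recursivetau} then gives $\gamma_{\ell}\le\gamma$ for every $\ell\ge 0$ by induction on $\ell$.

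The main obstacle I anticipate is establishing the uniform-in-$\ell$ bound on $\|P_{\Xi_{\ell-1}}\|_{L_2\to L_2}$, i.e.\ controlling the constant $\beta$ independently of the level. This is where one must combine coercivity, the metric equivalence \cref{eq:energynorm}, and the approximation property to preclude any dependence on $h_{\ell-1}$. Once $\beta$ is level-independent the rest is a clean application of the scalar recursion lemma.
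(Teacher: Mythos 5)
Your high-level strategy is exactly the paper's: conjugate the matrix recursion by $\sigma_{\ell}^{*}$ to get the operator identity $\multigridop{\ell}{\tau}=\twogridop{\ell}+\smoothingop{\ell}^{\nu_2}\bigl(\multigridop{\ell-1}{\tau}\bigr)^{\tau}P_{\Xi_{\ell-1}}\smoothingop{\ell}^{\nu_1}$, take $L_2$ norms, and feed the resulting scalar recursion into \cref{lem:recursivetau}. That part is correct and matches the paper step-for-step.

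The place where you diverge from the paper, and where your justification is not yet sound, is the treatment of the factor containing $P_{\Xi_{\ell-1}}$. You split it as $\|\smoothingop{\ell}^{\nu_2}\|\,\|P_{\Xi_{\ell-1}}\|_{L_2\to L_2}\,\|\smoothingop{\ell}^{\nu_1}\|$ and then appeal to ``coercivity plus the approximation property'' or ``the $W_2^1$ bound plus Nitsche'' for a level-independent $L_2$ bound on $P_{\Xi_{\ell-1}}$. Neither of those, as stated, yields such a bound: Nitsche/coercivity give $\|u-P_{\Xi_{\ell-1}}u\|_{L_2}\le Ch_{\ell-1}\|u\|_{W_2^1}$, which controls $\|P_{\Xi_{\ell-1}}u\|_{L_2}$ only after one further controls $\|u\|_{W_2^1}$ by $\|u\|_{L_2}$. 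That requires the input $u$ to lie in the finite-dimensional space $V_{\Xi_{\ell}}$ and the Bernstein inequality \cref{eq:Bern_disc} to be invoked, together with the quasi-uniform grid ratio $h_{\ell-1}/h_{\ell}\le\gamma_1^{-1}$ from \cref{eq:gridconst}. You correctly flag this as the main obstacle, but the reasons you give for resolving it are not complete.

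The paper sidesteps this issue altogether. Rather than isolating $\|P_{\Xi_{\ell-1}}\|_{L_2\to L_2}$, it bounds the composed factor $\|P_{\Xi_{\ell-1}}\smoothingop{\ell}^{\nu_1}\|_{L_2\to L_2}$ by writing $P_{\Xi_{\ell-1}}=\id-(\id-P_{\Xi_{\ell-1}})$ and using the triangle inequality. The first piece, $\|\smoothingop{\ell}^{\nu_1}\|_{L_2\to L_2}$, is bounded by the universal constant of \cref{lem:non-expansive}; the second piece, $\|(\id-P_{\Xi_{\ell-1}})\smoothingop{\ell}^{\nu_1}\|_{L_2\to L_2}$, is exactly $\|\twogridop{\ell}\|$ with $\nu_2=0$, which \cref{prop:boundtwogridop} already bounds by $C\,g(\nu_1)$. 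So the paper reuses already-proven estimates and never needs a standalone $L_2$-stability result for the Galerkin projector. Your version can be made rigorous with the inverse-inequality argument above, but the paper's decomposition is cleaner and is the intended route. If you want to keep your structure, insert the Bernstein inequality and the grid-ratio bound explicitly, and restrict $P_{\Xi_{\ell-1}}$ to $V_{\Xi_{\ell}}$ before claiming the $L_2$ bound.
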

 \begin{proof}
 	Here, we follow basically \cite[Proofof Theorem 7.20]{reuskenLN}. Let $\vec{v} \in \R^{n_{\ell}}$ arbitrary. 
	For $v =\sigma^{*}_{\ell} \vec{v}$, we obtain for $\ell \in \N$,
\begin{eqnarray*}
	\left\| \multigridop{\ell}{\tau} \right\|_{L_{2}(\M)\to L_{2}(\M) } 
		&\le &
	\left\| \twogridop{\ell} \right\|_{L_{2}(\M)\to L_{2}(\M)} \\
	&&+ 
	\left\|  
	    \smoothingop{\ell}^{\nu_2} \sigma^{*}_{\ell}\prolmat{\ell}  
	    (\multigridmat{\ell-1})^{\tau} \vec{A}_{\ell-1}^{-1}\resmat{\ell} 
	    \stiffness{\ell}(\sigma^{*}_{\ell})^{-1} \smoothingop{\ell}^{\nu_1}
	 \right\|_{L_{2}(\M)\to L_{2}(\M)} \\
		&\le &
	 \left\| \twogridop{\ell} \right\|_{L_{2}(\M)\to L_{2}(\M)}\\
	&& + 
	 C 
	 \left\|\sigma^{*}_{\ell} \prolmat{\ell}  
	 (\multigridmat{\ell-1})^{\tau} \vec{A}_{\ell-1}^{-1}
	 \resmat{\ell} \stiffness{\ell}  (\sigma^{*}_{\ell})^{-1} \smoothingop{\ell}^{\nu_1}
	 \right\|_{L_{2}(\M) }
\end{eqnarray*}
by Proposition \ref{prop:boundtwogridop} and Lemma \ref{lem:non-expansive}.
We treat 	the second term   with (\ref{eq:synthesis_identity}) and (\ref{eq:coarse_correction_identity})
to obtain
$$
\sigma^{*}_{\ell} \prolmat{\ell} ( \multigridmat{\ell-1})^{\tau} \vec{A}_{\ell-1}^{-1}\resmat{\ell} \stiffness{\ell} 
(\sigma^{*}_{\ell})^{-1} \smoothingop{\ell}^{\nu_1}
=
 \multigridop{\ell-1}{\tau}^{\tau}P_{\Xi_{\ell-1}} \smoothingop{\ell}^{\nu_1}  .
 $$
This leaves 
		\begin{align*}
		\left\| \multigridop{\ell}{\tau} v \right\|_{L_{2}(\M) } 
		&\le 
		  \left\| \twogridop{\ell} \right\|_{L_{2}(\M)\to L_{2}(\M)}\\
		  &\phantom{\le} + C \left\| \multigridop{\ell-1}{\tau}^{\tau} \right\|_{L_{2}(\M)\to L_{2}(\M)}
		 \left\| P_{\Xi_{\ell-1}}\smoothingop{\ell}^{\nu_1} \right\|_{L_{2}(\M)\to L_{2}(\M)}.
	\end{align*}
 The last factor can be bounded by writing $P_{\Xi_{\ell-1}}=\mathrm{id} -(\mathrm{id}-P_{\Xi_{\ell-1}})$ followed by the triangle inequality.
Lemma \ref{lem:non-expansive}  bounds $\|\smoothingop{\ell}^{\nu_1}\|_{L_2(\M)\to L_2(\M)}$, 
 while Proposition \ref{prop:boundtwogridop} (with $\nu_2=0$) bounds
  $\|(\mathrm{id}-P_{\Xi_{\ell-1}})\smoothingop{\ell}^{\nu_1}\|_{L_2(\M)\to L_2(\M)}$.
 Thus, we end up with a bound 
 \begin{align*}
 	\left\| \multigridop{\ell}{\tau} \right\|_{L_{2}(\M)  \to L_{2}(\M)} 
	\le 
	 \left\| \twogridop{\ell} \right\|_{L_{2}(\M)\to L_{2}(\M)}
	    + 
	    C \left\| \multigridop{\ell-1}{\tau} \right\|^\tau_{L_{2} \to L_{2}} ,
 \end{align*}
 which has the form required by Lemma \ref{lem:recursivetau},
 with $x_{\ell} = \left\| \multigridop{\ell}{\tau} \right\|_{L_{2}  \to L_{2}} $,
 $ \beta_{\text{Thm.\ref{thm:main}}} :=C$
 and $\alpha=  \left\| \twogridop{\ell} \right\|_{L_{2}(\M)\to L_{2}(\M)}$.
 The condition $\nu_1 \ge \nu^{\star}$ ensures the bound $\alpha \le \min\left\{ \frac{\tau-1}{\tau} (\beta_{\text{Thm.\ref{thm:main}}}  \tau)^{-\frac{1}{\tau-1}}, \frac{\tau-1}{\tau} \gamma\right\}$.
Thus 
\begin{align*}
 \left\| \multigridop{\ell}{\tau} \right\|_{L_{2}(\M)  \to L_{2}(\M)} \le \gamma 
 \end{align*}
 holds  by Lemma \ref{lem:recursivetau}, and the theorem follows.
 \end{proof}

\begin{remark}
At the finest level, the kernel-based Galerkin problem $\vec{A}_{L}\vec{x}=\vec{b}_{L}$, can be solved stably to any precision $\epsilon_{\max}$,
 by iterating the contraction matrix $\multigridmat{L}{\tau}$ .
Select $\gamma<1$ and fix $\nu_1$ so that Theorem \ref{thm:main} holds.
 Letting $\vec{u}^{(k+1)} =  \mgm^{(\tau)}_{L}(\vec{u}^{(k)},\vec{b}_{\ell} )$ gives
 $\|\vec{u}^*-\vec{u}^{(k)}\|_{\ell_2} \le \gamma^k\|\vec{u}^*-\vec{u}^{(0)}\|_{\ell_2} $.  
 If $k$ is the least integer satisfying  $\gamma^k\|\vec{u}^*-\vec{u}^{(0)}\|<\epsilon_{\max}$, 
 then $k\sim \frac{1}{\log \gamma} \log\left(\frac{\epsilon_{\max}}{\|\vec{u}^* -\vec{u}^{(0)}\|}\right)$.
 
 We note that 
 $\|\vec{u}^*-\vec{u}^{(k)}\|_{\vec{A}_L}
 \sim 
 \|\sigma_L^*\vec{u}^*-\sigma_L^*\vec{u}^{(k)}\|_{W_2^1} 
 \le 
 \CB h^{d/2-1} \|\vec{u}^*-\vec{u}^{(k)}\|_{\ell_2},$
 and since $d\ge2$, achieving $\|\vec{u}^*-\vec{u}^{(k)}\|_{\vec{A}_L}<\epsilon_{\max}$ 
 also requires only a fixed number of iterations. 
 This shows (\ref{eq:mgm_iteration_count}).
 \end{remark}

%
%
%
%
\section{The perturbed multigrid method}\label{sec:perturb}
In this section, we consider a modified problem 
$$\stiffnesspert{L}\check{\vec{u}}^{\star}_{L}=\vec{b}_{L},$$ where $\stiffnesspert{L}$ is close to $\stiffness{L}$.
The perturbed multigrid method will produce an approximate solution $\check{\vec{u}}^{(k)}_{L}$ to $\vec{u}^{\star}_{L}$  which satisfies 
$\| \check{\vec{u}}^{(k)}_{L}- \vec{u}^{\star}_{L}\|\le\| \check{\vec{u}}^{(k)}_{L} -  \check{\vec{u}}^{\star}_{L} \|+ \|\stiffnesspert{L}^{-1}-\stiffness{L}^{-1}\|\|\vec{b}_{L}\|$.
Thus, for the true solution $u$ to (\ref{eq:weakform})
and the Galerkin solution $u^{\star}_L = P_{\Xi_L}u = \sigma_{L}^* \vec{u}^{\star}_{L}$, we have 
$$\|\sigma^* \check{\vec{u}}^{(k)}_{L}- u\|_{W_2^k(\M)} \le \|(1-P_{\Xi_L})u\|_{W_2^k(\M)} + C h_L^{d/2-k} \|\vec{\check{u}}_{L}^{(k)} - \vec{\check{u}}_{L}^{\star}\|_{\ell_2}.$$
which can be made as close to $ \|(1-P_{\Xi_L})u\|_{L_2} $ as desired by controlling the perturbation  $\|\stiffnesspert{L}- \stiffness{L}\|_{2\to 2}$
and the error from the multigrid approximation $\|\vec{\check{u}}_{L}^{(k)} - \vec{\check{u}}_{L}^{\star}\|$.

Such systems may occur for a number of reasons: using localized  Lagrange basis functions (as in \cite{NRW}), 
truncating a series expansion of the kernel (as in \cite{collins2021kernel}), 
or by using quadrature to approximate the stiffness matrix (both \cite{NRW} and \cite{collins2021kernel}).
In the next section, we will apply this by truncating the original stiffness matrix to employ only banded matrices and thereby 
enjoy a computational speed up.

\paragraph{Perturbed multigrid method}
The perturbed multigrid method replaces matrices $ \stiffness{\ell}$,  $\prolmat{\ell}$ and $\resmat{\ell}$ appearing in 
Algorithms \ref{alg:mg_twogrid} and \ref{alg:mg_tau} with 
matrices $\stiffnesspert{\ell}$, $\prolpert{\ell}$ and $\respert{\ell}$.
We assume that for each $\ell$ there exists $0<\epsilon_{\ell}$ so that 
$$
 \|\stiffnesspert{\ell}- \stiffness{\ell}\|_{\ell_2\to \ell_2}, \|\prolpert{\ell}- \prolmat{\ell}\|_{\ell_2\to \ell_2}, \|\respert{\ell}- \resmat{\ell}\|_{\ell_2\to \ell_2} 
 \le \epsilon_{\ell}.
 $$
 In this set up $\epsilon_{\ell}$ may change per level.\footnote{Which could be the case, e.g., if $\stiffnesspert{\ell}$ involved a $\Xi_{\ell}$ dependent
 quadrature scheme, or was obtained by bandlimiting (as we will do in the next section)} 
We assume $\epsilon_{\ell}$ is  small enough that
$$
\|\stiffnesspert{\ell}\|_{2\to 2} \le 2C_Ah_{\ell}^{d-2},
\qquad
\|\prolpert{\ell}\|_{2\to 2}<2C_{pro}  
\quad
\text{ and }
\quad
\|\respert{\ell}\|_{2\to 2}<2C_{pro}.
$$
It then follows from standard arguments that
$$
 \|\stiffnesspert{\ell}^{-1}\|_{2\to 2} \le 2 \CH h_{\ell}^{-d}
 \quad
 \text{and}
 \quad
\|\stiffnesspert{\ell}^{-1}- \stiffness{\ell}^{-1}\|_2 
\le 
2(\CH)^2 h_{\ell}^{-2d}\epsilon_{\ell}.
$$
 Because of the entry-wise error $ | (\stiffnesspert{\ell})_{\xi,\xi} -  (\stiffness{\ell})_{\xi,\xi}|\le \epsilon_{\ell}$, 
 we also have that  the diagonal matrix $\preconpert{\ell}=\diag(\stiffnesspert{\ell})$
 satisfies 
 $$
 \kappa(\preconpert{\ell}) 
 \le  
 \frac{1+\epsilon_{\ell}}{1-\epsilon_{\ell}} \kappa(\stiffnessprecon{\ell})
 \le
 2\frac{\Cstiff}{\Cdiag}.
 $$
 Therefore,  there is $\theta$ so that for all $\ell$ and  all $\vec{x} \in \R^{n_{\ell}}$, 
$\theta \langle \stiffnesspert{\ell} \vec{x},\vec{x}\rangle\le \langle \preconpert{\ell}  \vec{x},\vec{x}\rangle$ holds.
This permits us to consider the Jacobi iteration applied to the perturbed linear system
$
\stiffnesspert{\ell}\vec{x}_{\ell}=\vec{b},
$
which yields  $\check{J}_{\ell}: \R^{n_{\ell} }\times \R^{n_{\ell}} \to \R^{n_{\ell}}$ defined by
\begin{align}\label{eq:smoothingmatpert}
	\check{J}_{\ell}(\vec{x},\vec{b})  =
	\smoothingmatpert{\ell} \vec{x} + \theta \preconpert{\ell}^{-1}\vec{b},
	\quad \text{where} \quad
	\smoothingmatpert{\ell} := \id -\theta \preconpert{\ell}^{-1}\stiffnesspert{\ell}.
	 \end{align}
 Since $ \smoothingmat{\ell} -\smoothingmatpert{\ell}  =  \theta \Bigl( \preconpert{\ell}^{-1}(\stiffnesspert{\ell}-\stiffness{\ell}) +( \preconpert{\ell}^{-1}- \stiffnessprecon{\ell}^{-1 } )\stiffness{\ell}\Bigr)$, we can estimate the error between smoothing matrices as
 \begin{equation}
 \label{eq:smoothingapprox}
 \| \smoothingmat{\ell} -\smoothingmatpert{\ell}  \|_{2\to 2} 
 \le 
  \frac{3\theta C_A}{(\Cdiag)^2 }  h_{\ell}^{2-d} \epsilon_{\ell}.
 \end{equation}
Because $\theta \langle \stiffnesspert{\ell} \vec{x},\vec{x}\rangle\le \langle \preconpert{\ell}  \vec{x},\vec{x}\rangle$ it follows
that for all $n$,
\begin{equation}\label{eq:perturb_nonexpansive}
\|\smoothingmatpert{\ell}^n\|_{2\to 2}\le\sqrt{ 2\frac{\Cstiff}{\Cdiag} }. 
\end{equation}
This also yields the following Lemma.
  \begin{lemma}\label{lem:smoothiterbound}
  For $M \in \N$, we get the bound
  	\begin{align*}
	 \left\|\smoothingmat{\ell}^{M}-\smoothingmatpert{\ell}^{M} \right\|_{2\to 2}\le  
	 2 M  \frac{\Cstiff}{\Cdiag}\left\| \smoothingmat{\ell}-\smoothingmattrunc{\ell}{r_{\Xi}} \right\|_{2 \to 2}.
	 \end{align*}
	\end{lemma}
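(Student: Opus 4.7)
The plan is to reduce the $M$-step difference to a sum of $M$ single-step differences via the standard telescoping identity. Writing $A := \smoothingmat{\ell}$ and $B := \smoothingmatpert{\ell}$ (identifying the truncated/perturbed iterator on the right-hand side with $\smoothingmatpert{\ell}$), the elementary identity
\begin{equation*}
A^{M} - B^{M} \;=\; \sum_{k=0}^{M-1} A^{M-1-k}\,(A-B)\,B^{k}
\end{equation*}
holds in any ring and is verified by a direct calculation (each intermediate $A^{M-1-k}B^{k+1}$ cancels with the next $A^{M-1-(k+1)}B^{k+1}$). Taking operator $\ell_2\!\to\!\ell_2$ norms and applying submultiplicativity and the triangle inequality reduces the problem to uniformly bounding the powers $\|A^{j}\|_{2\to 2}$ and $\|B^{j}\|_{2\to 2}$.

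For the unperturbed powers, \cref{lem:non-expansive} provides $\|A^{j}\|_{2\to 2}\le \kappa(\stiffnessprecon{\ell}^{1/2})\le \sqrt{C_{stiff}/C_{diag}}$ for every $j$. For the perturbed powers, the analogous bound \cref{eq:perturb_nonexpansive}, which itself follows from $\theta\langle \stiffnesspert{\ell}\vec{x},\vec{x}\rangle \le \langle\preconpert{\ell}\vec{x},\vec{x}\rangle$, yields $\|B^{j}\|_{2\to 2}\le \sqrt{2\,C_{stiff}/C_{diag}}$. Multiplying these two uniform bounds gives
\begin{equation*}
\|A^{M-1-k}\|_{2\to 2}\,\|B^{k}\|_{2\to 2}\;\le\;\sqrt{2}\,\frac{C_{stiff}}{C_{diag}},
\end{equation*}
independently of $k$. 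Summing the telescoping bound over $k=0,\dots,M-1$ then produces
\begin{equation*}
\|A^{M}-B^{M}\|_{2\to 2}\;\le\; \sqrt{2}\,M\,\frac{C_{stiff}}{C_{diag}}\,\|A-B\|_{2\to 2}\;\le\;2M\,\frac{C_{stiff}}{C_{diag}}\,\|A-B\|_{2\to 2},
\end{equation*}
which is exactly the claimed inequality.

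There is essentially no obstacle: the whole argument rests on the telescoping identity plus two already-proved uniform power bounds, both of which are granted by \cref{lem:non-expansive} and \cref{eq:perturb_nonexpansive}. The only mild subtlety is to make sure the asymmetry between the sharper bound for $A^{j}$ and the factor-$\sqrt{2}$-worse bound for $B^{j}$ is absorbed cleanly into the stated prefactor $2$, which is immediate from $\sqrt{2}<2$. The proof is short and mechanical; the lemma's value is that it turns a control on the single-step perturbation $\|\smoothingmat{\ell}-\smoothingmatpert{\ell}\|_{2\to2}$ (which will be supplied by \cref{eq:smoothingapprox} and the truncation estimates of \cref{sec:trunc}) into a control on the $M$-fold iterates with only a linear-in-$M$ loss, as is needed to propagate perturbation errors through the $\nu_{1}$ pre- and $\nu_{2}$ post-smoothing steps of the multi-grid algorithm.
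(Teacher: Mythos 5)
Your proposal is correct and follows the paper's own argument essentially verbatim: the same telescoping identity, the same use of \cref{lem:non-expansive} for powers of $\smoothingmat{\ell}$ and \cref{eq:perturb_nonexpansive} for powers of $\smoothingmatpert{\ell}$, and the same final absorption of $\sqrt{2}$ into the prefactor $2$. You also correctly read the $\smoothingmattrunc{\ell}{r_{\Xi}}$ on the right-hand side of the statement as the perturbed smoother $\smoothingmatpert{\ell}$, which is what the paper's own proof uses.
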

  \begin{proof}
  By telescoping, we have $\smoothingmat{\ell}^{M}-\smoothingmatpert{\ell}^{M} 
  = \sum_{j=0}^{M-1} \smoothingmat{\ell}^{M-1-j}\left( \smoothingmat{\ell} -\smoothingmatpert{\ell}\right)\smoothingmatpert{\ell}^{j}$.
The inequality
\begin{align*}
\|\smoothingmat{\ell}^{M}-\smoothingmatpert{\ell}^{M}\|\le	
\|\smoothingmat{\ell} -\smoothingmatpert{\ell}\|
\sum_{m=0}^{M-1}\left\| \smoothingmat{\Xi}^m \right\|_{2 \to 2}\left\| \smoothingmatpert{\ell}^{M-1-m} \right\|_{2 \to 2}.
\end{align*}
follows from norm properties, and  the result follows by applying (\ref{eq:perturb_nonexpansive}) and Lemma \ref{lem:non-expansive}.
\end{proof}
This lemma can be combined with the estimate (\ref{eq:smoothingapprox})
to obtain 
\begin{align}\label{eq:smooth_pert}
 \left\|\smoothingmat{\ell}^{M}-\smoothingmatpert{\ell}^{M} \right\|_{2\to 2}
 \le  
 6\theta \frac{\Cstiff C_A}{(\Cdiag)^3}M  h_{\ell}^{2-d} \epsilon_{\ell}.
\end{align}
%
 \paragraph{Perturbed two grid method}
Now, we consider the perturbed version of the two step algorithm.
We aim to apply the two-grid method with only truncated matrices to the problem
\begin{align}\label{eq:trruncatedsystem}
\stiffnesspert{\ell} \check{\vec{u}}^{\star}_{\ell}=\vec{b}_{\ell}=\sigma_{\ell} u^{\star}_{\ell}.
\end{align}
Applying the two-grid method 
 to 
 (\ref{eq:trruncatedsystem}) gives
$
	\check{\vec{u}}^{\star}_{\ell}-\check{\vec{u}}^{\text{new}}_{\ell}= 
	\twogridmatpert{\ell} \left(\check{\vec{u}}^{\star}_{\ell}-\check{\vec{u}}^{\text{old}}_{\ell} \right),
$
where the two grid iteration matrix is
\begin{align}\label{eq:twogrimattrunc}
 \twogridmatpert{\ell}   := 
\smoothingmatpert{\ell}^{\nu_2}
 \left(\id -  \prolpert{\ell}  \left(\stiffnesspert{\ell-1}\right)^{-1}  
 \respert{\ell} \stiffnesspert{\ell}
\right)  \smoothingmatpert{\ell}^{\nu_1}.
\end{align}

\begin{lemma}\label{lem:2step_pert}
If $ \epsilon_{\ell} \le h_{\ell}^{d+2}$ holds for all $\ell\le L$,
then
\begin{align*}
\| \twogridmat{\ell} -\twogridmatpert{\ell}  \|_{2\to 2}\le 
C  (\nu_2 +\nu_1 + h_{\ell-1}^{-4}) h_{\ell-1}^{-(d+2)} \epsilon_{\ell-1}.
\end{align*}
\end{lemma}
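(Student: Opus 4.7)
The plan is to write $\twogridmat{\ell} - \twogridmatpert{\ell}$ as a telescoping sum that isolates the three sources of perturbation in \cref{eq:twogrimat}: the post-smoothing power $\smoothingmat{\ell}^{\nu_2}$, the coarse-grid correction, and the pre-smoothing power $\smoothingmat{\ell}^{\nu_1}$. Introducing the shorthand
\[
T_\ell := \id - \prolmat{\ell}(\stiffness{\ell-1})^{-1}\resmat{\ell}\stiffness{\ell},\qquad
\check T_\ell := \id - \prolpert{\ell}(\stiffnesspert{\ell-1})^{-1}\respert{\ell}\stiffnesspert{\ell},
\]
I decompose
\[
\twogridmat{\ell} - \twogridmatpert{\ell}
= \bigl(\smoothingmat{\ell}^{\nu_2}-\smoothingmatpert{\ell}^{\nu_2}\bigr)T_\ell\smoothingmat{\ell}^{\nu_1}
+ \smoothingmatpert{\ell}^{\nu_2}(T_\ell-\check T_\ell)\smoothingmat{\ell}^{\nu_1}
+ \smoothingmatpert{\ell}^{\nu_2}\check T_\ell\bigl(\smoothingmat{\ell}^{\nu_1}-\smoothingmatpert{\ell}^{\nu_1}\bigr).
\]

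For the outer smoothing factors the bounds are already in hand: \cref{lem:non-expansive} and \cref{eq:perturb_nonexpansive} give $\|\smoothingmat{\ell}^{\nu_i}\|_{2\to 2}$ and $\|\smoothingmatpert{\ell}^{\nu_i}\|_{2\to 2}$ bounded by a constant, while \cref{eq:smooth_pert} yields $\|\smoothingmat{\ell}^{\nu_i}-\smoothingmatpert{\ell}^{\nu_i}\|_{2\to 2} \le C\nu_i h_\ell^{2-d}\epsilon_\ell$. For the coarse-grid correction, using the naive product bound together with \cref{eq:stiffnessupperbound}, \cref{lem:stiffness_inverse}, \cref{eq:prologantionmatrixbound} and their perturbed counterparts gives $\|T_\ell\|_{2\to 2}, \|\check T_\ell\|_{2\to 2} \le 1 + C h_{\ell-1}^{-2}$. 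Substituting these into the first and third pieces produces contributions of order $C\nu_i h_\ell^{2-d}h_{\ell-1}^{-2}\epsilon_\ell$, which after applying quasi-uniformity ($h_\ell\sim h_{\ell-1}$) and the hypothesis $\epsilon_\ell\le h_\ell^{d+2}$ to bridge $\epsilon_\ell$ and $\epsilon_{\ell-1}$ take the form $C\nu_i h_{\ell-1}^{-(d+2)}\epsilon_{\ell-1}$.

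The middle term is handled by a standard four-summand telescoping of the product
\[
\prolmat{\ell}\stiffness{\ell-1}^{-1}\resmat{\ell}\stiffness{\ell} - \prolpert{\ell}\stiffnesspert{\ell-1}^{-1}\respert{\ell}\stiffnesspert{\ell},
\]
in which exactly one factor is replaced per summand. Estimating each summand with the size bounds on $\prolmat{\ell},\resmat{\ell},\stiffness{\ell},\stiffness{\ell-1}^{-1}$ and the perturbation sizes $\|\prolmat{\ell}-\prolpert{\ell}\|, \|\resmat{\ell}-\respert{\ell}\|, \|\stiffness{\ell}-\stiffnesspert{\ell}\|\le\epsilon_\ell$ and $\|\stiffness{\ell-1}^{-1}-\stiffnesspert{\ell-1}^{-1}\|\le 2C_{inv}^2 h_{\ell-1}^{-2d}\epsilon_{\ell-1}$, the dominant contribution is the one coming from the inverse-stiffness factor, giving $C h_{\ell-1}^{-(d+2)}\epsilon_{\ell-1}$; the remaining three contribute $C\epsilon_\ell(h_{\ell-1}^{-2}+h_{\ell-1}^{-d})$, which under the hypothesis $\epsilon_\ell\le h_\ell^{d+2}$ is subsumed into the leading term.

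Adding the three pieces, invoking \cref{lem:non-expansive}/\cref{eq:perturb_nonexpansive} on the surrounding smoothing factors and rewriting everything uniformly in $h_{\ell-1}$ and $\epsilon_{\ell-1}$ via quasi-uniformity produces the claimed $C(\nu_1+\nu_2+h_{\ell-1}^{-4})h_{\ell-1}^{-(d+2)}\epsilon_{\ell-1}$, with the $h_{\ell-1}^{-4}$ absorbing residual powers of $h$ that arise when the $\|T_\ell\|\sim h_{\ell-1}^{-2}$ bound is combined with the smoothing-perturbation estimate $\nu h_\ell^{2-d}\epsilon_\ell$ in the first and third pieces. The main obstacle is purely the bookkeeping: juggling the various powers of $h$ at levels $\ell$ and $\ell-1$ together with the distinct perturbation parameters $\epsilon_\ell,\epsilon_{\ell-1}$, and consistently applying both quasi-uniformity and the hypothesis $\epsilon_\ell\le h_\ell^{d+2}$ so that every subleading term is absorbed into the single clean expression on the right-hand side.
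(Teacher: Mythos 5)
Your proposal follows the paper's proof essentially step for step: you use the same three-factor telescoping identity (the paper's \cref{remark:telescoping}) to split $\twogridmat{\ell}-\twogridmatpert{\ell}$ into smoothing-perturbation pieces and a coarse-grid correction piece, bound the latter by a four-factor telescope just as the paper bounds $E_1$, and invoke \cref{lem:non-expansive}, \cref{eq:perturb_nonexpansive}, \cref{eq:smooth_pert} and the $\|T_\ell\|\lesssim h_{\ell}^{-2}$ bound exactly as the paper does. The only cosmetic difference is that you write out the three-term telescoping sum explicitly while the paper applies \cref{remark:telescoping} as a black box; the bookkeeping via quasi-uniformity and $\epsilon_\ell\le h_\ell^{d+2}$ to land on the $h_{\ell-1}^{-(d+2)}\epsilon_{\ell-1}$ form is the same (and shares the paper's implicit reliance on $\epsilon_\ell$ and $\epsilon_{\ell-1}$ being comparably sized, as in the truncation application).
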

\begin{remark}\label{remark:telescoping}
A basic idea, used throughout this section, is the following result: If $\max(\|M_j\|,\|\check{M}_j\|) \le C_j$, then 
\begin{equation*}
\left\| \prod_{j=1}^n M_j - \prod_{j=1}^n \check{M}_j \right\| \le \left(\prod_{j=1}^n C_j\right)\left(\sum_{j=1}^n \frac{\|M_j-\check{M}_j\|}{C_j}\right).
\end{equation*}
\end{remark}

\begin{proof}[Proof of Lemma \ref{lem:2step_pert}]
Consider 
\begin{eqnarray*}
E_1
&:=&
\left( \id - \prolpert{\ell} \stiffnesspert{\ell-1}^{-1}  \respert{\ell} \stiffnesspert{\ell}\right)
-
\left( \id -\prolmat{\ell}\stiffness{\ell-1}^{-1}\resmat{\ell}\stiffness{\ell} \right)\\
&=&
\prolmat{\ell}\stiffness{\ell-1}^{-1}\resmat{\ell}\stiffness{\ell}-
\prolpert{\ell} \stiffnesspert{\ell-1}^{-1}  \respert{\ell} \stiffnesspert{\ell}.
\end{eqnarray*}
By Remark \ref{remark:telescoping}, we have
	\begin{align*} 		
	\left\|E_1 \right\|_{2\to 2}
	& \le  C h_{\ell-1}^{-d}h_{\ell}^{d-2}
	         \left(  2\frac{\epsilon_{\ell}}{2C_{pro}}	          
	          + \frac{  2( \CH)^2
	             \epsilon_{\ell-1} h_{\ell-1}^{-2d}}{2 \CH
	             h_{\ell-1}^{-d}}
		+  \frac{\epsilon_{\ell}}{  2C_A h_{\ell}^{d-2} }\right)\\
		&\le C h_{\ell-1}^{-d-2} \epsilon_{\ell-1}.
	\end{align*}
	Now, we consider the difference
	\begin{align*} 	E_{2}&:=\twogridmat{\ell}  -\twogridmatpert{\ell}\\
	&=
	\smoothingmat{\ell}^{\nu_2} 
	\left(\id-\prolmat{\ell} \stiffness{\ell-1}^{-1}  \resmat{\ell}\stiffness{\ell} \right) 
	 \smoothingmat{\ell}^{\nu_1}-
	  \smoothingmatpert{\ell}^{\nu_2}
          \left(\id -  \prolpert{\ell}  \stiffnesspert{\ell-1}^{-1}   \respert{\ell}  \stiffnesspert{\ell}\right)  
          \smoothingmatpert{\ell}^{\nu_1}.
          \end{align*}
Because $ \|\id -  \prolpert{\ell}  \stiffnesspert{\ell-1}^{-1}   \respert{\ell}  \stiffnesspert{\ell}\|\le C h_{\ell}^{-2}$ and $\|\id -  \prolpert{\ell}  \stiffnesspert{\ell-1}^{-1}   \respert{\ell}  \stiffnesspert{\ell}\|\le C h_{\ell}^{-2}$,
the lemma follows by using Remark \ref{remark:telescoping}
with  (\ref{eq:smoothingapprox}), (\ref{eq:smooth_pert}), Lemma \ref{lem:smoothiterbound}, and the above estimate of $\|E_1\|$.
\end{proof}
%
%
%
\paragraph{Perturbed $\tau$-cycle}
As in the two-step case we consider the multigrid method also for the truncated system in 
(\ref{eq:trruncatedsystem}), i.e., 
$\stiffnesspert{\ell} \check{\vec{u}}^{\star}_{\ell}=\vec{b}_{\ell}=\sigma_{\ell} u^{\star}_{\ell}$.
The multigrid iteration matrix is
\begin{align*}
	\multigridmatpert{\ell}=
	\begin{cases}
		\vec{0 },& \ell=0 \\
		\twogridmatpert{\ell}
		+\smoothingmatpert{\ell}^{\nu_2}   \prolpert{\ell} 
		\multigridmatpert{\ell-1}^{\tau}    \stiffnesspert{\ell-1}^{-1} 
		   \respert{\ell}     \stiffnesspert{\ell}
		\smoothingmatpert{\ell}^{\nu_1}, & 1\le \ell\le L .
	\end{cases} 
\end{align*}
From this we define the operator $\multigridoppert{\ell} := \sigma_{\ell}^{*}  \multigridmatpert{\ell} ( \sigma_{\ell}^{*})^{-1}$.
%
\begin{theorem}\label{thm:theorem_pert}
For  any $0<\gamma<1$, 
there exist constants $C_1$, $C_2$ and $C_4$ and $\nu^{\star}\in \N$ such that $C_{\text{Prop.\ref{prop:boundtwogridop}}} g(\nu^{\star}) \le C_4\min\left\{ \frac{\tau-1}{\tau} (\beta_{\text{Thm.\ref{thm:main}}} \tau)^{-\frac{1}{\tau-1}}, \frac{\tau-1}{\tau} \gamma \right\}$. 
For $\nu_1 \ge \nu^{\star}$ choose $\epsilon_{\ell}$ such that $\epsilon_{\ell} h_{\ell}^{-(d+2)}(h_{\ell}^{-4}+\nu_1+\nu_2)< C^{-1}_{\text{Lem.\ref{lem:2step_pert}}}\min(C_1,\gamma C_2)$ for $0\le \ell \le L$ for all $\ell$ and if $h_0 \le \left(C^{-1}_{\text{Lem.\ref{lem:2step_pert}}}\min(C_1,\gamma C_2)\right)^{-1/4}$, then
\begin{align*}
	\left\|\multigridoppert{\ell} \right\|_{L_{2}(\M) \to L_{2}(\M)}\le \gamma.
\end{align*}
\end{theorem}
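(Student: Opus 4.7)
The plan is to mimic the proof of \cref{thm:main}: derive a recursive bound of the form $x_{\ell} \le \alpha + \beta\, x_{\ell-1}^{\tau}$ for $x_{\ell} := \|\multigridoppert{\ell}\|_{L_{2}(\M)\to L_{2}(\M)}$, verify the hypothesis of \cref{lem:recursivetau}, and conclude $x_{\ell}\le \gamma$. The only genuinely new work is bookkeeping for perturbation terms arising because the identities \cref{eq:synthesis_identity}, \cref{eq:analysis_identity}, and \cref{eq:coarse_correction_identity} no longer hold exactly in the perturbed setting.

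First I would handle the two-grid part. The Riesz equivalence \cref{eq:rieszconsequence} together with \cref{lem:2step_pert} gives
\begin{equation*}
\|\twogridoppert{\ell}\|_{L_{2}\to L_{2}} \le \|\twogridop{\ell}\|_{L_{2}\to L_{2}} + \tfrac{C_{2}}{C_{1}}\|\twogridmatpert{\ell}-\twogridmat{\ell}\|_{2\to 2},
\end{equation*}
where the first term is controlled by $C_{\text{Prop.\ref{prop:boundtwogridop}}} g(\nu_{1})$ via \cref{prop:boundtwogridop}, and the second by the smallness hypothesis on $\epsilon_{\ell-1}$.

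Next I would turn the matrix recursion for $\multigridmatpert{\ell}$ into an operator recursion on $\multigridoppert{\ell} = \sigma^{*}_{\ell}\multigridmatpert{\ell}(\sigma^{*}_{\ell})^{-1}$. Define the perturbed smoothing operator $\smoothingoppert{\ell} := \sigma^{*}_{\ell}\smoothingmatpert{\ell}(\sigma^{*}_{\ell})^{-1}$; by \cref{eq:perturb_nonexpansive} and \cref{eq:rieszconsequence} the iterates $\smoothingoppert{\ell}^{n}$ are uniformly bounded on $L_{2}(\M)$. The key step is to rewrite
\begin{equation*}
\sigma^{*}_{\ell}\smoothingmatpert{\ell}^{\nu_{2}}\prolpert{\ell}(\multigridmatpert{\ell-1})^{\tau}\stiffnesspert{\ell-1}^{-1}\respert{\ell}\stiffnesspert{\ell}\smoothingmatpert{\ell}^{\nu_{1}}(\sigma^{*}_{\ell})^{-1}
=
\smoothingoppert{\ell}^{\nu_{2}}\multigridoppert{\ell-1}^{\tau} P_{\Xi_{\ell-1}}\smoothingoppert{\ell}^{\nu_{1}} + E_{\ell},
\end{equation*}
which follows by adding and subtracting the unperturbed matrices slot by slot: the unperturbed identities \cref{eq:synthesis_identity} and \cref{eq:coarse_correction_identity} produce the $\multigridoppert{\ell-1}^{\tau}P_{\Xi_{\ell-1}}$ factor (together with $\sigma^{*}_{\ell-1}\multigridmatpert{\ell-1}^{\tau}(\sigma^{*}_{\ell-1})^{-1}=\multigridoppert{\ell-1}^{\tau}$), while the telescoping trick of \cref{remark:telescoping} combined with \cref{lem:smoothiterbound} and the perturbation estimates on $\prolpert{\ell}$, $\respert{\ell}$, $\stiffnesspert{\ell}$, $\stiffnesspert{\ell-1}^{-1}$ bounds $\|E_{\ell}\|_{L_{2}\to L_{2}}$ by a constant multiple of $\epsilon_{\ell-1} h_{\ell-1}^{-(d+2)}(\nu_{1}+\nu_{2}+h_{\ell-1}^{-4})$. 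Splitting $P_{\Xi_{\ell-1}}\smoothingoppert{\ell}^{\nu_{1}}=\smoothingoppert{\ell}^{\nu_{1}}-(\id - P_{\Xi_{\ell-1}})\smoothingoppert{\ell}^{\nu_{1}}$ and applying \cref{lem:non-expansive} and \cref{prop:boundtwogridop} (for the perturbed operator, which differs from the unperturbed one by the $O(\epsilon_{\ell})$ terms already estimated) exactly as in the proof of \cref{thm:main} yields
\begin{equation*}
x_{\ell} \le \underbrace{C_{\text{Prop.\ref{prop:boundtwogridop}}} g(\nu_{1}) + C_{1}}_{\alpha} + \beta_{\text{Thm.\ref{thm:main}}}\, x_{\ell-1}^{\tau}.
\end{equation*}
The assumption $\nu_{1}\ge \nu^{\star}$ forces $\alpha\le \min\{\tfrac{\tau-1}{\tau}(\beta_{\text{Thm.\ref{thm:main}}}\tau)^{-1/(\tau-1)}, \tfrac{\tau-1}{\tau}\gamma\}$, and the extra assumption on $h_{0}$ handles the base case $\ell=0$ where $\multigridmatpert{0}=\vec{0}$ only because the perturbed direct solve on the coarsest grid is stable. \Cref{lem:recursivetau} then closes the induction and gives $x_{\ell}\le \gamma$ for every $\ell\le L$.

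The main obstacle will be the sharp tracking of $E_{\ell}$: each of the factors $\prolpert{\ell}, \respert{\ell}, \stiffnesspert{\ell}, \stiffnesspert{\ell-1}^{-1}, \smoothingmatpert{\ell}^{\nu_{i}}$ carries an $\ell_{2}$-norm that grows like a negative power of $h_{\ell}$, while its deviation from the unperturbed version is only $O(\epsilon_{\ell})$. The compounded blowup is precisely what produces the factor $h_{\ell}^{-(d+2)}(h_{\ell}^{-4}+\nu_{1}+\nu_{2})$ in the smallness condition on $\epsilon_{\ell}$, and the separate assumption on $h_{0}$ is required because those negative powers are largest (relative to the number of points) at the coarsest level, where the direct solver is applied.
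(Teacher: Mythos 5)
Your overall strategy mirrors the paper's: bound $\multigridoppert{\ell}$ by the perturbed two-grid part plus the coarse-grid-correction part, compare perturbed to unperturbed matrices slot by slot via \cref{remark:telescoping}, and close with \cref{lem:recursivetau}. However, there is a genuine flaw in how you place the perturbation error within the recursion.

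The error term $E_{\ell}$ you define is the difference between two matrix products, both of which contain the common factor $\multigridmatpert{\ell-1}^{\tau}$. When you invoke the telescoping estimate from \cref{remark:telescoping}, that common factor drops out of the sum $\sum \|M_j - \check M_j\|/C_j$, but it does not drop out of the product $\prod C_j$: the resulting bound necessarily carries a factor of $\|\multigridmatpert{\ell-1}^{\tau}\|_{\ell_2\to\ell_2}$. The paper's proof records exactly this: $\|E\| \le C(h_{\ell-1}^{-2}+\nu_1)h_{\ell-1}^{-d}\epsilon_{\ell-1}\,\|\multigridmatpert{\ell-1}^{\tau}\|$. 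Your claimed bound $\|E_{\ell}\| \le C\,\epsilon_{\ell-1}h_{\ell-1}^{-(d+2)}(\nu_1+\nu_2+h_{\ell-1}^{-4})$ suppresses this factor, which is not justified without an a priori bound on $x_{\ell-1}$ — circular, since that is precisely what \cref{lem:recursivetau} is supposed to deliver. The consequence is that the perturbation contribution belongs in the coefficient $\beta$ of the recursion $x_{\ell}\le \alpha + \beta x_{\ell-1}^{\tau}$, not in $\alpha$ as you have it. The paper therefore ends up with $\beta = C_3 > \beta_{\text{Thm.\ref{thm:main}}}$ rather than $\beta = \beta_{\text{Thm.\ref{thm:main}}}$; the constant $C_4$ in the theorem statement exists precisely to absorb this enlargement, and the inequalities $C_1 + C_4\gamma \le \frac{\tau-1}{\tau}(C_3\tau)^{-1/(\tau-1)}$, $C_2 + C_4 \le \frac{\tau-1}{\tau}$ calibrate things accordingly. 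With your $\beta = \beta_{\text{Thm.\ref{thm:main}}}$, the hypothesis of \cref{lem:recursivetau} would be verified against the wrong $\beta$ and the argument does not close.

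A secondary inaccuracy: the assumption $h_0 \le (C^{-1}_{\text{Lem.\ref{lem:2step_pert}}}\min(C_1,\gamma C_2))^{-1/4}$ is not about stability of the direct solve at $\ell=0$ (which is trivial since $\multigridmatpert{0}=\vec{0}$ by definition). It is used to ensure that the smallness condition on $\epsilon_{\ell}$ already implies $\epsilon_{\ell} \le h_{\ell}^{d+2}$ without extra constants, which is the hypothesis of \cref{lem:2step_pert} needed to bound $\|\twogridmat{\ell}-\twogridmatpert{\ell}\|_{2\to2}$.
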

\begin{proof}
As in the proof of Theorem \ref{thm:main}, we make the estimate 
\begin{equation*}
\|\multigridoppert{\ell}  \| \le
\|\twogridoppert{\ell}   \| +
 \|\smoothingoppert{\ell}^{\nu_2} \sigma_{\ell}^*  \prolpert{\ell} 
		\multigridmatpert{\ell-1}^{\tau}    \stiffnesspert{\ell-1}^{-1} 
		   \respert{\ell}     \stiffnesspert{\ell}(\sigma_{\ell}^* )^{-1}
		\smoothingoppert{\ell}^{\nu_1} \|.
\end{equation*}		
Then (\ref{eq:perturb_nonexpansive}) ensures that 
$
\| \sigma_{\ell}^*\prolpert{\ell} 
		(\sigma_{\ell-1}^* )^{-1}		
		\multigridoppert{\ell-1}^{\tau}    
		\sigma_{\ell-1}^* 
		\stiffnesspert{\ell-1}^{-1} 
		   \respert{\ell}     \stiffnesspert{\ell}
		(\sigma_{\ell}^* )^{-1}
		\smoothingoppert{\ell}^{\nu_1}
\|
$	
controls the  second expression. 
Considering the difference
 $$E := \sigma_{\ell}^*\prolpert{\ell} 	
		\multigridmatpert{\ell-1}^{\tau}    
		\stiffnesspert{\ell-1}^{-1} 
		   \respert{\ell}     \stiffnesspert{\ell}
		(\sigma_{\ell}^* )^{-1}
		\smoothingoppert{\ell}^{\nu_1}
		-
\sigma_{\ell}^* \prolmat{\ell} 		
		\multigridmatpert{\ell-1}^{\tau}    
		\stiffness{\ell-1}^{-1} 
		   \resmat{\ell}  
		      \stiffness{\ell}
		(\sigma_{\ell}^* )^{-1}
		\smoothingop{\ell}^{\nu_1}	,$$
		 Remark \ref{remark:telescoping} 
gives
$$
\|E\|_{L_2\to L_2}
\le
C( h_{\ell-1}^{-2}+\nu_1) h_{\ell-1}^{-d}\epsilon_{\ell-1} \|\multigridmatpert{\ell-1}^{\tau}  \| _{\ell_2\to\ell_2}.
$$
Using the Riesz property,  this  gives
\begin{eqnarray*}
\|\multigridoppert{\ell}  \| 
&\le&
\|\twogridoppert{\ell}   \| \\
&&+
C \left( h^{-2}+\nu_1) h^{-d}\epsilon \|\multigridoppert{\ell-1}^{\tau}  \| 
+
\left\|\sigma_{\ell}^*\prolmat{\ell} 		
		\multigridmatpert{\ell-1}^{\tau}    
		\stiffness{\ell-1}^{-1} 
		   \resmat{\ell}  
		      \stiffness{\ell}
		(\sigma_{\ell}^* )^{-1}
		\smoothingop{\ell}^{\nu_1}		
\right\|
\right).
\end{eqnarray*}
As in the proof of Theorem \ref{thm:main}, the last normed expression can be bounded as
$$\left\|\sigma_{\ell}^*\prolmat{\ell} 		
		\multigridmatpert{\ell-1}^{\tau}    
		\stiffness{\ell-1}^{-1} 
		   \resmat{\ell}  
		      \stiffness{\ell}
		(\sigma_{\ell}^* )^{-1}
		\smoothingop{\ell}^{\nu_1}		
\right\|\le \|\multigridoppert{\ell-1}^{\tau}  \| \|P_{\Xi_{\ell-1}}\smoothingop{\ell}^{\nu_1}\|
\le C \|\multigridoppert{\ell-1}^{\tau}  \|.$$
Because $( h_{\ell}^{-2}+\nu_1) h_{\ell}^{-d}\epsilon_{\ell}$ is bounded by assumption,
it follows that 
$$
\|\multigridoppert{\ell} \| 
	\le 
	\|\twogridoppert{\ell}   \| +
C_3  \|\multigridoppert{\ell-1}\|^{\tau} .
$$

As by assumption $ \epsilon_{\ell} \le h_{\ell}^{d+2}$ (no constants due to $h \le h_0$) holds for all $\ell\le L$, we 
obtain by Lemma \ref{lem:2step_pert}
\begin{align*}
\| \twogridmatpert{\ell}  \|_{2\to 2} &\le \| \twogridmat{\ell} -\twogridmatpert{\ell}  \|_{2\to 2} + \| \twogridmat{\ell}  \|_{2\to 2} \\
&\le 
C_{\text{Lem.\ref{lem:2step_pert}}}  (\nu_2 +\nu_1 + h_{\ell-1}^{-4}) h_{\ell-1}^{-(d+2)} \epsilon_{\ell-1} + \| \twogridmat{\ell}  \|_{2\to 2}\\
 &\le \min(C_1,\gamma C_2) + \| \twogridmat{\ell}  \|_{2\to 2}.
\end{align*}
We use Theorem \ref{thm:main} and choose a natural number $\nu^{\star}_1$ large enough such that  the inequality $C_{\text{Prop.\ref{prop:boundtwogridop}}} g(\nu^{\star}_1) \le C_4\min\left\{ \frac{\tau-1}{\tau} (\beta_{\text{Thm.\ref{thm:main}}} \tau)^{-\frac{1}{\tau-1}}, \frac{\tau-1}{\tau} \gamma \right\}$ is satisfied.
Thus, we obtain 
\begin{align*}
\| \twogridmatpert{\ell}  \|_{2\to 2} \le  \min(C_1,\gamma C_2) +C_4 \gamma.
\end{align*}
We have 
\begin{align*}
	C_1 + C_4 \gamma \le \frac{\tau-1}{\tau} (C_3 \tau)^{-\frac{1}{\tau-1}} \quad \text{and} \quad  C_2+C_4 \le \frac{\tau-1}{\tau}.
\end{align*}
Thus, we define
$$
\beta:=C_3 \quad \text{and} \quad 
{\alpha}
	:= \max_{\ell\le L} \|\twogridoppert{\ell}   \| \le \min\left\{\frac{\tau-1}{\tau} (\beta \tau)^{-\frac{1}{\tau-1}}, \frac{\tau-1}{\tau} \gamma   \right\}.
	 \qquad
 $$
Hence Lemma \ref{lem:recursivetau} applies and the result follows.
\end{proof}

\section{Truncated multigrid method}\label{sec:trunc}
In this section we consider truncating the stiffness, prolongation and restriction matrices in order to improve the computational
complexity of the method. Each such matrix has stationary, exponential off-diagonal decay, so by retaining the $(\xi,\eta)$ entry 
when $\dist(\xi,\eta)\le Kh_{\ell} |\log h_{\ell}|$, and setting the rest to zero,  guarantees a small perturbation error (on the order
of $\mathcal{O}(h_{\ell}^J)$, where $J\propto K$). 
This is made precise in Lemma \ref{lem:stiffnestrunc} and Lemma \ref{lem:resttrunc} below, with
the aid of the following lemma.
\begin{lemma}\label{lem:series_est}
Suppose $\Xi\subset \M$, $c>0$, and $r\ge 2q(\Xi)$. Then for any $\eta\in \M$, we have
$$\sum_{\substack{\xi\in\Xi \\ \dist(\xi,\eta)>r}} e^{-c \ \dist(\xi,\eta)} 
\le  \frac{\beta_{\M}}{\alpha_{\M}}
\left(\frac{r}{q}\right)^d e^{-c {r}} \left(\sum_{j=0}^{\infty}   (j+2)^de^{-cjq}\right).$$
\end{lemma}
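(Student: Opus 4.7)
The plan is to bin the terms of the sum according to distance from $\eta$, specifically into annular shells of width $q$, and then bound the cardinality of each bin by comparing disjoint small balls around points of $\Xi$ to a single ball around $\eta$.

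First, I would define for $j\ge 0$ the annular shells
$$A_j := \bigl\{\xi\in\Xi \colon r+jq < \dist(\xi,\eta) \le r+(j+1)q\bigr\},$$
which together cover $\{\xi\in\Xi:\dist(\xi,\eta)>r\}$. On $A_j$, the integrand satisfies $e^{-c\dist(\xi,\eta)}\le e^{-c(r+jq)}$, so
$$\sum_{\substack{\xi\in\Xi \\ \dist(\xi,\eta)>r}} e^{-c\dist(\xi,\eta)}
\le \sum_{j=0}^{\infty} |A_j|\, e^{-c(r+jq)}.$$

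Next I would bound $|A_j|$ by a volume-packing argument. By the definition of the separation distance $q$, any two distinct points of $\Xi$ are at distance at least $2q$, so the open balls $\{B(\xi,q)\}_{\xi\in\Xi}$ are pairwise disjoint. For $\xi\in A_j$, the triangle inequality gives $B(\xi,q)\subset B(\eta,r+(j+2)q)$, so the measure bounds on $\M$ yield
$$|A_j|\,\alpha_{\M} q^d \le \sum_{\xi\in A_j} \mu(B(\xi,q)) \le \mu\bigl(B(\eta,r+(j+2)q)\bigr) \le \beta_{\M}\bigl(r+(j+2)q\bigr)^d,$$
hence $|A_j|\le \frac{\beta_{\M}}{\alpha_{\M}}\bigl(\tfrac{r}{q}+j+2\bigr)^d$.

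The final step is the elementary inequality $\tfrac{r}{q}+j+2 \le \tfrac{r}{q}(j+2)$, which uses the hypothesis $r\ge 2q$: indeed, it is equivalent to $(j+2)\bigl(\tfrac{r}{q}-1\bigr)\ge \tfrac{r}{q}$, i.e.\ $(j+2)/(j+1)\le r/q$, and $(j+2)/(j+1)\le 2$ for all $j\ge 0$. Substituting this into the bound for $|A_j|$ and pulling the factors $(r/q)^d e^{-cr}$ out of the sum yields exactly
$$\sum_{\substack{\xi\in\Xi \\ \dist(\xi,\eta)>r}} e^{-c\dist(\xi,\eta)}
\le \frac{\beta_{\M}}{\alpha_{\M}}\left(\frac{r}{q}\right)^d e^{-cr}\sum_{j=0}^{\infty} (j+2)^d e^{-cjq}.$$
The only nontrivial step is the packing bound on $|A_j|$, which is entirely analogous to the counting argument used to justify \cref{eq:finite_sum} and \cref{eq:numberofpoints}; the role of $r\ge 2q$ is merely to absorb the additive shift $j+2$ into a multiplicative factor $(j+2)$, so no subtle obstruction is expected.
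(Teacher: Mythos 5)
Your proof is correct and follows essentially the same route as the paper's: decompose into annular shells of width $q$, bound the cardinality of each shell, and absorb the additive term $j+2$ using $r\ge 2q$. The only difference is that you spell out the volume-packing argument for the shell cardinality, which the paper leaves implicit.
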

\begin{proof}
The underlying set can be decomposed as $\{\xi \in \Xi\mid \dist(\xi,\eta) \ge r\} = \bigcup_{j=0}^{\infty} \mathcal{A}_j$, 
where $\mathcal{A}_j = \{\xi\in\Xi\mid r+  jq \le \dist(\xi,\eta) < r+(j+1)q\}$ has cardinality
$|\mathcal{A}_j| \le \frac{\alpha_{\M}}{\beta_{\M}} \bigl(\frac{r}{q} + (j+2)\bigr)^d$.
It follows that 
$$\sum_{\substack{\xi\in\Xi \\ \dist(\xi,\eta)>r}} e^{-c \ \dist(\xi,\eta)}
\le 
 \frac{\beta_{\M}}{\alpha_{\M}}
  e^{-cr} \sum_{j=0}^{\infty}  \left(\frac{r}{q} + (j+2)\right)^de^{-cjq}
 $$
 and the lemma follows from the fact that $\frac{r}{q} + j+2\le \frac{r}{q}  (j+2)$ for all $j\ge 0$.
\end{proof}
 \paragraph{Truncated stiffness matrix}
The exponential decay in Lemma \ref{lem:diag_decay} motivates the truncation of the stiffness matrix, see e.g. \cite[Eq. (8.1)]{NRW}
We define for positive $K$, the truncation parameter $r_{\Xi}:=K h |\log(h)|$
\begin{align}
\stiffnesstrunc{\Xi}{r_{\Xi}}\in \R^{|\Xi| \times |\Xi|  }\quad \text{with}\quad
	(\stiffnesstrunc{\Xi}{r_{\Xi}})_{\xi,\eta}:=
		\begin{cases}
			A_{\xi,\eta}=a( \chi_{\xi},\chi_{\eta}), & \dist(\xi,\eta)\le r_{\Xi},\\
			0, & \dist(\xi,\eta)> r_{\Xi}.
		\end{cases}
\end{align}
We note that $\stiffnesstrunc{\Xi}{r_{\Xi}}$ is symmetric if $\stiffness{\Xi}$ is symmetric. 
By construction and quasi-uniformity, we obtain 
$
	| \{\xi \in B(\eta, r_{\Xi}) \cap \Xi  \}|
	 	 \le \rho^{d}h^{-d}_{\Xi,\M}\frac{ \beta_{\M} }{\alpha_{\M}}(r_{\Xi}+q)^{d} \le
	 2 \frac{\beta_{\M}}{\alpha_{\M}}\rho^{d}|K\log h|^{d}.
$
By $h^{-d} \le C |\Xi|$, this yields

$|\{\xi \in \Xi \, \mid \, (\stiffnesstrunc{\Xi}{r_{\Xi}})_{\xi,\eta} \neq 0\}
 \le
 2\frac{ \beta_{\M}}{\alpha_{\M}}\rho^{d} d^{d} K^{d} (\log| \Xi|)^{d}$.

In particular, we obtain 
\begin{align}\label{eq:truncmatrixvectorprodcomplex}
	\operatorname{FLOPS}(\vec{x} \mapsto \stiffnesstrunc{\Xi}{r_{\Xi}}\vec{x})  \le C_{comp}  K^{d} \log(| \Xi|)^{d} |\Xi|
\end{align}
for the number of operations for a matrix vector multiplication with the truncated stiffness matrix, with 
$C_{comp} =  2\frac{ \beta_{\M}}{\alpha_{\M}}\rho^{d} d^{d} $.
\begin{lemma}\label{lem:stiffnestrunc}
With the global parameter $C_{\text{tr}}:=\frac{\beta_{\M}}{\alpha_{\M}}\sum_{n=1}^{\infty} \left( n+2\right)^d e^{-\frac{\nu }{\rho}n}$,
 the estimate 
\begin{align}\label{eq:boundAminustruncA}
\|\stiffness{\Xi}-\stiffnesstrunc{\Xi}{r_{\Xi}}\|_{2\to 2} \le C_{\text{tr}}   C_{\text{stiff}} (K\rho |\log(h)|)^d h^{\frac{\nu}{2}K -2}
\end{align}
holds. 
\end{lemma}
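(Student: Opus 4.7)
The plan is to bound the $\ell_2 \to \ell_2$ operator norm of the difference $\stiffness{\Xi} - \stiffnesstrunc{\Xi}{r_{\Xi}}$ via the Schur test, by controlling the maximum row and column absolute sums. First I observe that the difference matrix has entries
$$
\bigl(\stiffness{\Xi}-\stiffnesstrunc{\Xi}{r_{\Xi}}\bigr)_{\xi,\eta}
=
\begin{cases} a(\chi_\xi,\chi_\eta), & \dist(\xi,\eta)>r_{\Xi},\\ 0, & \text{otherwise,}\end{cases}
$$
so by \cref{lem:diag_decay} each nonzero entry satisfies
$\bigl|(\stiffness{\Xi}-\stiffnesstrunc{\Xi}{r_{\Xi}})_{\xi,\eta}\bigr|\le C_{stiff}\, h^{d-2}\, e^{-\frac{\nu}{2h}\dist(\xi,\eta)}$.

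Second, I fix any $\eta\in\Xi$ and estimate the row sum over $\{\xi\in\Xi : \dist(\xi,\eta)>r_{\Xi}\}$. Setting $c=\nu/(2h)$, $r=r_{\Xi}=Kh|\log h|$, and noting $q=h/\rho\le r/2$ when $h$ is small enough so that $K|\log h|\ge 2\rho$, I apply \cref{lem:series_est} to obtain
$$
\sum_{\substack{\xi\in\Xi\\ \dist(\xi,\eta)>r_{\Xi}}}\!\!\! e^{-\frac{\nu}{2h}\dist(\xi,\eta)}
\le \frac{\beta_{\M}}{\alpha_{\M}}\bigl(K\rho|\log h|\bigr)^{d}\, e^{-\frac{\nu}{2h}\cdot Kh|\log h|}\sum_{j=0}^{\infty}(j+2)^d e^{-\frac{\nu}{2\rho} j}.
$$
Since $e^{-\frac{\nu K}{2}|\log h|}=h^{\frac{\nu K}{2}}$ for $h<1$, this row sum is controlled by $C_{\text{tr}}\bigl(K\rho|\log h|\bigr)^{d}\, h^{\frac{\nu K}{2}}$ (absorbing the geometric-series factor into $C_{\text{tr}}$ as stated, where the $\nu/\rho$ versus $\nu/(2\rho)$ discrepancy is harmless upon enlarging the constant).

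Third, because this bound is independent of $\eta$, and an identical computation (swapping the roles of row and column) yields the same bound for column sums, the Schur test
$\|M\|_{\ell_2\to\ell_2}\le \sqrt{\|M\|_{\ell_1\to\ell_1}\,\|M\|_{\ell_\infty\to\ell_\infty}}$ applies.  Multiplying by the entrywise prefactor $C_{stiff}\,h^{d-2}$ from \cref{lem:diag_decay}, we obtain
$$
\|\stiffness{\Xi}-\stiffnesstrunc{\Xi}{r_{\Xi}}\|_{2\to 2}
\le C_{\text{tr}}\,C_{stiff}\,\bigl(K\rho|\log h|\bigr)^{d}\, h^{\frac{\nu K}{2}-2},
$$
which is the claimed estimate (again absorbing the $h^{d}$ that arises from the entrywise prefactor combined with the entrywise decay; the intended scaling is that $h^{d-2}$ times a sum over $\mathcal{O}(|\log h|^d)$ surviving points gives the displayed exponent after also benefiting from exponential tail decay).

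The only nontrivial step is the estimation of the tail series, which is precisely the content of \cref{lem:series_est}: the number of $\Xi$-points in a shell of thickness $q$ at distance $r+jq$ grows like $\bigl((r+jq)/q\bigr)^d$ by quasi-uniformity, and this polynomial growth is defeated by the exponential decay at rate $\nu/(2h)$. The rest is bookkeeping of constants, with no further analytic input required.
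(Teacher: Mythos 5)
Your proof follows the paper's argument essentially verbatim: entrywise bound from \cref{lem:diag_decay}, control of the $\ell_2\to\ell_2$ norm via the Schur/Riesz--Thorin bound through row and column sums, and the tail sum estimate of \cref{lem:series_est} with $r=r_{\Xi}=Kh|\log h|$ and $c=\nu/(2h)$. You are in fact slightly more careful than the paper itself, which appears to have dropped the $h^{d}$ factor coming from the $h^{d-2}$ prefactor of \cref{lem:diag_decay} (the paper's first displayed line writes $C_{\text{stiff}}h^{-2}$), whereas you explicitly note this extra power of $h$ is favorable and can be absorbed; you also correctly flag the $\nu/\rho$ versus $\nu/(2\rho)$ mismatch in the constant $C_{\text{tr}}$ as a harmless constant adjustment.
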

\begin{proof}
The proof for the first statement is essentially given in \cite[Prop 8.1]{NRW}.
Using Lemma \ref{lem:diag_decay},  we observe, by symmetry, that
	$\|\stiffness{\Xi}-\stiffnesstrunc{\Xi}{r_{\Xi}}\|_{p\to p}$ is controlled by the maximum of the $\ell_1$ and $\ell_{\infty}$ matrix norms,
	which can be controlled by row and column sums.  
	This leads to off-diagonal sums 
	$
	\max_{\eta \in \Xi} \sum_{\xi \in \Xi \cap B^{\complement}(\eta, r_{\Xi}) } \left|(\stiffness{\Xi})_{\xi,\eta} \right|$
	and $
	\max_{\xi \in \Xi} \sum_{\eta \in \Xi \cap B^{\complement}(\eta, r_{\Xi}) } \left|(\stiffness{\Xi})_{\xi,\eta} \right| .
	$
	Applying Lemma \ref{lem:series_est} with $r=r_{\Xi} = Kh|\log h|$ and $c=\frac{\nu}{2h}$ yields
\begin{align*}
        \|\stiffness{\Xi}-\stiffnesstrunc{\Xi}{r_{\Xi}}\|_{p\to p}
	&\le  
	C_{\text{stiff}} h^{-2}  \max_{\xi \in \Xi} \sum_{\eta \in \Xi \cap B^{\complement}(\xi, r_{\Xi}) }  e^{-\frac{\nu}{2} \frac{\dist(\xi,\eta)}{h}}\\
	&\le
	C_{\text{stiff}}\frac{\beta_{\M}}{\alpha_{\M}}\left(\frac{K h |\log h|}{q}\right)^d h^{\frac{K\nu}{2}-2}  \left(\sum_{j=0}^{\infty} (j+2)^d e^{-\frac{\nu}{2h} jq}\right).
\end{align*}
\end{proof}

 \paragraph{Truncated prolongation and restriction matrices}
We introduce  truncated prolongation matrices  
$\prolmattrunc{\ell}{r_{\ell}} \in \R^{n_{\ell} \times n_{\ell-1}}$,  with
\begin{align*} 
	(\prolmattrunc{\ell}{r_{\ell}})_{\xi \in \Xi_{\ell-1},\eta\in \Xi_{\ell}}:= \begin{cases}
			(\prolmat{\ell})_{\xi,\eta}= \chi^{(\ell-1)}_{\xi}(\eta), & \dist(\xi,\eta)\le r_{\ell},\\
			0, & \dist(\xi,\eta)> r_{\ell},
		\end{cases}
\end{align*}
where we use the notation $r_{\ell}:=r_{\Xi_{\ell-1}}$.
Likewise, we define 
$\resmattrunc{\ell}{r_{\ell}}:=\left(\prolmattrunc{\ell}{r_{\ell}}\right)^T$.
For the numerical costs, we obtain
\begin{align}\label{eq:costsprol}
\max\left\{ \operatorname{FLOPS}(\vec{x} \mapsto \prolmattrunc{\ell}{r_{\ell}} \vec{x}),\operatorname{FLOPS}(\vec{x} \mapsto \resmattrunc{\ell}{r_{\ell}} \vec{x}) \right\}  =\mathcal{O}\left( K^{d} \log(| \Xi_{\ell}|)^{d} |\Xi_{\ell}|\right),
\end{align}
where we use that  
$|\Xi_{\ell}|\sim h_{\ell}^{-d}=\gamma^{d} h^{d}_{\ell-1} \sim |\Xi_{\ell-1}|\sim \gamma^{d} |\Xi_{\ell-1}|$ due to (\ref{eq:gridconst}).

\begin{lemma}\label{lem:resttrunc}
We have 
\begin{align*}
	 \left\|\prolmat{\ell}-\prolmattrunc{\ell}{r_{\ell}} \right\|_{\ell_{2}(\Xi_{\ell-1}) \to \ell_{2}(\Xi_{\ell}) }
	  \le \Cpw C_{\text{trunc}} (K |\log(h_{\ell})|)^d h^{\frac{\nu}{2}K }_{\ell}.
\end{align*}
\end{lemma}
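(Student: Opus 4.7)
The plan is to mirror closely the proof of \cref{lem:stiffnestrunc}, replacing \cref{lem:diag_decay} by the pointwise Lagrange decay \cref{eq:ptwise_decay} as the source of off-diagonal decay. The difference matrix $\prolmat{\ell}-\prolmattrunc{\ell}{r_{\ell}}$ has $(\xi,\eta)$-entry equal to $\chi^{(\ell-1)}_{\xi}(\eta)$ whenever $\dist(\xi,\eta)>r_{\ell}$ and zero otherwise. By \cref{eq:ptwise_decay} applied to the Lagrange basis at level $\ell-1$, each such nonzero entry is bounded by $C_{pw}\exp(-\nu\dist(\xi,\eta)/h_{\ell-1})$.

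The second step is to pass from the entrywise bound to the operator norm. Since $\|M\|_{2\to 2}\le\sqrt{\|M\|_{1\to 1}\|M\|_{\infty\to\infty}}$, it suffices to control the largest absolute column sum (over $\xi\in\Xi_{\ell-1}$ with $\eta\in\Xi_{\ell}$ fixed) and the largest absolute row sum (over $\eta\in\Xi_{\ell}$ with $\xi\in\Xi_{\ell-1}$ fixed) of the tail matrix. Both are of the form
\begin{equation*}
C_{pw}\sum_{\substack{\zeta\in\Xi_{\star}\\ \dist(\zeta,\ast)>r_{\ell}}} e^{-\nu\dist(\zeta,\ast)/h_{\ell-1}},
\end{equation*}
where $\Xi_{\star}\in\{\Xi_{\ell-1},\Xi_{\ell}\}$ is quasi-uniform with mesh ratio at most $\rho$ and fill distance comparable to $h_{\ell-1}$ (after absorbing the refinement constant $\gamma_1$ when $\Xi_{\star}=\Xi_{\ell}$).

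The third step applies \cref{lem:series_est} with $c=\nu/h_{\ell-1}$ and $r=r_{\ell}=Kh_{\ell-1}|\log h_{\ell-1}|$ (noting that $r\ge 2q(\Xi_{\star})$ for $h$ small enough). This yields
\begin{equation*}
C_{pw}\,\frac{\beta_{\M}}{\alpha_{\M}}\!\left(\frac{r_{\ell}}{q_{\star}}\right)^{\!d}\! e^{-\nu r_{\ell}/h_{\ell-1}}\sum_{j=0}^{\infty}(j+2)^d e^{-\nu j q_{\star}/h_{\ell-1}}\le C_{pw}\,C_{\text{trunc}}\,(\rho K|\log h_{\ell-1}|)^d\, h_{\ell-1}^{\nu K},
\end{equation*}
with $C_{\text{trunc}}=\tfrac{\beta_{\M}}{\alpha_{\M}}\sum_{n\ge 0}(n+2)^d e^{-\nu n/\rho}$, exactly as in the derivation of \cref{eq:boundAminustruncA}.

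The only bookkeeping obstacle is that the statement is written in terms of $h_{\ell}$ with exponent $\nu K/2$, while the natural decay produces $h_{\ell-1}^{\nu K}$. This is reconciled using \cref{eq:gridconst}: from $\gamma_1 h_{\ell-1}\le h_{\ell}\le\gamma_2 h_{\ell-1}$ one has $|\log h_{\ell-1}|\le|\log h_{\ell}|+|\log\gamma_1|$ and, for $h_{\ell-1}$ below a threshold depending only on $\gamma_2$, the inequality $h_{\ell-1}^{\nu K}\le h_{\ell}^{\nu K/2}$. Absorbing these level-independent factors into the constants (and into $C_{pw}C_{\text{trunc}}$) yields the claimed estimate. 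This is the only nontrivial bookkeeping step; the rest is a direct transcription of the stiffness truncation argument.
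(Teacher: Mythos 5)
Your proof takes the paper's route exactly: bound the tail entries via the pointwise Lagrange decay \cref{eq:ptwise_decay}, estimate the $\ell_1\to\ell_1$ and $\ell_\infty\to\ell_\infty$ norms through row and column sums controlled by \cref{lem:series_est}, and interpolate via $\|M\|_{2\to 2}\le\sqrt{\|M\|_{1\to 1}\|M\|_{\infty\to\infty}}$, followed by the $h_{\ell-1}\!\leftrightarrow\! h_{\ell}$ bookkeeping from \cref{eq:gridconst}. The one small slip is in the final reconciliation: $h_{\ell-1}^{\nu K}\le h_{\ell}^{\nu K/2}$ is equivalent to $h_{\ell-1}^2\le h_{\ell}$, which follows from the \emph{lower} bound $h_{\ell}\ge\gamma_1 h_{\ell-1}$ once $h_{\ell-1}\le\gamma_1$, so the threshold depends on $\gamma_1$ rather than $\gamma_2$.
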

\begin{proof}
We proceed as in the proof of (\ref{eq:boundAminustruncA}).
We can estimate row and column sums of 
$\vec{E}:=\prolmat{\ell}-\prolmattrunc{\ell}{r_{\ell}}$ by (\ref{eq:ptwise_decay}), obtaining
$$
\left\|\vec{E} \right\|_{\infty\to \infty}
\le \Cpw \sum_{\xi\in\Xi_{\ell-1}\cap B^{\complement}(\eta,r_{\ell}) }  
e^{-\nu \frac{\dist(\eta,\xi)}{h_{\ell-1}}},$$
so, by Lemma \ref{lem:series_est}
with $r=r_{\ell} = Kh_{\ell}|\log h_{\ell}|$ and $c=\frac{\nu}{h_{\ell-1}}$, we have
$$
\left\|\vec{E} \right\|_{\infty\to \infty}
\le 
\Cpw
 \frac{\beta_{\M}}{\alpha_{\M}}
\left(  K\rho |\log h_{\ell-1}|\right)^d  (h_{\ell-1})^{K\nu} \left(\sum_{j=0}^{\infty}   (j+2)^de^{-\frac{\nu j}{ \rho}}\right).
$$
 Likewise,
$
\left\|\vec{E} \right\|_{1 \to 1}
\le  
\Cpw
\sum_{\eta\in\Xi_{\ell}\cap B^{\complement}(\xi,r_{\ell}) }  
e^{-\nu \frac{\dist(\eta,\xi)}{h_{\ell-1}}}$.
Lemma \ref{lem:series_est} yields this time 
with $r=r_{\ell} = Kh_{\ell}|\log h_{\ell}|$ and $c=\frac{\nu}{h_{\ell-1}}$, the estimate
$$
\left\|\vec{E} \right\|_{1\to 1}
\le 
\Cpw
 \frac{\beta_{\M}}{\alpha_{\M}}
\left(  K \frac{h_{\ell-1}}{q_{\ell}} |\log h_{\ell-1}|\right)^d  (h_{\ell-1})^{K\nu} \left(\sum_{j=0}^{\infty}   (j+2)^de^{-\frac{\nu j q_\ell }{ h_{\ell-1}}}\right).
$$
Thus, we get
\begin{align*}
\max \left\{\left\|\vec{E} \right\|_{\infty\to \infty},\left\|\vec{E}\right\|_{1\to 1} \right\}
&\le
\Cpw C_{\text{trunc}} (K \rho \gamma  |\log(h_{\ell-1})|)^d h_{\ell-1}^{\frac{\nu}{2}K }.
\end{align*}
Interpolation finishes the proof.
\end{proof}
 \paragraph{Truncated $\tau$-cycle}
We now consider the multigrid method using truncated versions of the stiffness, prolongation and restriction  matrices.
We denote this by $\mgmtrunc^{(\tau)}_{\ell}$, and use it  to solve 
(\ref{eq:trruncatedsystem}) with $\stiffnesspert{\ell} =\stiffnesstrunc{\ell}{r_{\ell}}$.
Lemmas \ref{lem:stiffnestrunc} and \ref{lem:resttrunc} show that conditions for Theorem \ref{thm:theorem_pert} are satisfied when $K$ is chosen sufficiently large.

\begin{theorem}\label{thm:truncated_complexity}
 If $\tau \gamma^d <1$, we obtain
\begin{align}\label{eq:mgmtaucost}
	\operatorname{FLOPS}(\vec{x} \mapsto \mgmtrunc^{(\tau)}_{\ell}(\vec{x},\vec{b}_{\ell} ))=\mathcal{O} (N_{\ell} \log(N_{\ell})^{d}).
\end{align}
\end{theorem}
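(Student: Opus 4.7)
The plan is to set up a one-level recurrence for the FLOPS count and then unroll it as a geometric series, whose convergence is guaranteed precisely by the hypothesis $\tau \gamma^d <1$. Let $T_{\ell}$ denote the number of floating point operations required by a single call to $\mgmtrunc^{(\tau)}_{\ell}$. Inspecting \cref{alg:mg_tau} with the truncated operators substituted in, every non-recursive line consists of either (i) a constant number of damped-Jacobi steps, each of which requires one multiplication by $\stiffnesstrunc{\ell}{r_\ell}$ plus $\mathcal{O}(N_\ell)$ diagonal work, (ii) one residual evaluation, (iii) one restriction $\respert{\ell}$, (iv) one prolongation $\prolpert{\ell}$, or (v) a vector update. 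By \cref{eq:truncmatrixvectorprodcomplex} and \cref{eq:costsprol}, each of these contributes $\mathcal{O}(K^d \log(N_\ell)^d N_\ell)$, so the total non-recursive cost at level $\ell\ge 1$ is bounded by $C\, N_\ell \log(N_\ell)^d$ with $C$ absorbing the fixed quantities $\nu_1$, $\nu_2$, $K$, $d$ and the geometric constants. At level $0$ the direct solve costs $T_0 = \mathcal{O}(1)$, since $N_0$ is a universal constant. This gives
\begin{equation*}
T_\ell \le \tau\, T_{\ell-1} + C\, N_\ell \log(N_\ell)^d,\qquad \ell\ge 1.
\end{equation*}

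The second ingredient is the grid-reduction identity derived from \cref{eq:gridconst}. Since $h_{\ell-1}\ge h_\ell/\gamma_2$ and $N_\ell \sim h_\ell^{-d}$, we have $N_{\ell-1}\le \gamma^{d} N_\ell$ where $\gamma:=\gamma_2$, and iterating gives $N_{\ell}\le \gamma^{d(L-\ell)} N_L$ as well as $\log N_\ell \le \log N_L$. Unrolling the recurrence from level $L$ down to level $0$ yields
\begin{equation*}
T_L \le \tau^L T_0 + C \log(N_L)^d N_L \sum_{\ell=1}^{L} (\tau \gamma^d)^{\,L-\ell}.
\end{equation*}

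The hypothesis $\tau \gamma^d <1$ bounds the geometric sum by $(1-\tau\gamma^d)^{-1}$, so the main term is $\mathcal{O}(N_L\log(N_L)^d)$. It remains only to absorb the startup term $\tau^L T_0$. Since $L\sim \log(N_L)/(d\log(1/\gamma))$ by \cref{eq:gridconst,eq:numberofpoints}, we have $\tau^L = N_L^{\,\log\tau/(d\log(1/\gamma))}$, and the condition $\tau\gamma^d<1$ is exactly $\log\tau < d\log(1/\gamma)$, so this exponent is strictly less than one and $\tau^L = o(N_L)$. Hence both terms are absorbed into $\mathcal{O}(N_L\log(N_L)^d)$, which is the claim.

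The only non-mechanical point is the need for the ratio $\tau\gamma^d$ to be strictly less than one: if it equals one, the geometric sum contributes an additional factor of $L\sim \log N_L$ (harmless up to an extra log factor), while if it exceeds one, the recursive cost at the coarser levels swamps that of the finest level and destroys the log-linear complexity. This is the content of the hypothesis and the only conceptual obstruction in the argument; the remaining steps are the bookkeeping established in the preceding subsections on sparsity of the truncated matrices.
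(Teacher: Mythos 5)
Your proof is correct and follows essentially the same route as the paper: bound the per-level work by $\mathcal{O}(N_\ell \log(N_\ell)^d)$ using the sparsity counts \cref{eq:truncmatrixvectorprodcomplex,eq:costsprol}, set up the one-step recurrence with branching factor $\tau$, unroll into a geometric series whose ratio is $\tau\gamma^d$, and invoke $\tau\gamma^d<1$. One small point of difference: you bound the level-$\ell$ work relative to the finest level $h_L$ (via $N_{L-j}\le\gamma^{dj}N_L$ and $\log N_{L-j}\le\log N_L$), whereas the paper bounds relative to $h_0$ and then needs a H\"older split of the log term; your version is slightly cleaner and also handles the startup term $\tau^L T_0 = N_L^{\log\tau/(d\log(1/\gamma))} = o(N_L)$ explicitly, which the paper leaves implicit.
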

\begin{proof}
Define the floating point operation count for the truncated 
multigrid\\ method by
${\tt M}_{\ell}:=\operatorname{FLOPS}(\vec{x} \mapsto \mgmtrunc^{(\tau)}_{\ell}(\vec{x},\vec{b}_{\ell} ))$.

By estimates (\ref{eq:truncmatrixvectorprodcomplex}) and (\ref{eq:costsprol}), 
the quantities
${\tt P}_{\ell} :=\operatorname{FLOPS}(\vec{x} \mapsto \prolmattrunc{\ell}{r_{\ell}} \vec{x})$,
${\tt R}_{\ell} :=\operatorname{FLOPS}(\vec{x} \mapsto \resmattrunc{\ell}{r_{\ell}} \vec{x}) $,
and
${\tt A}_{\ell} :=\operatorname{FLOPS}(\vec{x} \mapsto \stiffnesstrunc{\ell}{r_{\ell}} \vec{x}) $
are each bounded by $C K^{d} \log(| \Xi_{\ell}|)^{d} |\Xi_{\ell}|$.
Because each Jacobi iteration involves multiplication by a matrix with the same number of nonzero entries, we note
that 
$${\tt S}_{\ell}:=\operatorname{FLOPS}\bigl(x\mapsto J(x,b)\bigr) \le C K^{d} \log(| \Xi_{\ell}|)^{d} |\Xi_{\ell}|$$ as well.
From this, we have the recursive formula
$$
	{\tt M}_{\ell} =
	{\tt P}_{\ell} + 
	{\tt R}_{\ell} +
	{\tt A}_{\ell} +
	(\nu_1+\nu_2) {\tt S}_{\ell} 
	+\tau {\tt M}_{\ell-1}.
	$$

Applying (\ref{eq:truncmatrixvectorprodcomplex}) and (\ref{eq:costsprol}) gives
$
	{\tt M}_{\ell}\le 
	  C K^{d}   (\nu_1+\nu_2+3)\left( |\log h_{\ell} |^{d} h^{-d}_{\ell}\right)
	 +\tau {\tt M}_{\ell-1}.
$
	
By setting $w_{\ell}= 
\left( |\log h_{\ell} |/ h_{\ell}\right)^d$ and $\tilde{C}: = C  K^{d}   (\nu_1+\nu_2+3)$, we have the recurrence 
$$
{\tt M}_{\ell}\le 
	  \tilde{C}w_{\ell}
	 +\tau {\tt M}_{\ell-1} 
	 \quad
	 \longrightarrow \quad{\tt M}_{\ell} \le  \tilde{C}\sum_{k=0}^{\ell-1} w_{\ell-k}\tau^k +\tau^\ell {\tt M}_0
	 $$
Note that 
$
w_{\ell}\le (|\log h_0|+ \ell |\log \gamma|)^{d}    \gamma^{-d\ell} h_0^{-d} 
$, 
since  $h_{\ell} \le \gamma^{\ell} h_0$. 
By H{\"o}lder's inequality, we have  
$(|\log h_0|+ \ell |\log \gamma|)^{d}\le 2^{\frac{d-1}{d}}(|\log h_0|^d+ \ell |\log \gamma|^d)$,
which 
provides the estimate
$w_{\ell-k} \le  2^{\frac{d-1}{d}}h_0^{-d}   \gamma^{-d\ell}  \gamma^{dk}(|\log h_0|^d+\ell (1-k/\ell) |\log \gamma|^d)$.

Applying this to the above estimate for ${\tt M}_{\ell}$ gives, 
\begin{align*}
	{\tt M}_{\ell}
	&\le    \tilde{C}   h_0^{-d}\gamma^{-d\ell}   \left(|\log h_0|^d\sum_{k=0}^{\ell-1} (\tau \gamma^{d})^k
	+ |\log \gamma|^{d} \ell^d \sum_{k=0}^{\ell-1} (1-k/\ell)^d  (\tau \gamma^{d})^k  \right)\\
	&\phantom{\le}+\tau^\ell {\tt M}_0\\
	&\le
	   \tilde{C}   h_0^{-d}\gamma^{-d\ell}   \left|\log h_0 + \log \gamma^{\ell} \right|^d+\tau^\ell {\tt M}_0
	   \le \tilde{C} h^{-d} (|\log h|)^{d}+\tau^\ell {\tt M}_0.
	\end{align*}
The result follows by taking $(\gamma^\ell h_0)^{-d}\le C h^{-d}$ and $(|\log h_0|+\ell|\log \gamma| )  \sim |\log h|$.
\end{proof}

\begin{remark}\label{remark:final}
The kernel-based Galerkin problem $\stiffnesstrunc{L}{r_{L}}  \check{\vec{u}}^{\star}_{L}=\vec{b}_{L}$, can be solved stably to any precision $\epsilon_{\max}$,
 by iterating  $\mgmtrunc^{(\tau)}_{L}(\check{\vec{u}}_{L},\vec{b}_{L} )$, i.e., the truncated multigrid with $\tau\ge 2$ cycle. 
 Select $\gamma<1$ and fix $\nu_1$ so that Theorem \ref{thm:theorem_pert} holds.
 Let $\check{\vec{u}}_{L}^{(k+1)} =  \mgmtrunc^{(\tau)}_{L}(\check{\vec{u}}_{L}^{(k)},\vec{b}_{L} )$.
 If $k$ is the least integer satisfying  $\gamma^k\|\check{\vec{u}}_{L}^{\star}-\check{\vec{u}}_{L}^{(0)}\|_{\ell_2}<\epsilon_{\max}$, 
 then 
 \begin{equation*}
 k\sim \frac{1}{\log \gamma} \log\left(\frac{\epsilon_{\max}}{\|\check{\vec{u}}_{L}^{\star}-\check{\vec{u}}_{L}^{(0)}\|_{\ell_2}}\right).
 \end{equation*}
 Due to Theorem \ref{thm:truncated_complexity}, we obtain an overall complexity of 
 \begin{equation*}
 \mathcal{O} \left( \frac{1}{\log \gamma} \log\left(\frac{\epsilon_{\max}}{\|\check{\vec{u}}_{L}^{\star}-\check{\vec{u}}_{L}^{(0)}\|}\right) N_{L} \log(N_{L})^{d}\right).
 \end{equation*}
 We note that
 $
 \|\check{\vec{u}}_{L}^{\star}-\check{\vec{u}}_{L}^{(k)}\|_{\vec{A}_L}
 \sim 
 \|\sigma_L^*\left( \check{\vec{u}}_{L}^{\star}-\check{\vec{u}}_{L}^{(k)} \right)\|_{W_2^1}
 \le 
\CB h^{d/2-1} \|\check{\vec{u}}_{L}^{\star}-\check{\vec{u}}_{L}^{(k)}\|_{\ell_2},
$
 and since $d\ge2$, achieving $\|\check{\vec{u}}_{L}^{\star}-\check{\vec{u}}_{L}^{(k)}\|_{\vec{A}_L}<\epsilon_{\max}$ also requires only a fixed number of iterations. This is repeated below in  statement  (\ref{eq:mgm_iteration_count}).
 \end{remark}

Indeed, using $k$
steps  of the Conjugate Gradient method on the original system (\ref{eq:stiffness_equation}), would give error 
$\|\bar{\vec{u}}^{(k)}_{L}-\vec{u}^{\star}_{L}\|_{\vec{A}_{L}}\le \left(\frac{CN^{2/d}_{L}  -1}{CN^{2/d}_{L}  +1}\right)^k\|\vec{u}^{\star}_{L}-\vec{u}^{(0)}\|_{\vec{A}_{L}}$. 
Thus to ensure a tolerance of  $\|\bar{\vec{u}}^{(k)}_{L}-\vec{u}^{\star}_{L}\|_{\vec{A}_{L}}\le\epsilon_{\max}$, one would need
\begin{equation*}
k
\sim 
\frac{1}{\log (1-\tilde{C} N^{-2/d}_{L} )} \log\left(\frac{\epsilon_{\max}}{\|\bar{\vec{u}}^{(0)}_{L}-\vec{u}^{\star}_{L}\|}\right)
\sim 
\mathcal{O}\left( N^{2/d}_{L} \log\left(\frac{\epsilon_{\max}}{\|\bar{\vec{u}}^{(0)}_{L}-\vec{u}^{\star}_{L} \|}\right)\right) 
\end{equation*}
steps, where we use $\left(\frac{CN^{2/d}_{L}  -1}{CN^{2/d}_{L}  +1}\right) \sim \left( 1-\tilde{C} N^{-2/d}\right)$.

In contrast to this, the multigrid $W$-cycle requires only 
\begin{equation}\label{eq:mgm_iteration_count}
k\sim \left|\log\left(\frac{\epsilon_{\max}} {\|\check{\vec{u}}^{(0)}_{L}-\check{\vec{u}}^{\star}_{L}\|} \right)\right|
\end{equation}
iterations to achieve error
 $\|\check{\vec{u}}^{(k)}_{L}-\check{\vec{u}}^{\star}_{L}\|_{\vec{A}_{L}}\le \epsilon_{\max}$.
In fact, it reaches $\|\check{\vec{u}}^{(k)}_{L}-\check{\vec{u}}^{\star}_{L}\|_{\ell_2}\le \epsilon_{\max}$, which is a stronger constraint, within $k$ iterations. 
In particular, the number of iterations is independent of the size $N_L$ of the problem.

\section*{Acknowledegment}
The authors wish to thank the anonymous reviewers for their careful reading and helpful comments.

\bibliographystyle{elsarticle-num} 

\bibliography{mg}

\begin{thebibliography}{10}
\expandafter\ifx\csname url\endcsname\relax
  \def\url#1{\texttt{#1}}\fi
\expandafter\ifx\csname urlprefix\endcsname\relax\def\urlprefix{URL }\fi
\expandafter\ifx\csname href\endcsname\relax
  \def\href#1#2{#2} \def\path#1{#1}\fi

\bibitem{dziuk_elliott_2013}
G.~Dziuk, C.~M. Elliott, Finite element methods for surface {PDE}s, Acta Numer.
  22 (2013) 289--396.

\bibitem{Shankar:etal:2020}
V.~Shankar, G.~B. Wright, A.~Narayan, A robust hyperviscosity formulation for
  stable {RBF}-{FD} discretizations of advection-diffusion-reaction equations
  on manifolds, SIAM J. Sci. Comput. 42~(4) (2020) A2371--A2401.

\bibitem{LeGia:etal:2012}
Q.~T. Le~Gia, I.~H. Sloan, H.~Wendland, Multiscale {RBF} collocation for
  solving {PDE}s on spheres, Numer. Math. 121~(1) (2012) 99--125.

\bibitem{legia:2004}
Q.~{Le Gia},
  \href{https://www.sciencedirect.com/science/article/pii/S0021904504001388}{Galerkin
  approximation for elliptic PDEs on spheres}, Journal of Approximation Theory
  130~(2) (2004) 125--149.

\bibitem{wendland:kuehnemund:2020}
H.~Wendland, J.~K{\"u}nemund,
  \href{https://doi.org/10.1007/s10444-020-09803-0}{Solving partial
  differential equations on (evolving) surfaces with radial basis functions},
  Advances in Computational Mathematics 46~(4) (2020) 64.

\bibitem{sun:ling:2022}
Z.~Sun, L.~Ling, \href{https://doi.org/10.1137/21M1436981}{A kernel-based
  meshless conservative Galerkin method for solving Hamiltonian wave
  equations}, SIAM Journal on Scientific Computing 44~(4) (2022) A2789--A2807.

\bibitem{KNWW}
J.~K\"{u}nemund, F.~J. Narcowich, J.~D. Ward, H.~Wendland, A high-order
  meshless {G}alerkin method for semilinear parabolic equations on spheres,
  Numer. Math. 142~(2) (2019) 383--419.

\bibitem{NRW}
F.~J. Narcowich, S.~T. Rowe, J.~D. Ward, A novel {G}alerkin method for solving
  {PDE}s on the sphere using highly localized kernel bases, Math. Comp.
  86~(303) (2017) 197--231.

\bibitem{multicloud}
A.~Katz, A.~Jameson, Multicloud: multigrid convergence with a meshless
  operator, J. Comput. Phys. 228~(14) (2009) 5237--5250.

\bibitem{MGM}
G.~B. Wright, A.~Jones, V.~Shankar,
  \href{https://doi.org/10.1137/22M1490338}{Mgm: A meshfree geometric
  multilevel method for systems arising from elliptic equations on point cloud
  surfaces}, SIAM Journal on Scientific Computing 45~(2) (2023) A312--A337.


\bibitem{hackbusch1994iterative}
W.~Hackbusch, Iterative solution of large sparse systems of equations, Vol.~95,
  Springer, 1994.

\bibitem{brenner-scott}
S.~C. Brenner, L.~R. Scott, The Mathematical Theory of Finite Element Methods,
  Springer New York, NY, 2007, edition Number 3.

\bibitem{reuskenLN}
A.~Reusken,
  \href{\small{https://www.igpm.rwth-aachen.de/Download/reports/reusken/ARpaper53.pdf}}{Introduction
  to multigrid methods for elliptic boundary value problems},  (2008). Technical report
available at \url{\scriptsize{https://www.igpm.rwth-aachen.de/Download/reports/reusken/ARpaper53.pdf}}

\bibitem{FHNWW}
E.~Fuselier, T.~Hangelbroek, F.~J. Narcowich, J.~D. Ward, G.~B. Wright,
  Localized bases for kernel spaces on the unit sphere, SIAM J. Numer. Anal.
  51~(5) (2013) 2538--2562.

\bibitem{HNRW-manifold}
T.~Hangelbroek, F.~J. Narcowich, C.~Rieger, J.~D. Ward, Direct and inverse
  results on bounded domains for meshless methods via localized bases on
  manifolds, in: J.~Dick, F.~Kuo, H.~Wo\'zniakowski (Eds.), Contemporary
  computational mathematics---a celebration of the 80th birthday of {I}an
  {S}loan. {V}ol. 1, 2, Springer, Cham, 2018, pp. 517--543.

\bibitem{BPX}
J.~H. Bramble, J.~E. Pasciak, J.~Xu, Parallel multilevel preconditioners,
  Mathematics of Computation 55~(191) (1990) 1--22.

\bibitem{johnLN}
V.~John, Multigrid methods, Tech. rep., (2014). Lecture notes available at
  \url{\scriptsize{https://www.wias-berlin.de/people/john/LEHRE/MULTIGRID/multigrid.pdf}}
  

\bibitem{petsc-user-ref}
S.~Balay, S.~Abhyankar, M.~F. Adams, S.~Benson, J.~Brown, P.~Brune,
  K.~Buschelman, E.~Constantinescu, L.~Dalcin, A.~Dener, V.~Eijkhout,
  J.~Faibussowitsch, W.~D. Gropp, V.~Hapla, T.~Isaac, P.~Jolivet, D.~Karpeev,
  D.~Kaushik, M.~G. Knepley, F.~Kong, S.~Kruger, D.~A. May, L.~C. McInnes,
  R.~T. Mills, L.~Mitchell, T.~Munson, J.~E. Roman, K.~Rupp, P.~Sanan,
  J.~Sarich, B.~F. Smith, S.~Zampini, H.~Zhang, H.~Zhang, J.~Zhang, {PETSc/TAO}
  users manual, Tech. Rep. ANL-21/39 - Revision 3.21, Argonne National
  Laboratory (2024).

\bibitem{AMR}
R.~Abraham, J.~E. Marsden, T.~Ratiu,
  \href{https://doi.org/10.1007/978-1-4612-1029-0}{Manifolds, tensor analysis,
  and applications}, 2nd Edition, Vol.~75 of Applied Mathematical Sciences,
  Springer-Verlag, New York, 1988.

\bibitem{Lee}
J.~M. Lee, Introduction to {R}iemannian manifolds, 2nd Edition, Vol. 176 of
  Graduate Texts in Mathematics, Springer, Cham, 2018.

\bibitem{HNW}
T.~Hangelbroek, F.~J. Narcowich, J.~D. Ward, Kernel approximation on manifolds
  {I}: bounding the {L}ebesgue constant, SIAM J. Math. Anal. 42~(4) (2010)
  1732--1760.

\bibitem{Trieb}
H.~Triebel, Spaces of {B}esov-{H}ardy-{S}obolev type on complete {R}iemannian
  manifolds, Ark. Mat. 24~(2) (1986) 299--337.

\bibitem{HNW-p}
T.~Hangelbroek, F.~J. Narcowich, J.~D. Ward, Polyharmonic and related kernels
  on manifolds: interpolation and approximation, Found. Comput. Math. 12~(5)
  (2012) 625--670.

\bibitem{Taylor}
M.~E. Taylor, Partial differential equations. {I}, Vol. 115 of Applied
  Mathematical Sciences, Springer-Verlag, New York, 1996, basic theory.

\bibitem{HNSW}
T.~Hangelbroek, F.~J. Narcowich, X.~Sun, J.~D. Ward, Kernel approximation on
  manifolds {II}: the {$L_\infty$} norm of the {$L_2$} projector, SIAM J. Math.
  Anal. 43~(2) (2011) 662--684.

\bibitem{collins2021kernel}
P.~Collins, Kernel-based Galerkin methods on compact manifolds without
  boundary, with an emphasis on SO(3), Ph.D. thesis, University of Hawai‘i at
  M\=anoa,   (2021). Thesis available at
  \url{\scriptsize{https://scholarspace.manoa.hawaii.edu/items/936c034f-5039-4ef0-8f03-03f21d71184b}}


\end{thebibliography}

\end{document}